\documentclass{amsart}[14pt,a4paper]
\usepackage[english]{babel}
\usepackage[T1]{fontenc}
\usepackage[cp1252]{inputenc}
\usepackage{geometry}
\usepackage{amsfonts}
\usepackage{hyperref}
\usepackage{amsmath}
\usepackage{galois}
\DeclareMathSizes{10}{10}{7}{5}
\usepackage{mathtools}
\numberwithin{equation}{section}

\usepackage{enumitem}
\usepackage{amssymb}
\usepackage{graphicx}
\usepackage{subfigure}
\usepackage{caption}
\usepackage{epsfig}

\numberwithin{equation}{section}

\usepackage{enumitem}
\usepackage{amssymb}
\usepackage{mathrsfs}

\newtheorem{theorem}{Theorem}[section]
\newtheorem*{theoremA}{Theorem A}
\newtheorem*{theoremB}{Theorem B}

\newtheorem*{remark}{Remark}
\newtheorem{lemma}{Lemma}[section]
\newtheorem{proposition}{Proposition}[section]
\newtheorem{corollary}{Corollary}[section]
\begin{document}
\title{On quadratic Siegel disks with a class of unbounded type rotation numbers}
\author{Hongyu Qu, Jianyong Qiao\textsuperscript{*} and Guangyuan Zhang}
\address{School of Sciences, Beijing University of Posts and Telecommunications, Beijing
100786, P. R. China. \textit{Email:\ hongyuqu2022@126.com}}
\address{School of Sciences, Beijing University of Posts and Telecommunications, Beijing
100786, P. R. China. \textit{Email:} \textit{qjy@bupt.edu.cn}}
\address{Department of Mathematical Sciences, Tsinghua University, Beijing
100084, P. R. China. \textit{Email:} \textit{gyzhang@mail.tsinghua.edu.cn}}
\renewcommand{\thefootnote}{\fnsymbol{footnote}}
\footnotetext[1]{Corresponding author, Email: qjy@bupt.edu.cn}
\footnotetext[2]{The research work was supported by
the National Natural Science Foundation of China under Grants No.12071047 and
the National Natural Science Foundation of China under Grants No.12171264.}
\maketitle

\begin{abstract}
In this paper we explore a class of quadratic polynomials having Siegel disks with unbounded type rotation numbers.
We prove that any boundary point of Siegel disks of these polynomials is a Lebesgue density point of their filled-in Julia sets,
which generalizes the corresponding result of McMullen for bounded type rotation numbers.
As an application, this result can help us construct more quadratic Julia sets with positive area.
Moreover, we also explore the canonical candidate model for quasiconformal surgery of quadratic polynomials with Siegel disks.
We prove that for any irrational rotation number, any boundary point of ``Siegel disk'' of the canonical candidate model
is a Lebesgue density point of its ``filled-in Julia set'',
in particular the critical point $1$ is a measurable deep point of the ``filled-in Julia set''.
\end{abstract}

\section{Introduction}
Let the quadratic polynomial
$$P_{\alpha}(z)=e^{2\pi i\alpha}z+z^2,$$
where $0<\alpha<1$ is an irrational number with continued fraction expansion
$$\alpha=[a_1,a_2,\cdots]=\frac{1}{a_1+\frac{1}{a_2+\ddots}}$$
and the rational approximations
$$\frac{p_n}{q_n}=[a_1,a_2,\cdots,a_n],\ n\geq1.$$
According to \cite{Br} or \cite{Yoc95}, when $\alpha$ is a Brjuno number,
$P_{\alpha}$ has a Siegel disk centering at $0$, denoted by $\Delta_{\alpha}$.
We denote by $\Delta_{\alpha}(r)$ the $r$-neighborhood
of $\Delta_{\alpha}$ for any $r>0$. Set
\[K_r(P_{\alpha}):=\{z\in\Delta_{\alpha}(r):P_{\alpha}^{\comp n}(z)\in\Delta_{\alpha}(r)\ {\rm for}\ {\rm all}\ n\geq0\}.\]
Next we want to explore the Lebesgue density of
$K_r(P_{\alpha})$ near the boundary of $\Delta_{\alpha}$.
When $\alpha$ is of bounded type,
in \cite{Mc} McMullen proved the following result:
\begin{theoremA}[McMullen]
\label{T1.3}If $\alpha$ is of bounded type, then for any $r>0$, every point in $\partial\Delta_{\alpha}$
is a Lebesgue density point of $K_r(P_{\alpha})$.
\end{theoremA}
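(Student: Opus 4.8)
The plan is to deduce the theorem from the stronger assertion that the critical point $c_\alpha:=-e^{2\pi i\alpha}/2$ is a \emph{deep point} of $K_r(P_\alpha)$, and to obtain that in turn from the self-similarity of $\Delta_\alpha$ about $c_\alpha$ furnished by renormalization.

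\emph{Preliminaries and reduction.} For $\alpha$ of bounded type, by well-known results (Douady--Ghys, Herman, \'Swi\k{a}tek, Petersen) the boundary $\partial\Delta_\alpha$ is a quasicircle through $c_\alpha$, $P_\alpha|_{\partial\Delta_\alpha}$ is quasisymmetrically conjugate to $\zeta\mapsto e^{2\pi i\alpha}\zeta$, and $J(P_\alpha)$ is locally connected with zero Lebesgue measure. Recall that $x$ is a deep point of a closed set $K$ if $\operatorname{dist}(y,K)\le C\operatorname{dist}(y,x)^{1+\delta}$ for $y$ near $x$ and some $\delta,C>0$; this forces $\operatorname{area}(B(x,\rho)\setminus K)=O(\rho^{2+\delta})$, so every deep point is a Lebesgue density point. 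Two elementary facts -- $\overline{\Delta_\alpha}\subseteq K_r(P_\alpha)$ and $P_\alpha\big(K_r(P_\alpha)\big)\subseteq K_r(P_\alpha)$ -- together with bounded distortion, show that the deep-point property transfers from $c_\alpha$ forward along its orbit (preserved at non-critical orbit points, surviving the one passage through $c_\alpha$ with $\delta$ merely halved since the local degree there is $2$). Since the renormalization structure below makes the geometry of the pair $\big(K_r(P_\alpha),\partial\Delta_\alpha\big)$ uniformly self-similar along $\partial\Delta_\alpha$ -- every boundary point has a neighborhood carrying, up to bounded distortion, a rescaled copy of the structure at $c_\alpha$ -- and the forward orbit of $c_\alpha$ is dense in $\partial\Delta_\alpha$, it is enough to prove that $c_\alpha$ is a deep point of $K_r(P_\alpha)$.

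\emph{The core estimate.} By the Douady--Ghys surgery the germ of $P_\alpha$ near $\overline{\Delta_\alpha}$ is quasiconformally conjugate to a model built from a degree-$3$ Blaschke product whose restriction to the unit circle has rotation number $\alpha$, so the problem is governed by the renormalization of critical circle maps; for bounded type this renormalization obeys the complex a priori bounds and converges exponentially fast to the renormalization attractor (\'Swi\k{a}tek, de~Faria--de~Melo, McMullen, Yampolsky). Hence there are scales $\rho_n\downarrow0$ with $\rho_{n+1}/\rho_n$ pinched between two constants depending only on the bound for the partial quotients of $\alpha$, and affine rescalings $L_n(z)=c_\alpha+\lambda_n(z-c_\alpha)$ with $|\lambda_n|\asymp\rho_n$, such that on a fixed neighborhood of $c_\alpha$ the sets $L_n^{-1}\big(K_r(P_\alpha)\big)$ converge, exponentially fast in $n$, to a model non-escaping set $\widehat K$. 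The decisive point is that $\widehat K$ is \emph{superlinearly} dense at $c_\alpha$: $\operatorname{dist}(w,\widehat K)=O\big(\operatorname{dist}(w,c_\alpha)^{1+\delta_0}\big)$ for some $\delta_0>0$; equivalently, within $B(c_\alpha,\rho)$ the model leaves room only for complementary gaps of size $O(\rho^{1+\delta_0})$, because escaping a neighborhood of the model Siegel disk costs a definite renormalization depth. Transporting this back through the $L_n$, absorbing the error by the exponential rate of convergence, and filling in the intermediate scales with the pinched ratios $\rho_{n+1}/\rho_n$, we obtain $\operatorname{area}\big(B(c_\alpha,\rho)\setminus K_r(P_\alpha)\big)\le C\rho^{2+\delta}$ for all small $\rho$, which is the required deep-point estimate.

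\emph{Main obstacle.} The entire weight lies in the model estimate: that $\widehat K$ is superlinearly, not merely positively, dense at $c_\alpha$. Bare self-similarity is not enough -- an exactly self-similar picture with non-degenerate complement has density strictly inside $(0,1)$ at $c_\alpha$ -- so one must extract a definite gain from each renormalization step rather than a mere reproduction of the previous one. This requires the precise control of the post-critical set and of the escaping ``fjords'' near $c_\alpha$ supplied by the complex a priori bounds for bounded type; by contrast, the quasicircle property, the vanishing of $\operatorname{area}(J(P_\alpha))$, and the reductions above are routine.
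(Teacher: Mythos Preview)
The paper does not give its own proof of Theorem~A; it is quoted from McMullen \cite{Mc}. The remarks following Theorems~A and~\ref{T1.2} describe McMullen's route: (i) every point of $\partial\Delta_\alpha$ is a \emph{deep point} of $K_r(P_\alpha)$, and (ii) the Julia set $J(P_\alpha)$ is \emph{shallow}; the combination (i)+(ii) yields \emph{measurable} deep, hence Lebesgue density. Your overall strategy---renormalization/self-similarity to get a deep-point estimate at the critical point, then propagate along $\partial\Delta_\alpha$---is indeed McMullen's, and the reductions you sketch are fine.

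There is, however, a genuine gap in the logical spine. You assert that the distance estimate
\[
\operatorname{dist}(y,K)\le C\,\operatorname{dist}(y,x)^{1+\delta}
\]
``forces $\operatorname{area}(B(x,\rho)\setminus K)=O(\rho^{2+\delta})$''. This implication is false without extra input: a closed set $K$ of measure zero can satisfy the deep-point inequality at $x$ (take, e.g., a sufficiently dense union of concentric circles about $x$), yet $B(x,\rho)\setminus K$ then has area $\pi\rho^2$. Equivalently, knowing that each complementary ``gap'' in $B(x,\rho)$ has diameter $O(\rho^{1+\delta_0})$ does not bound the total gap area unless you also control how many gaps there are, i.e., you need a box-dimension/porosity bound on $\partial K$. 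In the present setting $\partial K_r(P_\alpha)\subset J(P_\alpha)$, and McMullen's missing ingredient is exactly the \emph{shallowness} of $J(P_\alpha)$ for bounded type: this gives a uniform upper box-dimension strictly below $2$, so the $O(\rho^{1+\delta})$-neighborhood of $J(P_\alpha)\cap B(x,C\rho)$ has area $O(\rho^{2+\delta'})$, which is what converts ``deep'' into ``measurable deep''.

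Your ``Main obstacle'' paragraph correctly senses that bare self-similarity is insufficient, but misdiagnoses the cure as ``superlinear density of the model'' rather than shallowness of the Julia set. Note also (see the remark after Theorem~\ref{T1.2} and Appendix~B) that for unbounded type $J(F_\alpha)$ is \emph{not} shallow, which is precisely why the present paper abandons McMullen's shallow+deep mechanism and develops the direct area lemmas (Lemmas~\ref{L1} and~\ref{l2}) instead. To repair your argument for bounded type, insert a proof (or citation) that $J(P_\alpha)$ is shallow, and replace the claimed implication ``deep $\Rightarrow$ measurable deep'' by ``deep $+$ shallow boundary $\Rightarrow$ measurable deep''.
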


\begin{remark}
{\rm In fact, in \cite{Mc} McMullen proved that for any $r>0$, every point in $\partial\Delta_{\alpha}$
is a measurable deep point
\footnote[3]{A point $x\in\Lambda$ (compact subset of $\mathbb{C}$) is called \emph{a measurable deep point}
if for some $\delta>0$,
${\rm area}(\mathbb{B}(x,r)\setminus\Lambda)=\mathcal{O}(r^{2+\delta})$
for all $r>0$, where $\mathbb{B}(x,r)$ means a ball centering at $x$ with radius $r$ and
${\rm area}(\mathbb{B}(x,r)\setminus\Lambda)$ means the area of $\mathbb{B}(x,r)\setminus\Lambda$.}
of $K_r(P_{\alpha})$, which gives an order estimate.}
\end{remark}

Theorem A reveals that $K_r(P_{\alpha})$ is very densely distributed near the boundary of $\Delta_{\alpha}$.
The main purpose in this paper is to generalize Theorem A
to a larger class of rotation numbers $\mathcal{E}_0$.

Let $\theta$ be an irrational number between $0$ and $1$. We say $\theta$ satisfies Petersen-Zakeri condition if
$\theta=[a_1,a_2,\cdots]$ with $\log a_n=\mathcal{O}(\sqrt{n})\ {\rm as}\ n\to+\infty$.
All such $\theta$ are denoted by $\mathcal{E}$, that is
$$\mathcal{E}=\{\theta=[a_1,a_2,\cdots]\in\mathbb{R}\setminus\mathbb{Q}:\ \log a_n=\mathcal{O}(\sqrt{n})\ {\rm as}\ n\to+\infty\}.$$
An irrational number $\alpha=[c_1,c_2,\cdots]$ between $0$ and $1$ is said to belong to
$\mathcal{E}_0$ if
there exist $\theta=[a_1,a_2,\cdots]\in\mathcal{E}$, a positive integer $M$ and
two sequences of positive integers $\{s_j\}_{j=1}^{\infty}$ and $\{t_j\}_{j=1}^{\infty}$ such that
\begin{itemize}
\item for all $j\geq1$, $s_j<t_j<s_{j+1}$ and $t_j-s_j>Cs_j$, where $C>0$ is a universal constant and will be fixed in Section \ref{s5};
\item for all $1\leq k\leq s_1$, $c_k\leq a_k$;
\item for all $j\geq1$, $$c_k\left\{\begin{matrix}\leq&M,&s_j<k\leq t_j,\\
\leq&a_{k-t_j},&t_j<k\leq s_{j+1}.\end{matrix}\right.$$
\end{itemize}
It is easy to see that $\mathcal{E}_0$ contains all irrational numbers of bounded type. Furthermore,
when an irrational number with Petersen-Zakeri condition is added many enough uniformly bounded entries, it becomes an element of $\mathcal{S}_0$. Precisely,
given $\theta=[a_1,a_2,\cdots]\in\mathcal{E}$, for any positive integer $M$ and
two sequences of positive integers $\{s_j\}_{j=1}^{\infty}$ and $\{t_j\}_{j=1}^{\infty}$ with
$s_j+t_j<s_{j+1}$ and $t_j>(C+1)s_j$ for all $j\geq1$,
if an irrational number $\alpha=[c_1,c_2,\cdots]$ satisfies that
\begin{itemize}
\item for all $1\leq k\leq s_1$, $c_k=a_k$;
\item for all $j\geq1$, $$c_k\left\{\begin{matrix}\leq&M,&s_j<k\leq s_j+t_j,\\
=&a_{k-\sum_{s=1}^jt_s},&s_j+t_j<k\leq s_{j+1},\end{matrix}\right.$$
\end{itemize}
then we have $\alpha\in\mathcal{E}_0$.
In this paper we will prove the following theorem:
\begin{theorem}
\label{T1}For all $\alpha\in\mathcal{E}_0$ and all $r>0$,
every point in $\partial\Delta_{\alpha}$ is a Lebesgue density point of $K_r(P_{\alpha})$.
\end{theorem}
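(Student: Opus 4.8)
The plan is to carry out McMullen's renormalization argument for Siegel disks of quadratic polynomials, feeding into it the a priori bounds available for Petersen--Zakeri combinatorics in place of the compactness that holds only for bounded type, and then to use the block structure of $\mathcal{E}_0$ to show that the ``gains'' produced by the uniformly bounded blocks $\{s_j<k\le t_j\}$ outweigh the ``losses'' produced by the $\mathcal{E}$-blocks $\{t_j<k\le s_{j+1}\}$.

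\emph{Reduction.} Fix $x\in\partial\Delta_{\alpha}$. As in \cite{Mc}, there is a renormalization of $P_{\alpha}$ near $\partial\Delta_{\alpha}$ whose levels, for $\alpha=[c_1,c_2,\dots]$, realize the shifted rotation numbers $[c_{n+1},c_{n+2},\dots]$ and which is encoded geometrically by a nested sequence of topological disks $V_0\supset V_1\supset\cdots\ni x$ with $\operatorname{diam}V_n\to0$ and uniformly bounded eccentricity (the real a priori bounds for critical circle maps, valid for every irrational rotation number; these and the complex bounds used below are obtained through the standard Blaschke-type model). Because the $V_n$ shrink to $x$ with bounded eccentricity, it suffices to prove that
\[
\beta_n\ :=\ \frac{\operatorname{area}\bigl(V_n\setminus K_r(P_{\alpha})\bigr)}{\operatorname{area}(V_n)}\ \longrightarrow\ 0,
\]
since $V_n\setminus K_r(P_{\alpha})$ is exactly the set of points of $V_n$ whose forward orbit eventually leaves $\Delta_{\alpha}(r)$, and $\beta_n\to0$ is precisely the statement that the complement of $K_r(P_{\alpha})$ has vanishing Lebesgue density at $x$.

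\emph{Dichotomy.} Through the self-similar structure of the renormalization, $\beta_n$ obeys (schematically) a multiplicative recursion whose rate is controlled in two regimes. \textbf{(i) Bounded blocks} $\{s_j<k\le t_j\}$, where $c_k\le M$: here the renormalizations stay in a compact family (real and complex a priori bounds together with McMullen's compactness), and each step yields a definite improvement $\beta_{k+1}\le(1-\eta_0)\beta_k$ with $\eta_0=\eta_0(M)>0$, reflecting the fact that a fixed proportion of $V_k$ always lies on the Siegel side of $\partial\Delta_{\alpha}$ and is therefore contained in $K_r(P_{\alpha})$. \textbf{(ii) $\mathcal{E}$-blocks} $\{t_j<k\le s_{j+1}\}$, where $c_k\le a_{k-t_j}$: the renormalizations need not remain compact, but the quantitative complex bounds for critical circle maps with $\mathcal{E}$-combinatorics (Petersen, Petersen--Zakeri) bound the degradation of the moduli of the relevant annuli by the continued fraction, giving $\beta_{s_{j+1}}\le e^{\mathcal{B}_{L_j}(\theta)}\beta_{t_j}$ with $L_j:=s_{j+1}-t_j$, where $\mathcal{B}_L(\theta)$ is a truncated Brjuno-type sum of $\theta$. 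Since $\theta\in\mathcal{E}$ forces $\log a_m=\mathcal{O}(\sqrt m)$ while the denominators $q_m(\theta)$ grow at least geometrically, the Brjuno sum of $\theta$ converges and $\mathcal{B}_L(\theta)\le\mathcal{B}(\theta)<\infty$ uniformly in $L$.

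\emph{Combination and conclusion.} Telescoping across all blocks up to level $s_{j+1}$ yields
\[
\log\beta_{s_{j+1}}\ \le\ \log\beta_{s_1}\ -\ \eta_0\sum_{i=1}^{j}(t_i-s_i)\ +\ \sum_{i=1}^{j}\mathcal{B}_{L_i}(\theta)\ \le\ \mathrm{const}\ -\ \eta_0\,C\sum_{i=1}^{j}s_i\ +\ j\,\mathcal{B}(\theta),
\]
using $t_i-s_i>Cs_i$. The same hypothesis gives $s_{i+1}>t_i>(C+1)s_i$, so the $s_i$ grow at least geometrically; hence $\sum_{i\le j}s_i$ grows geometrically in $j$ and, once $C$ is taken large enough (this is where the universal constant $C$ is fixed), swamps the linear term $j\,\mathcal{B}(\theta)$. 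Therefore $\log\beta_{s_j}\to-\infty$, and since $\beta_n$ changes by at most a bounded factor between consecutive control levels (by compactness on the bounded part and the complex bounds on the $\mathcal{E}$-part), $\beta_n\to0$ for all $n$; by the Reduction step this is the theorem.

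\emph{Main obstacle.} The crux is the $\mathcal{E}$-block estimate: turning the complex a priori bounds for critical circle maps into a \emph{multiplicative, uniformly bounded} control of $\beta_n$, and in particular showing that an occasional large partial quotient $a_m\approx e^{c\sqrt m}$ --- which forces a large jump of scale between consecutive levels $V_k\supset V_{k+1}$ --- does not let the complement of $K_r(P_{\alpha})$ fill up the resulting fat annulus; this requires the thinness of the post-critical orbit of $P_{\alpha}$ and the expansion away from it, in the spirit of Petersen--Zakeri. A subsidiary point, implicit already for bounded type, is to make the a priori bounds for the renormalization of $P_{\alpha}$ uniform over arbitrarily long initial segments of $\theta$, so that the telescoping is legitimate.
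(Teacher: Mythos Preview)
Your outline has a genuine gap, and the overall route differs from the paper's in a way that matters. The paper does \emph{not} run McMullen's shallow/deep argument on $P_{\alpha}$; in fact the Remark after Theorem~\ref{T1.2} points out that for every unbounded-type $\alpha$ the Julia set of the Blaschke model $F_{\alpha}$ is \emph{not} shallow, so the mechanism behind McMullen's ``definite improvement per renormalization step'' is unavailable here. Your step (i) therefore cannot be justified by ``compactness of the renormalizations'' alone. More seriously, your step (ii) --- the claim $\beta_{s_{j+1}}\le e^{\mathcal{B}_{L_j}(\theta)}\beta_{t_j}$ across an $\mathcal{E}$-block --- is asserted without any mechanism. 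Petersen--Zakeri's complex bounds control the Beltrami coefficient of the conjugacy $\phi_{\alpha}$, not area ratios of the escaping set; there is no known inequality that turns a truncated Brjuno sum into a multiplicative bound on $\beta_n$. Since $\phi_{\alpha}$ is only a David map, it has exponentially bad area distortion in general, so you cannot simply transport McMullen's estimates through it.

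What the paper actually does is work entirely on the model $F_{\alpha}$, where the Siegel boundary is $\mathbb{S}^1$ and the dynamics outside $\overline{\mathbb{D}}$ is holomorphic. Proposition~\ref{pr2.1} (proved via Petersen's puzzle) shows that any escaping orbit starting near $1$ must pass through a controlled sequence of ``pre-escaping steps'' at a quasi-logarithmic sequence of scales. Pulling back suitable hyperbolic balls in $U_0$ along these orbits and invoking a purely measure-theoretic lemma (Lemma~\ref{L1}: if every point of $E$ sees $N$ nested disjoint balls of geometrically decreasing radii, then $\operatorname{area}(E)\le\lambda^N\operatorname{area}(S)$) yields $\operatorname{area}(\mathbb{B}(1,r_j)\setminus K_{\alpha_0}(F_{\alpha}))\preccurlyeq\lambda_2^{s_j}\operatorname{area}(\mathbb{B}(1,r_j))$; here the number $N\asymp s_j$ of available scales is exactly where the block structure of $\mathcal{E}_0$ enters. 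The constant $C$ in the definition of $\mathcal{E}_0$ is fixed not to balance gains against losses in a telescoping sum, but so that the rescaled Beltrami differential $\hat{\mu}_{r_j}$ lies in a \emph{fixed} David class $\mathcal{F}(K_0,K_1,K_2)$ (Proposition~\ref{p1}); this is what permits a conformal factorization $\phi_{\alpha}=\psi_{\alpha,j}\circ\phi_j\circ T_j$ near $1$ and hence a controlled transfer of the area estimate from $F_{\alpha}$ to $P_{\alpha}$. The final spreading from the critical point to an arbitrary $w\in\partial\Delta_{\alpha}$ uses Lemma~\ref{l2}. None of these ingredients appears in your sketch, and the $\mathcal{E}$-block estimate you rely on would itself require a result of comparable depth.
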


The proof of this theorem is based on Petersen and Zakeri's famous results on a canonical candidate model for $P_{\alpha}$ (see \cite{PZ04}).
Next, we recall the canonical candidate model.
Consider the degree $3$ Blaschke product
$$f_{\alpha}:z\mapsto e^{2\pi it(\alpha)}z^2\left(\frac{z-3}{1-3z}\right),$$
which has superattracting fixed points at $0$ and $\infty$ and a double critical point at $z=1$.
Here $0<t(\alpha)<1$ is the unique parameter for which the critical circle map
$f_{\alpha}|_{\mathbb{S}^1}:\mathbb{S}^1\to\mathbb{S}^1$ has rotation number $\alpha$.
By Yoccoz's theorem\cite{Yoc84}, there exists a unique homeomorphism $h:\mathbb{S}^1\to\mathbb{S}^1$ with $h(1)=1$ such that
$h\comp f_{\alpha}|_{\mathbb{S}^1}=R_{\alpha}\comp h$, where $R_{\alpha}(z)=e^{2\pi i\alpha}z$ for all $z\in\mathbb{C}$.
For all $r>0$, we set $\mathbb{D}_r:=\{z\in\mathbb{C}:|z|<r\}$ and in particular, set $\mathbb{D}:=\mathbb{D}_1$.
Let $H:\overline{\mathbb{D}}\to\overline{\mathbb{D}}$ be any homeomorphic extension of $h$ and the canonical candidate model is defined by
$$F_{\alpha}(z):=\left\{\begin{matrix}f_{\alpha}(z)&{\rm if\ |z|\geq1}\ \\
(H^{-1}\comp R_{\alpha}\comp H)(z)&{\rm if\ |z|<1}.\end{matrix}\right.$$
By definition, $F_{\alpha}$ is a degree $2$ topological branched covering of the sphere which
is holomorphic outside of $\overline{D}$ and is topologically conjugate to a rigid rotation on $\overline{\mathbb{D}}$.
About this canonical candidate model, in \cite{Pe} Petersen showed that the ``Julia set'' $J(F_{\alpha})$ of $F_{\alpha}$,
that is the boundary of the set consisting of those points whose orbits are bounded,
is locally connected for every irrational number $\alpha$,
and has measure zero for every $\alpha$ of bounded type. In \cite{Ya},
the measure zero statement was extended by Lyubich and Yampolsky to all irrational number $\alpha$.
Set
\[K_r(F_{\alpha}):=\{z\in\mathbb{D}_{1+r}:F_{\alpha}^{\comp n}(z)\in\mathbb{D}_{1+r}\ {\rm for}\ {\rm all}\ n\geq0\}.\]
About the model $F_{\alpha}$, we obtain the following result:
\begin{theorem}
\label{T1.2}For all irrational number $0<\alpha<1$ and all $r>0$,
every point in $\mathbb{S}^1$ is a Lebesgue density point of $K_r(F_{\alpha})$.
In particular, $1$ is a measurable deep point of $K_r(F_{\alpha})$.
\end{theorem}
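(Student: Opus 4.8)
The plan is to run McMullen's ``measurable deep point'' machinery from Theorem A directly on the model $F_{\alpha}$, where the bounded‑type hypothesis can be dropped because its only role there --- supplying bounded geometry of the relevant nest of domains --- is now played by the complex \emph{a priori} bounds for critical circle maps, valid for \emph{every} irrational rotation number (cf.\ \cite{Pe,Ya}). This is why the statement is unrestricted in $\alpha$, in contrast with Theorem \ref{T1}.

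\textbf{Step 0: reduction to the holomorphic side.} Since $F_{\alpha}|_{\overline{\mathbb{D}}}$ is topologically conjugate to $R_{\alpha}$, it preserves $\overline{\mathbb{D}}$, so $\overline{\mathbb{D}}\subseteq K_r(F_{\alpha})$ for every $r>0$. Hence for $\zeta\in\mathbb{S}^1$ and $\rho>0$ small,
$$\mathrm{area}\bigl(\mathbb{B}(\zeta,\rho)\setminus K_r(F_{\alpha})\bigr)=\mathrm{area}\bigl(\mathbb{B}(\zeta,\rho)\cap\{|z|>1\}\setminus K_r(F_{\alpha})\bigr),$$
and the set on the right consists of points whose orbit --- governed by the \emph{holomorphic} map $f_{\alpha}$ as long as it stays in $\{|z|\ge1\}$ --- reaches $\{|z|\ge1+r\}$ before ever entering $\mathbb{D}$ (after which it is trapped in $\mathbb{D}\subseteq\mathbb{D}_{1+r}$). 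So the whole problem becomes a quantitative statement about a holomorphically defined ``escaping'' set of $f_{\alpha}$ accumulating on $\mathbb{S}^1$, and the fact that $F_{\alpha}$ is merely quasiconformal on $\mathbb{D}$ never enters, since $\mathbb{D}$ only ever contributes area lying in $K_r(F_{\alpha})$.

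\textbf{Step 1: a bounded‑geometry nest at the critical point.} I would then use Petersen's puzzle for $f_{\alpha}$ (equipotentials and external rays of the basin of $\infty$, cut along the first‑return combinatorics of the orbit of $1$ on $\mathbb{S}^1$) to produce a nested sequence of puzzle pieces $P_0\supset P_1\supset\cdots$ with $\{1\}=\bigcap_n P_n$, each a topological disk that is an honest neighbourhood of an arc of $\mathbb{S}^1$. The complex \emph{a priori} bounds then give, uniformly in $\alpha$: (i) uniform bounded shape of the $P_n$ (moduli of surrounding annuli, roundness), and $\mathrm{diam}\,P_{n+1}\le\lambda\,\mathrm{diam}\,P_n$ for a universal $\lambda<1$ along the renormalization levels; (ii) for each $n$ an iterate $f_{\alpha}^{\circ m(n)}$ carrying $P_{n+1}$ onto a fixed piece $P_{n_0}$ as a branched cover of uniformly bounded degree and uniformly bounded distortion, taking escaping points to escaping points; (iii) $\mathrm{area}(P_n\cap\mathbb{D})\ge c\cdot\mathrm{area}(P_n)$, a chunk that lies entirely in $K_r(F_{\alpha})$. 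One must also check, again from the bounds, that the escaping set meets each $P_n$ nondegenerately, so that the renewal estimate below is not vacuous.

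\textbf{Step 2: self‑improvement, order estimate, and all boundary points.} Exactly as in the proofs that $J(F_{\alpha})$ has zero area \cite{Pe,Ya}, the non‑density is self‑improving: pulling back through a bounded number of renormalization steps via (ii), the relative area $\varepsilon_n$ of the escaping set in $P_n$ is bounded by that in $P_{n_0}$ times a factor $\kappa<1$ coming from the definite trapped sub‑chunk of (iii), so $\varepsilon_n\to0$ geometrically; interpolating between consecutive scales using the geometric decay $\mathrm{diam}\,P_n\asymp\lambda^n$ yields $\mathrm{area}(\mathbb{B}(1,\rho)\setminus K_r(F_{\alpha}))=\mathcal{O}(\rho^{2+\delta})$ with $\delta\asymp\log(1/\kappa)/\log(1/\lambda)$, i.e.\ McMullen's deep‑point criterion applies and $1$ is a measurable deep point. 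For a general $\zeta\in\mathbb{S}^1$ one repeats the argument with the puzzle piece of each level containing $\zeta$, which is dynamically related to a piece near $1$ because the orbit of $\zeta$ under the minimal circle map $f_{\alpha}|_{\mathbb{S}^1}$ enters every neighbourhood of $1$; this still forces Lebesgue density $1$ of $K_r(F_{\alpha})$ at $\zeta$ (the transfer map acquires a critical point only when $\zeta$ lies on the grand orbit of $1$, and even then the density‑point property survives, possibly with a worse exponent, which we claim only at $1$).

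\textbf{Main obstacle.} Everything hinges on Step 1: arranging the nest so that bounded shape, the geometric decay $\mathrm{diam}\,P_{n+1}\lesssim\lambda\,\mathrm{diam}\,P_n$, and bounded‑distortion return dynamics all hold \emph{uniformly over all irrational $\alpha$}. This is precisely where Yampolsky's complex \emph{a priori} bounds must replace the bounded‑type estimates of Theorem A, and the delicate point is that for unbounded type individual combinatorial steps can be arbitrarily long --- the contraction is supplied not by single steps but by the infinitely many renormalization levels around the combinatorially distinguished point $1$. A secondary technical point is verifying (iii) and the nondegeneracy of the escaping set inside the pieces, which again follows from the bounds.
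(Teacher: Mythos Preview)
Your Step~2 contains a genuine gap: the claimed contraction $\varepsilon_{n}\le\kappa\,\varepsilon_{n_0}$ with $\kappa<1$ is not justified by the ingredients you list. A bounded-distortion first return $\phi:P_{n+1}\to P_{n_0}$ carrying escaping points to escaping points gives only $\varepsilon_{n+1}\le C\,\varepsilon_{n_0}$ with a distortion constant $C>1$, while the ``definite trapped chunk'' (iii) gives only the uniform bound $\varepsilon_n\le 1-c$. Neither of these, nor their combination, bootstraps to geometric decay of $\varepsilon_n$. What you are implicitly invoking is McMullen's mechanism: near every escaping point and at \emph{every} scale there is a ball of comparable size in the complement of the Julia set, i.e.\ $J(F_\alpha)$ is shallow. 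But the paper proves in Appendix~B that for every irrational $\alpha$ of unbounded type, $J(F_\alpha)$ is \emph{not} shallow, and the Remark after Theorem~\ref{T1.2} states explicitly that McMullen's route is therefore blocked. Concretely, for large $a_{n+1}$ the puzzle at level $n$ contains long thin ``parabolic-like'' channels in which no ball of definite relative size lies in a single Fatou component; the zero-area proofs of \cite{Pe,Ya} survive this because they only need density strictly below $1$, not the quantitative $\mathcal{O}(\rho^{2+\delta})$ you are after.

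The paper's actual argument replaces the self-similar renewal by a pointwise, multi-scale mechanism. Proposition~\ref{pr2.1} (proved via Petersen's puzzle and the \'Swi\k{a}tek--Herman bounds) shows that the orbit of any escaping point $z\in S_{l_n}$, before leaving $\mathbb{D}_{1+\alpha_0}$, passes within bounded hyperbolic distance (in $\mathbb{C}\setminus\overline{\mathbb{D}}$) of each of the reference points $z_t\in\overline{U_0}$ for $t=1,\dots,n-1$, where $\{l_t\}$ is quasi-log-arithmetic. One then places hyperbolic balls $\hat{\mathbb{B}}_t\subset U_0$ near each $z_t$ and pulls them back along the orbit of $z$; hyperbolic contraction (Lemma~\ref{l6}) forces these pullbacks to form a sequence of Euclidean balls near $z$, disjoint from the escaping set, with geometrically decreasing radii and centers within a fixed multiple of their radii from $z$. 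This produces, for each escaping $z$, $\asymp n$ ``obstacle'' balls at $n$ different scales. The conversion of this pointwise structure into an exponential area bound is done not by renewal but by the bespoke covering Lemma~\ref{L1}, which is the real replacement for shallowness. Lebesgue density at a general $\zeta\in\mathbb{S}^1$ then follows from the deep-point estimate at $1$ by a transfer argument using Lemma~\ref{l2}. The missing idea in your proposal is precisely this ``many independent obstacles along a single escaping orbit $+$ Lemma~\ref{L1}'' package.
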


\begin{remark}
{\rm In \cite{Mc}, when $\theta$ is of bounded type,
McMullen proved that the Julia set $J(P_{\theta})$ is a shallow set and any point in $\partial\Delta_{\theta}$ is a deep point of $K_r(P_{\theta})$,
which implies that any point in $\partial\Delta_{\theta}$ is a measurable deep point
of $K_r(P_{\theta})$,
and hence a Lebesgue density point of $K_r(P_{\theta})$.
But the proof of this theorem can't follow this way,
for it can be proved that for any unbounded type irrational number $\alpha$,  $J(F_{\alpha})$ isn't a shallow set.
See Appendix B for the discussion of non-shallowness of $J(F_{\alpha})$.}
\end{remark}

Existence of nowhere dense rational Julia set with positive area was a famous open problem until
Buff and Ch\'eritat constructed quadratic Julia sets with positive area. In their famous work\cite{BC12},
the following result is a key step to construct quadratic Julia sets with positive area:
\begin{theoremB}[Buff and Ch\'eritat]
If $N$ is sufficiently large, then the following holds.
Assume $\alpha=[a_1,a_2,\cdots]$ is of bounded type and choose a sequence $\{A_n\}_{n=1}^{+\infty}$ of positive integers such that
\[\sqrt[q_n]{A_n}\xrightarrow[n\to+\infty]{}+\infty\ {\rm and}\ \sqrt[q_n]{\log A_n}\xrightarrow[n\to+\infty]{}1.\]
Set
\[\alpha_n:=[a_1,a_2,\cdots,a_n,A_n,N,N,\cdots].\]
Then, for all $\epsilon>0$, if $n$ is sufficiently large,
\begin{itemize}
\item $P_{\alpha_n}$ has a cycle in $D(0,\epsilon)\setminus\{0\}$,
\item ${\rm area}(K(P_{\alpha_n}))\geq(1-\epsilon)\cdot{\rm area}(K(P_{\alpha})),$
\end{itemize}
where $K(P_{\alpha})$ $($resp. $K(P_{\alpha_n})$$)$ means the filled-in Julia set of $P_{\alpha}$ $($resp. $P_{\alpha_n}$$)$
with its Lebesgue measure ${\rm area}(K(P_{\alpha}))$ $($resp. ${\rm area}(K(P_{\alpha_n}))$$)$.
\end{theoremB}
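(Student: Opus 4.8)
The plan is to treat $P_{\alpha_n}$ as a near-parabolic perturbation of the parabolic polynomial $P_{p_n/q_n}$ and to obtain the two conclusions from two essentially separate mechanisms: \emph{parabolic implosion} produces the small cycle, while \emph{near-parabolic renormalization} (in the Inou--Shishikura form), fed by McMullen's Theorem A, controls the area. For the setup, observe first that $\alpha_n\to\alpha$, so $P_{\alpha_n}\to P_\alpha$ locally uniformly, and that $p_n/q_n=[a_1,\dots,a_n]$ is an exceedingly good rational approximant of $\alpha_n$, with $|\alpha_n-p_n/q_n|$ of order $1/(q_n q_{n+1})\asymp 1/(q_n^2 A_n)$, because the $(n+1)$-st partial quotient of $\alpha_n$ equals the huge integer $A_n$ and all later ones equal the large constant $N$. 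Observe also that, $\alpha$ being of bounded type, $J(P_\alpha)$ is null, so $K(P_\alpha)$ has positive area (it contains $\Delta_\alpha$) and, up to a null set, $K(P_\alpha)=X_\alpha:=\bigcup_{k\ge0}P_\alpha^{-k}(\Delta_\alpha)$; moreover $\partial\Delta_\alpha$ is a quasicircle through the critical point $c_\alpha=-e^{2\pi i\alpha}/2$, so Theorem A, in the order form of the Remark, applies at $c_\alpha$: ${\rm area}(\mathbb{B}(c_\alpha,\rho)\setminus K_r(P_\alpha))=\mathcal{O}(\rho^{2+\delta})$.

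For the first bullet I would run parabolic implosion at $p_n/q_n$. For each $n$, $0$ is a parabolic fixed point of $P_{p_n/q_n}$ with multiplier a primitive $q_n$-th root of unity, so $P_{p_n/q_n}^{q_n}$ has the normal form $z\mapsto z+c\,z^{q_n+1}+\cdots$ near $0$ (with $c\ne0$). Perturbing the rotation number from $p_n/q_n$ to $\alpha_n$ splits this degenerate fixed point of the $q_n$-th iterate into $0$ together with a genuine cycle of $P_{\alpha_n}$ of period $q_n$, and solving $(\mu-1)+c\,z^{q_n}=0$ with $|\mu-1|\asymp|q_n\alpha_n-p_n|\asymp 1/q_{n+1}$ locates that cycle at distance of order $(q_n q_{n+1})^{-1/q_n}$ from $0$. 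Since $q_n^{1/q_n}\to1$, this tends to $0$ exactly when $\sqrt[q_n]{A_n}\to+\infty$, so for $n$ large the cycle lies in $D(0,\epsilon)\setminus\{0\}$. The complementary hypothesis $\sqrt[q_n]{\log A_n}\to1$ bounds $A_n$ from above just enough to keep $\alpha_n$, and hence $P_{\alpha_n}$, inside the range for which the near-parabolic renormalization tower invoked below remains valid.

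The second bullet is the heart of the argument. Fix a small $\rho>0$ and delete from $X_\alpha$ the whole orbit of $\mathbb{B}(c_\alpha,\rho)$, setting $Y_\rho:=X_\alpha\setminus\bigcup_{k\ge0}P_\alpha^{-k}(\mathbb{B}(c_\alpha,\rho))$. Pulling the McMullen bound ${\rm area}(\mathbb{B}(c_\alpha,\rho)\setminus K_r(P_\alpha))=\mathcal{O}(\rho^{2+\delta})$ back along the finitely many relevant, boundedly distorting branches of the $P_\alpha^{-k}$ --- which, since $P_\alpha$ expands on $\mathbb{C}\setminus\overline{\Delta_\alpha}$ and $J(P_\alpha)$ is null, do not accumulate with unbounded multiplicity --- gives ${\rm area}(X_\alpha\setminus Y_\rho)\to0$ as $\rho\to0$. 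The substantive claim is then: for each fixed $\rho$ there is $n(\rho)$ with $Y_\rho\subseteq K(P_{\alpha_n})$ for all $n\ge n(\rho)$. Granting it, choose $\rho$ with ${\rm area}(X_\alpha\setminus Y_\rho)<\epsilon\cdot{\rm area}(K(P_\alpha))$ and then any $n\ge n(\rho)$; monotonicity of area gives ${\rm area}(K(P_{\alpha_n}))\ge{\rm area}(Y_\rho)\ge(1-\epsilon)\,{\rm area}(K(P_\alpha))$. To establish the inclusion one follows forward orbits under $P_{\alpha_n}$: off the $P_\alpha$-orbit of $\mathbb{B}(c_\alpha,\rho)$ the relevant dynamics of $P_{\alpha_n}$ is, after the near-parabolic change of coordinates, a small perturbation of that of $P_\alpha$, and one checks in the perturbed Fatou-coordinate tower that a point of $Y_\rho$ lands, at every renormalization level, in the non-escaping part, so its full $P_{\alpha_n}$-orbit stays bounded.

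The step I expect to be the main obstacle is precisely this inclusion. The difficulty is that the Siegel disk is \emph{not} stable under $\alpha\mapsto\alpha_n$ --- indeed $\Delta_{\alpha_n}$ has radius tending to $0$ --- so one cannot argue by continuity of Fatou components and must replace ``lies in a rotation domain'' by the weaker ``has bounded orbit,'' and verify the latter uniformly in $n$. This requires the full strength of the Inou--Shishikura invariant class and its a priori bounds, and it is here that the \emph{quantitative} deep-point content of Theorem A is genuinely used: one needs to know not merely that $K_r(P_\alpha)$ is dense near $c_\alpha$, but how deep inside $X_\alpha$ --- at which scale relative to the renormalization --- a point must sit to be trapped. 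The secondary technical point, needed so that the discarded area really tends to $0$ rather than merely staying bounded, is the distortion and multiplicity control on the infinitely many sets $P_\alpha^{-k}(\mathbb{B}(c_\alpha,\rho))$, which follows from the expansion of $P_\alpha$ on $\mathbb{C}\setminus\overline{\Delta_\alpha}$ together with ${\rm area}(J(P_\alpha))=0$.
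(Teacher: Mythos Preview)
The paper does not contain a proof of Theorem B. It is quoted from Buff and Ch\'eritat \cite{BC12} as a known result, used only as motivation; the paper's own contribution is the generalization Theorem~\ref{T2}, which the authors assert follows from their Theorem~\ref{T1} ``by Buff and Ch\'eritat's method'' without reproducing that method. So there is no proof here to compare your proposal against.

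That said, your outline is broadly faithful to the Buff--Ch\'eritat strategy: parabolic explosion for the small cycle, and Inou--Shishikura near-parabolic renormalization combined with McMullen's density result for the area estimate. You have also correctly located the crux --- the Siegel disk is unstable under $\alpha\mapsto\alpha_n$, so one cannot argue by continuity of Fatou components and must instead trap orbits in the renormalization tower. Two points where your sketch drifts from the actual argument: first, the area loss is not organized as ${\rm area}(X_\alpha\setminus Y_\rho)$ with $Y_\rho$ obtained by deleting the full inverse orbit of a ball around $c_\alpha$ --- that union has infinite multiplicity issues you gloss over, and your appeal to ``expansion on $\mathbb{C}\setminus\overline{\Delta_\alpha}$'' is not available in the form you need. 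Buff--Ch\'eritat instead compare $\Delta_{\alpha_n}$ to $\Delta_\alpha$ via an explicit control of the loss of conformal radius and use Theorem~A to show that the sliver $\Delta_\alpha\setminus$(a slightly shrunken copy) that may escape has small area; this is then propagated to all of $K(P_\alpha)$ by a single Koebe-type pullback, not an infinite union. Second, the hypothesis $\sqrt[q_n]{\log A_n}\to1$ is not there to keep $\alpha_n$ ``inside the Inou--Shishikura class'' (the tail $[N,N,\ldots]$ already does that) but to control the number of near-parabolic renormalization levels relative to $q_n$, which governs how much area can leak at each level.
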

In their paper, Buff and Ch\'eritat wrote that they thought that Theorem B holds for more general sequences $\{\alpha_n\}_{n=1}^{+\infty}$.
In \cite{QQ}, the first two authors proved that the condition $\sqrt[q_n]{\log A_n}\xrightarrow[n\to+\infty]{}1$ in Theorem B is unnecessary.
It follows from [Theorem E, \cite{Ch}] that Theorem B still holds
if all entries after $A_n$ in $\alpha_n$ are replaced by all entries of a Brjuno number of high type.
Now together with the above results, by Buff and Ch\'eritat's method
Theorem \ref{T1} immediately give us a further generalization of Theorem B as follows:
\begin{theorem}
\label{T2}If $N$ is sufficiently large, then the following holds.
Assume $\alpha=[a_1,a_2,\cdots]\in\mathcal{E}_0$ and
$\theta=[b_1,b_2,\cdots]$ with $b_j\geq N$, $\forall j\geq1$ is a Brjuno number.
Let $\{A_n\}_{n=1}^{+\infty}$ be a sequence of positive integers such that
\[\sqrt[q_n]{A_n}\xrightarrow[n\to+\infty]{}+\infty.\]
Set
\[\alpha_n:=[a_1,a_2,\cdots,a_n,A_n,b_1,b_2,\cdots].\]
Then, for all $\epsilon>0$, if $n$ is sufficiently large,
\begin{itemize}
\item $P_{\alpha_n}$ has a cycle in $D(0,\epsilon)\setminus\{0\}$,
\item ${\rm area}(K(P_{\alpha_n}))\geq(1-\epsilon)\cdot{\rm area}(K(P_{\alpha})).$
\end{itemize}
\end{theorem}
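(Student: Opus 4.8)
The plan is to run Buff and Ch\'eritat's positive-area construction for a single step at a rotation number $\alpha\in\mathcal{E}_0$, inserting Theorem \ref{T1} at the one place where their argument invokes bounded type. Two preliminary observations make the substitution legitimate. First, $\mathcal{E}_0\subseteq\mathcal{E}$: by construction the $k$-th partial quotient of $\alpha\in\mathcal{E}_0$ is bounded either by the constant $M$ or by a partial quotient, of index at most $k$, of a fixed element of $\mathcal{E}$ witnessing $\alpha\in\mathcal{E}_0$, so it is $e^{\mathcal{O}(\sqrt k)}$; consequently $P_\alpha$ is a Brjuno quadratic with the full Petersen--Zakeri regularity---$J(P_\alpha)$ locally connected and of zero area (so $\mathrm{area}(K(P_\alpha))=\mathrm{area}(\Delta_\alpha)$), and $\partial\Delta_\alpha$ a Jordan curve of zero area---which are exactly the properties of $P_\alpha$, other than the density statement, on which \cite{BC12} relies. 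Second, $\alpha_n=[a_1,\dots,a_n,A_n,b_1,b_2,\dots]$ is again a Brjuno number (it has the Brjuno tail $\theta$), so $P_{\alpha_n}$ has a Siegel disk and ``a cycle in $D(0,\epsilon)\setminus\{0\}$'' asks for a genuine periodic orbit distinct from the centre $0$.

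First I would produce the cycle. Since the partial quotient $A_n$ of $\alpha_n$ in position $n+1$ is huge, $\alpha_n$ lies within distance $\asymp 1/(q_n^2A_n)$ of $p_n/q_n$, so $P_{\alpha_n}$ is a tiny perturbation of the parabolic polynomial $P_{p_n/q_n}$, whose fixed point $0$ carries a Leau--Fatou flower with $q_n$ petals. The parabolic-explosion estimates of \cite{BC12}---in the high-type Brjuno-tail form of [Theorem E, \cite{Ch}]---then produce, for the perturbed map, a cycle of period $q_n$ near $0$; the hypothesis $\sqrt[q_n]{A_n}\to+\infty$ is exactly the condition forcing the perturbation to be small enough for this cycle to lie in $D(0,\epsilon)\setminus\{0\}$ once $n$ is large, and [Theorem E, \cite{Ch}] already allows the constant tail of Theorem B to be replaced by an arbitrary Brjuno number of high type, i.e. $\theta=[b_1,b_2,\dots]$ with all $b_j\geq N$. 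This step depends on $\alpha$ only through the existence of, and a priori bounds on, $\Delta_\alpha$, so it transfers from bounded type to $\mathcal{E}_0$ without change; I would merely reexamine \cite{Ch} to confirm that no estimate there quietly uses bounded type of $a_1,\dots,a_n$.

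Next comes the area bound $\mathrm{area}(K(P_{\alpha_n}))\geq(1-\epsilon)\,\mathrm{area}(K(P_\alpha))$, which is the substantive part. Following \cite{BC12}, one compares $K(P_{\alpha_n})$ with $K(P_\alpha)$ by a near-parabolic renormalization together with a quasiconformal surgery that transports a large-measure piece of $K(P_\alpha)$---concretely $K_r(P_\alpha)$ for a suitable $r$, a set with the same area as $K(P_\alpha)$ containing $\overline{\Delta_\alpha}$ and its collar---into $K(P_{\alpha_n})$, the transport being conformal, hence area-preserving, outside a neighbourhood of $\partial\Delta_\alpha$ whose scale is controlled by the surgery. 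Bounded type enters \cite{BC12} only to bound the measure of the part of $K_r(P_\alpha)$ on which the transport fails to be conformal, and it does so through McMullen's Theorem A: every point of $\partial\Delta_\alpha$ being a Lebesgue density point of $K_r(P_\alpha)$ keeps that part negligible, so the lost fraction of area tends to $0$. Substituting Theorem \ref{T1} for Theorem A---Theorem \ref{T1} asserting exactly that, for $\alpha\in\mathcal{E}_0$ and every $r>0$, every point of $\partial\Delta_\alpha$ is a Lebesgue density point of $K_r(P_\alpha)$---together with the refinement of \cite{QQ} that the condition $\sqrt[q_n]{\log A_n}\to 1$ of Theorem B is superfluous, gives the bound for $n$ large, and with it both assertions of Theorem \ref{T2}.

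The main obstacle is this last substitution. Buff and Ch\'eritat work with McMullen's \emph{quantitative} ``measurable deep point / $\mathcal{O}(r^{2+\delta})$'' form of Theorem A, whereas Theorem \ref{T1} supplies only the \emph{qualitative} density statement for $\mathcal{E}_0$. I expect qualitative density to be enough for a single step: the quantitative form is needed only to keep an infinite product $\prod_k(1-\epsilon_k)$ of step-losses bounded away from $0$ when the construction is iterated, while for one perturbation full density of $K_r(P_\alpha)$ along the compact curve $\partial\Delta_\alpha$ already forces the measure of $K_r(P_\alpha)$ in the relevant shrinking neighbourhoods of $\partial\Delta_\alpha$ to tend to $0$, hence $\epsilon=\epsilon_n\to 0$ as $n\to\infty$. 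Making this precise means tracking, in the surgery of \cite{BC12,QQ,Ch}, exactly which neighbourhood of $\partial\Delta_\alpha$ carries the non-conformal part and with what dilatation, and checking that mere density of $K_r(P_\alpha)$ there suffices; this is the single point demanding genuine verification rather than citation.
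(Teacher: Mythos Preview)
Your proposal is correct and matches the paper's approach exactly: the paper gives no detailed proof of Theorem~\ref{T2} at all, stating only that it follows ``immediately'' from Theorem~\ref{T1} together with Buff--Ch\'eritat's method, the refinement of \cite{QQ} dropping the $\sqrt[q_n]{\log A_n}\to 1$ condition, and [Theorem~E, \cite{Ch}] allowing a high-type Brjuno tail. Your write-up is in fact more careful than the paper's, since you flag the qualitative-versus-quantitative density issue (Theorem~\ref{T1} gives only Lebesgue density, not McMullen's measurable-deep-point estimate) as the one point requiring genuine verification---a caveat the paper does not make explicit.
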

It is easy to see that with the help of Theorem \ref{T2}, by Buff and Ch\'eritat's method
one can construct more quadratic Julia sets with positive area.

\section{Orbits of escaping points for the model $F_{\alpha}$}

\subsection{Basic notations\label{s2.1}}
Given an irrational number $0<\alpha<1$, the canonical candidate model
$F_{\alpha}$ is a degree $2$ topological branched covering of the complex plane
which is holomorphic outside of $\overline{D}$ and
is topologically conjugate to a rigid rotation with rotation number $\alpha$ on $\overline{\mathbb{D}}$.
Here we give some notations throughout this paper.
\begin{itemize}
\item[(1)] We denote by $U_0$ the unique component of $F_{\alpha}^{-1}(\mathbb{D})\setminus\mathbb{D}$.
Then $U_0$ is a Jordan domain whose boundary is $F_{\alpha}^{-1}(\mathbb{S}^1)\setminus(\mathbb{S}^1\setminus\{1\})$
and the intersection between $\overline{U_0}$ and $\overline{\mathbb{D}}$ is $\{1\}$.
The angle between the boundary of $U_0$ and $\mathbb{S}^1$ at $1$ is $\frac{\pi}{3}$.

\item[(2)] For any integer $j$, we let $x_j$ be a (necessarily unique) point in $\mathbb{S}^1$ with $F_{\alpha}^{\comp j}(x_j)=1$ and
$x_{0,j-1}$ be a (necessarily unique) point in $\partial U_0$ with $F_{\alpha}(x_{0,j-1})=F_{\alpha}(x_j)$.

\item[(3)] By ${\rm diam}(\cdot)$ and $d(\cdot,\cdot)$ we mean Euclidean diameter and Euclidean distance respectively.
Given $a\in\mathbb{C}$ and $r>0$, we set $\mathbb{B}(a,r):=\{z\in\mathbb{C}:|z-a|<r\}$.

\item[(4)] For any hyperbolic Riemann surface $\mathcal{D}$, by ${\rm diam}_{\mathcal{D}}(\cdot)$ and $d_{\mathcal{D}}(\cdot,\cdot)$
we mean hyperbolic diameter and hyperbolic distance in $\mathcal{D}$ respectively. Moreover,
if $a\in\mathcal{D}$ and $r>0$, we set $\mathbb{B}_{\mathcal{D}}(a,r):=\{z\in\mathcal{D}:d_{\mathcal{D}}(a,z)<r\}$.

\item[(5)] For any three real numbers $a>0, b>0, c>1$, the notation $a\preccurlyeq_c b$ means $a\leq c\cdot b$;
the notation $a\asymp_c b$ means $a\preccurlyeq_c b$ and $b\preccurlyeq_c a$;
the notation $a\preccurlyeq b$ means that
there exists a universal constant $c'>1$ such that $a\preccurlyeq_{c'}b$,
here by a universal constant we mean one which is independent of all the parameters/variables involved;
the notation $a\asymp b$ means $a\preccurlyeq b$ and $b\preccurlyeq a$.
In particular, if both $a(x)$ and $b(x)$ depend on a variable $x$ and $I$ denotes a range of $x$,
for any nonempty subset $I' \subset I$,
``for any/all $x\in I'$, $a(x)\preccurlyeq b(x)$''
means that the universal constant $c'$ does't depend also on $x$ in $I'$.
When $I'$ denotes all large enough $x$, we immediately say
``for large enough $x$, $a(x)\preccurlyeq b(x)$''.
\item[(6)] Given a decreasing sequence $\{r_n\}_{n=1}^{+\infty}$ of positive real numbers,
we call $\{r_n\}_{n=1}^{+\infty}$ \emph{a quasi-log-arithmetic sequence} if
$\lim_{n\to\infty}r_n=0$ and $\log(\frac{r_n}{r_{n+1}})\asymp1$.
\end{itemize}

\subsection{Orbits of escaping points}
Fixing a positive real number $\alpha_0$ and an irrational number $0<\alpha<1$,
we define a map $\Lambda_{\alpha_0}:\mathbb{D}_{1+\alpha_0}\to[0,+\infty]$ as the following:
\begin{itemize}
\item $\Lambda_{\alpha_0}(z)=|F_{\alpha}^{\comp m}(z)-1|$ if $F_{\alpha}^{\comp m}(z)\in U_0$ for some $m\geq0$ and
$F_{\alpha}^{\comp j}(z)\in\mathbb{D}_{1+\alpha_0}$ for all $0\leq j\leq m$;
\item $\Lambda_{\alpha_0}(z)=+\infty$ if $z\not\in K_{\alpha_0}(F_{\alpha})$;
\item $\Lambda_{\alpha_0}(z)=0$ otherwise.
\end{itemize}
A point $z\in\mathbb{D}_{1+\alpha_0}$ is called \emph{an escaping point} for $F_{\alpha}$ if $\Lambda_{\alpha_0}(z)=+\infty$.
In this section, our main purpose is to show that the orbit of any escaping point near the critical point $1$
has a ``good'' behavior of recurrence before leaving the disk $\mathbb{D}_{1+\alpha_0}$.
This phenomenon is formulated as the following proposition:
\begin{proposition}
\label{pr2.1}Given $F_{\alpha}$ with any irrational number $\alpha$,
there exists a sequence $\{z_n\}_{n=1}^{+\infty}\subset\overline{U_0}$ with
a quasi-log-arithmetic sequence $\left\{l_n:=|z_n-1|\right\}_{n=1}^{\infty}$ such that
for any $N>0$ and $n>0$,
if $z\in S_{l_{n+N}}$ with $\Lambda_{\alpha_0}(z)>l_n$, then there exists a nonnegative integer $m$ such that
$$\{F_{\alpha}^{\comp j}(z)\}_{j=0}^m\subseteq\mathbb{D}_{1+\alpha_0}\ {\rm and}\
d_{\mathbb{C}\setminus\overline{\mathbb{D}}}(F_{\alpha}^{\comp m}(z),z_n)\preccurlyeq1,$$
where $S_l=\{e^{-iz}:-l\leq{\rm Re}(z)\leq l\ {\rm and}\ -l\leq{\rm Im}(z)\leq l\}$ for all $l>0$.
\end{proposition}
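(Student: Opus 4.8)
The plan is to read the orbit of an escaping point off the local picture of $F_{\alpha}$ at the critical point $1$ described in Section~\ref{s2.1}. Near $1$ a punctured neighbourhood is cut by $\mathbb{S}^1$ and $\partial U_0$ into the disk side $\mathbb{D}$ (opening $\pi$), the sector $U_0$ (opening $\frac{\pi}{3}$), and two ``gap'' sectors of opening $\frac{\pi}{3}$ that lie in $\mathbb{C}\setminus\overline{\mathbb{D}}$. Two elementary facts drive the argument: (i) since $F_{\alpha}^{-1}(\mathbb{D})=\mathbb{D}\sqcup U_0$ and $F_{\alpha}|_{\mathbb{D}}$ is conjugate to the rigid rotation, an orbit can leave $\mathbb{C}\setminus\overline{\mathbb{D}}$ only by first passing through $U_0$ and then being trapped in $\mathbb{D}$ forever, so an escaping orbit stays in $\mathbb{C}\setminus\overline{\mathbb{D}}$ for all time, and an arbitrary orbit stays there until its first visit to $U_0$; and (ii) on $\mathbb{C}\setminus\overline{\mathbb{D}}$ the map $F_{\alpha}$ equals the Blaschke product $f_{\alpha}$, which carries $\mathbb{C}\setminus\overline{\mathbb{D}}$ into itself, so the Schwarz--Pick inequality and the Koebe distortion theorem are available to control how the orbit moves in the $d_{\mathbb{C}\setminus\overline{\mathbb{D}}}$-metric away from the critical point (at $1$ itself the action triples angles and takes the orbit ``deeper'', and that must be tracked separately).

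Next I would construct $\{z_n\}$ and $\{l_n\}$ from the combinatorics of the orbit of $1$ for the critical circle map $f_{\alpha}|_{\mathbb{S}^1}$. Its closest returns occur at the times $q_m$, and the boundary points $x_{0,q_m-1}\in\partial U_0$ of Section~\ref{s2.1} satisfy $x_{0,q_m-1}\to1$ along $\partial U_0$, because $F_{\alpha}(x_{0,q_m-1})=F_{\alpha}(x_{q_m})\to F_{\alpha}(1)$ as $x_{q_m}\to1$ and $F_{\alpha}|_{\partial U_0}$ is a homeomorphism onto $\mathbb{S}^1$. The real \emph{a priori} bounds for critical circle maps, which hold uniformly over all irrational $\alpha$ (cf.\ \cite{PZ04,Ya}), give that the scales $|x_{0,q_m-1}-1|$ have bounded geometry within each closest-return block, while across a block with a large partial quotient $a_{m+1}$ they may contract by an unboundedly large factor. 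I would therefore extract a subsequence of these dynamical scales with consecutive ratios bounded below, and then fill each long block by inserting finitely many points of $\partial U_0$ at the missing geometric scales; in that regime the relevant return map of $F_{\alpha}$ near $1$ is almost linear, which is exactly the ``$\mathcal{E}_0$-type'' phenomenon exploited elsewhere in the paper. This yields a decreasing sequence $\{z_n\}\subset\partial U_0\subset\overline{U_0}$ with $l_n:=|z_n-1|\to0$ and $\log(l_n/l_{n+1})\asymp1$, i.e.\ $\{l_n\}$ quasi-log-arithmetic, with consecutive ratios chosen large enough that $z\in S_{l_{n+N}}$ already forces $|z-1|<l_n$ for every $N\ge1$; and, by the same bounds, the balls $\mathbb{B}_{\mathbb{C}\setminus\overline{\mathbb{D}}}(z_n,C_0)$, for a suitable universal $C_0$, chain along $\partial U_0$ and jointly capture, up to $d_{\mathbb{C}\setminus\overline{\mathbb{D}}}$-distance $O(1)$, every point of a gap sector at Euclidean scale $\asymp l_n$ that is not anomalously close to $\mathbb{S}^1$.

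With these in hand the proof is a first-exit argument. Fix $N$ and $n$ and take $z\in S_{l_{n+N}}$ with $\Lambda_{\alpha_0}(z)>l_n$; as arranged, $|z-1|<l_n$, and together with $\Lambda_{\alpha_0}(z)>l_n$ this forces $z\notin\overline{\mathbb{D}}\cup U_0$, so $z$ sits in a gap sector at scale $\preccurlyeq l_{n+N}$. Let $\mathcal{T}_n:=\bigcup_{k\ge n}\mathbb{B}_{\mathbb{C}\setminus\overline{\mathbb{D}}}(z_k,C_0)$, a thin ``tube'' about the chain $\{z_k\}_{k\ge n}$ running down $\partial U_0$ to $1$, and let $m$ be the first time $F_{\alpha}^{\comp m}(z)\notin\mathcal{T}_n$. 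For $j<m$ the orbit stays in $\mathcal{T}_n$, which for $n$ large is contained in $\mathbb{D}_{1+\alpha_0}$ and meets $U_0$ only at scales $<l_n$ (finitely many small $n$ being absorbed into the universal constant); hence up to time $m$ the orbit has neither escaped $\mathbb{D}_{1+\alpha_0}$ nor met $U_0$ above scale $l_n$. Since $\Lambda_{\alpha_0}(z)>l_n$ means precisely that $z$ escapes or first meets $U_0$ above scale $l_n$, the orbit cannot stay in $\mathcal{T}_n$ forever, so $m<\infty$ and $\{F_{\alpha}^{\comp j}(z)\}_{j=0}^{m}\subseteq\mathbb{D}_{1+\alpha_0}$. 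It remains to show that the exit point $F_{\alpha}^{\comp m}(z)$, which lies $O(1)$ outside $\partial\mathcal{T}_n$, is $O(1)$-close to $z_n$; i.e.\ that the orbit leaves the tube through its outer mouth, near $z_n$, and not laterally at an inner scale. This is where one uses that the passages of the orbit near $1$ are governed by the return map whose geometry the \emph{a priori} bounds control: each such passage multiplies the scale by a bounded factor, so under the hypothesis the orbit's scale grows (up to bounded factors) monotonically from $l_{n+N}$ until it first reaches $\asymp l_n$, and at that moment the orbit lies in a gap sector adjacent to $\partial U_0$ at scale $\asymp l_n$, bounded away from $\mathbb{S}^1$; the covering property of the $\mathbb{B}_{\mathbb{C}\setminus\overline{\mathbb{D}}}(z_n,C_0)$ then gives $d_{\mathbb{C}\setminus\overline{\mathbb{D}}}(F_{\alpha}^{\comp m}(z),z_n)\preccurlyeq1$.

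The main obstacle is precisely this last point: controlling \emph{where} on the scale-$l_n$ frontier the orbit exits, uniformly in $\alpha$. For bounded-type $\alpha$ one could use McMullen-style geometric bounds, but for general $\alpha$ the closest-return scales $|q_m\alpha-p_m|$ may contract arbitrarily fast, so one needs the complex (beau) \emph{a priori} bounds for critical circle maps to control the geometry of the successive return domains near $1$ uniformly, together with a careful match between the orbit's radial scale and its angular approach to $1$, so that the inserted reference points $z_n$ lie where a first exit actually occurs and the exit point lands in a gap sector well away from $\mathbb{S}^1$ rather than in $U_0$ or in a narrow cusp against $\mathbb{S}^1$. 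Once this uniform control is secured, the remaining estimates are routine applications of Koebe and Schwarz--Pick.
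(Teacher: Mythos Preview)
Your outline captures the right intuition---an escaping orbit near $1$ must climb scales until it first reaches level $l_n$---but the first-exit/tube argument has a genuine gap exactly where you flag it. The hyperbolic tube $\mathcal{T}_n=\bigcup_{k\ge n}\mathbb{B}_{\mathbb{C}\setminus\overline{\mathbb{D}}}(z_k,C_0)$ hugs $\partial U_0$ and need not contain points of $S_{l_{n+N}}\setminus\overline{\mathbb{D}}$ that lie close to $\mathbb{S}^1$, so the first exit may already occur at time $0$ with $z$ far from every $z_k$. More seriously, the claim that ``each passage multiplies the scale by a bounded factor'' so that the scale grows essentially monotonically is unjustified: during a long closest-return block (large $a_{m+1}$) the orbit makes many returns near $1$ and may drift through the gap sectors at a fixed Euclidean scale while becoming hyperbolically far from $\partial U_0$ and pressed against $\mathbb{S}^1$. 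Nothing in your argument rules out a lateral exit at an inner scale or an exit point in the cusp against $\mathbb{S}^1$ rather than near $z_n$. Invoking beau bounds is not enough by itself: they control return geometry on $\mathbb{S}^1$, not the off-circle location of an arbitrary orbit in $\mathbb{C}\setminus\overline{\mathbb{D}}$.

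The paper replaces your metric tube by a combinatorial one built from Petersen's critical puzzle pieces $P^n$. With $\tilde P^j=P^{2j-1}\cup P^{2j}\cup P^{0,2j}$ and $E_{2j+1},E_{2j+2}$ the first pullbacks of $\tilde P^j$ into itself, one defines the \emph{escaping step} $Y_j=(P^{2j}\setminus E_{2j+2})\cup(P^{2j-1}\setminus E_{2j+1})$ and the \emph{pre-escaping steps} $X_{2j+1}=h_j^{(1)}(Y_j)$, $X_{2j+2}=h_j^{(2)}(Y_j)$. A short Markov argument (Lemma~\ref{le2.1}) shows that any orbit in $E_{2j+1}\cup E_{2j+2}$ with $\Lambda_{\alpha_0}>\mathrm{diam}(P^{0,2j})$ must hit $X_{2j+1}\cup X_{2j+2}$ before leaving $\tilde P^j$. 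The crux is then Claim~2: these pre-escaping steps have hyperbolic diameter $\preccurlyeq1$ in $\mathbb{C}\setminus\overline{\mathbb{D}}$ and sit at hyperbolic distance $\preccurlyeq1$ from explicit points $x_{0,q_{2j}+q_{2j+1}-1}$, $x_{0,q_{2j+1}+q_{2j+2}-1}\in\partial U_0$. This is proved by bounding, one by one, the hyperbolic lengths of all (roughly sixteen) boundary arcs of the relevant puzzle annuli, pulling each back through a univalent branch on a slit domain $\mathcal{D}_n$ and applying Schwarz; the real a~priori bounds enter here, but through the puzzle combinatorics rather than through a direct orbit estimate. The combinatorial first-passage through $X_{2j+1}\cup X_{2j+2}$ thus pins down not only the scale but the precise hyperbolic location of the orbit at the transition, which is what your metric first-exit argument cannot deliver without the uniform control you leave open.
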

\begin{remark}
{\rm By (5) in Subsection \ref{s2.1},
the implicit factor in the notation ``$\preccurlyeq$'' of the proposition is a universal constant greater than $1$ not depending on any variable.}
\end{remark}

This proposition may be well-known for experts, but for the sake of completeness,
we will give a proof based on the theory of Petersen's puzzle.
The rest of this section is devoted to giving this proof.
\subsection{The sequence of critical puzzle pieces}
We fix an irrational number $\alpha$.
For any $z,w\in\mathbb{S}^1\setminus\{F_{\alpha}(1)\}$,
we denote by $[z,w]_{\mathbb{S}^1}$ the closed arc in $\mathbb{S}^1$ not containing $F_{\alpha}(1)$ and
by $]z,w[_{\mathbb{S}^1}$ the open arc in $\mathbb{S}^1$ not containing $F_{\alpha}(1)$.
For any $z,w\in\partial U_0\setminus\{x_{0,-2}\}$, we denote by $[z,w]_{\partial U_0}$ the closed arc in $\partial U_0$ not containing $x_{0,-2}$ and
by $]z,w[_{\partial U_0}$ the open arc in $\partial U_0$ not containing $x_{0,-2}$.
For any curve $\gamma$ in $\mathbb{C}$, we denote by $l(\gamma)$ the Euclidean length of $\gamma$ and
for any curve $\gamma'$ in a hyperbolic Riemann surface $\mathcal{S}$,
we denote by $l_{\mathcal{S}}(\gamma')$ the hyperbolic length of $\gamma'$.

By the notation $P(I,O,R,B,G)$ we mean a closed Jordan domain in $\mathbb{C}$ whose boundary consists of
five arcs $I,O,R,G,B\subset\mathbb{C}\setminus\mathbb{D}$ connected end to end and satisfies the following properties:
\begin{itemize}
\item $I\subset\mathbb{S}^1$ with an endpoint $1$;
\item $O\subset\partial U_0$ with an endpoint $1$;
\item $R,G,B$ are contained in $\mathbb{C}\setminus(\overline{\mathbb{D}\cup U_0})$
except an endpoint of $B$ in $\mathbb{S}^1$ and an endpoint of $R$ in $\partial U_0$.
\end{itemize}

\begin{figure}[h]
\centering
\includegraphics[scale=0.5]{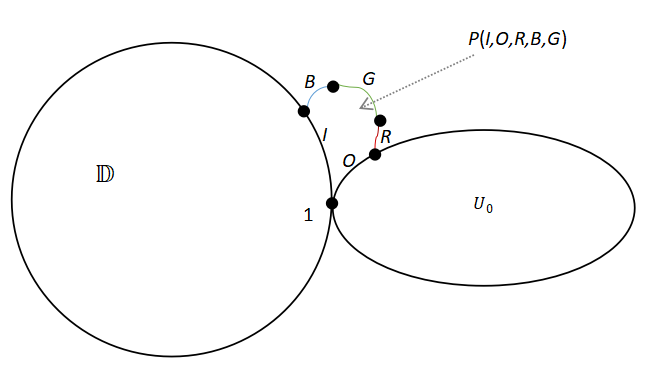}
\caption{$P(I,O,R,B,G)$ is the bounded closed Jordan domain enclosed by arcs $I,B,G,R,O$.}
\end{figure}
\noindent It follows from \cite{PZ04} that
there exists a sequence $\{P^n:=P(I_n,O_n,R_n,B_n,G_n)\}_{n=1}^{+\infty}$, called the sequence of critical puzzle pieces, with
$I_n=[1,x_{q_n}]_{\mathbb{S}^1}$ and $O_n=[1,x_{0,q_n+q_{n-1}-1}]_{\partial U_0}$
which satisfies the following properties (a)(b)(c)(d)(e):

Let $J^n:=]x_{q_n-q_{n+1}},1[_{\mathbb{S}^1}$ and $\mathbb{C}^*_{J^n}:=(\mathbb{C}^*\setminus\mathbb{S}^1)\cup J^n$. Then

\vspace{0.1cm}
\begin{itemize}
\item[(a)] For all large enough $n$, we have $l_{\mathbb{C}\setminus\overline{\mathbb{D}}}(R_n)\preccurlyeq1$,
$l_{\mathbb{C}\setminus\overline{\mathbb{D}}}(G_n)\preccurlyeq1$ and
$l_{\mathbb{C}^*_{J^n}}(B_n)\preccurlyeq1$.
\end{itemize}

\vspace{0.1cm}
\noindent We define $\tau$: $z\mapsto\frac{1}{\overline{z}}$,
$R_{0,n}:=F_{\alpha}^{\comp-1}(\tau(F_{\alpha}(R_n)))\cap\overline{U_0}$,
$G_{0,n}:=F_{\alpha}^{\comp-1}(\tau(F_{\alpha}(G_n)))\cap\overline{U_0}$ and
$B_{0,n}:=F_{\alpha}^{\comp-1}(\tau(F_{\alpha}(B_n)))\cap\overline{U_0}$. Then

\vspace{0.1cm}
\begin{itemize}
\item[(b)] For all large enough $n$, we have $l_{\mathbb{C}\setminus\overline{\mathbb{D}}}(R_{0,n})\preccurlyeq1$,
$l_{\mathbb{C}\setminus\overline{\mathbb{D}}}(G_{0,n})\preccurlyeq1$ and
$l_{\mathbb{C}\setminus\overline{\mathbb{D}}}(B_{0,n})\preccurlyeq1$.
\end{itemize}

\vspace{0.1cm}
\noindent For all $t\geq1$,
by $\varphi_S(P^t)$ we mean the closure of the component of $F_{\alpha}^{\comp-q_{t+1}}((P^t)^{\circ})$ containing $x_{q_{t+1}}$ as a boundary point.
\begin{itemize}
\item[(c)] For all large enough $n$, $F_{\alpha}^{\comp q_n}(P^n)$ can be also written as the form $P(I,O,R,B,G)$ and satisfies
$I=[1,x_{q_{n-1}}]_{\mathbb{S}^1},\ F_{\alpha}^{\comp q_n}(B_n)=O,\ F_{\alpha}^{\comp q_n}(R_n)=B,\
F_{\alpha}^{\comp q_n}(G_n)=R\cup G,\ l(O)\asymp l(I),\
l_{\mathbb{C}\setminus\overline{\mathbb{D}}}(R)\preccurlyeq1\ {\rm and}\ l_{\mathbb{C}\setminus\overline{\mathbb{D}}}(G)\preccurlyeq1$;
$F_{\alpha}^{\comp q_{n-1}}(P(I,O,R,B,G))$ can be written as $P(I',O',R',B',G')$ with
$l(I')\asymp l([1,x_{q_{n-2}}]_{\mathbb{S}^1})\asymp l(O')$ and $F_{\alpha}^{\comp q_{n-1}}(B)=O'$.
Moreover, $\varphi_S(P^n)$ can be also written as the form $P(I'',O'',R'',B'',G'')$ and satisfies
$I''=[1,x_{q_{n+1}}]_{\mathbb{S}^1}$, $O''=[1,x_{0,q_{n+1}+q_n-1}]_{\partial U_0}$,
$l_{\mathbb{C}\setminus\overline{\mathbb{D}}}(R'')\preccurlyeq1$,
$l_{\mathbb{C}\setminus\overline{\mathbb{D}}}(G'')\preccurlyeq1$ and
$l_{\mathbb{C}^*_{J^{n+1}}}(B'')\preccurlyeq1$.
\end{itemize}

\noindent Set
\begin{align*}
r_{in}(P^n)&:=\sup\{r:\mathbb{B}(1,r)\subseteq(\mathbb{C}\setminus\partial P^n)\cup\mathbb{S}^1\cup\partial U_0\},\\
r_{in}(\varphi_S(P^n))&:=\sup\{r:\mathbb{B}(1,r)\subseteq(\mathbb{C}\setminus\partial\varphi_S(P^n))\cup\mathbb{S}^1\cup\partial U_0\},\\
r_{out}(P^n)&:=\inf\{r:\mathbb{B}(1,r)\supseteq P^n\}\
{\rm and}\
r_{out}(\varphi_S(P^n)):=\inf\{r:\mathbb{B}(1,r)\supseteq\varphi_S(P^n)\}.
\end{align*}
Then

\vspace{0.1cm}
\begin{itemize}
\item[(d)] For all large enough $n$, we have $r_{in}(P^n)\asymp l(I^n)\asymp r_{out}(P^n)$ and
$r_{in}(\varphi_S(P^n))\asymp l(I^{n+1})\asymp r_{out}(\varphi_S(P^n))$.
\end{itemize}

\vspace{0.1cm}
\noindent Let
$\mathcal{P}=\{{\rm component\ of}\ F_{\alpha}^{\comp-m}((P^n)^{\circ}):m\geq-1\ {\rm and}\ n\geq1\}\cup
\{{\rm component\ of}\ F_{\alpha}^{\comp-m}(U_0):m\geq0\}$. Then

\vspace{0.1cm}
\begin{itemize}
\item[(e)] For any $P_1, P_2\in\mathcal{P}$, $P_1\cap P_2=\emptyset$ or $P_1\subseteq P_2$ or $P_1\supseteq P_2$.
Moreover, $P^1\cap P^2=\{1\}$.
\end{itemize}

\vspace{0.2cm}
\noindent In fact, it follows from [the definition of puzzles and Theorems 4.2 and 4.3, \cite{PZ04}] that (a), (c), (d) and (e) hold.
Next, we prove (b). For all large enough $n$, we set
$$K^n:=]x_{q_n-q_{n+1}},x_{-q_n}[_{\mathbb{S}^1}$$
and
$$\mathcal{D}_n:=\mathbb{C}\setminus(\gamma_0([0,1])\cup(\mathbb{S}^1\setminus K^n)\cup\tau\comp\gamma_0([0,1])),$$
where $\gamma_0:[0,1]\to\hat{\mathbb{C}}$ is a Jordan arc connecting $x_{-1}$ and
$\infty$ not intersecting $\overline{U_0\cup\mathbb{D}}$.
Let us express $F_{\alpha}^{\comp q_n}(P^n)$ as $P(I,O,R,B,G)$ and
$F_{\alpha}^{\comp q_{n-1}}(P(I,O,R,B,G))$ as $P(I',O',R',B',G')$, which satisfies the corresponding conditions in (c).
Then by the \'Swiatek-Herman real a priori bounds and an appropriate choice of $\gamma_0$,
we have
$$l_{\mathcal{D}_n}(\tau(O))\preccurlyeq1\ {\rm and}\ l_{\mathcal{D}_{n+1}}(\tau(O'))\preccurlyeq1.$$
Since the simply connected region $\mathcal{D}_n$ (resp. $\mathcal{D}_{n+1}$) contains none of critical values of
$f_{\alpha}^{\comp q_n}$ (resp. $f_{\alpha}^{\comp(q_{n-1}+q_n)}$),
there exists a univalent branch $h$ (resp. $h'$) of $f_{\alpha}^{\comp-q_n}$ (resp. $f_{\alpha}^{\comp-(q_{n-1}+q_n)}$)
on $\mathcal{D}_n$ (resp. $\mathcal{D}_{n+1}$) mapping $1$ to $x_{0,q_n-1}$ (resp. $x_{0,q_{n-1}+q_n-1}$).
Observe that $B_{0,n}=h(\tau(O))$, $R_{0,n}=h'(\tau(O'))$ and
$h$ (resp. $h'$) is an embedding from $\mathcal{D}_n$ (resp. $\mathcal{D}_{n+1}$) to $\mathbb{C}\setminus\overline{\mathbb{D}}$.
Then by the Schwarz lemma,
$$l_{\mathbb{C}\setminus\overline{\mathbb{D}}}(B_{0,n})=
l_{\mathbb{C}\setminus\overline{\mathbb{D}}}(h(\tau(O)))\leq l_{\mathcal{D}_n}(\tau(O))\preccurlyeq1$$
and
$$l_{\mathbb{C}\setminus\overline{\mathbb{D}}}(R_{0,n})=
l_{\mathbb{C}\setminus\overline{\mathbb{D}}}(h'(\tau(O')))\leq l_{\mathcal{D}_{n+1}}(\tau(O'))\preccurlyeq1.$$
Let $E^*$ be the component of $f_{\alpha}^{\comp-q_n}(\mathbb{D}^*)$ having a boundary point $x_{0,q_{n-1}+q_n-1}$,
which is contained in $U_0$.
Since $\mathbb{D}^*$ contains none of critical values of $f_{\alpha}^{\comp q_n}$,
the restriction of $f_{\alpha}^{\comp q_n}$ to $E^*$ is a holomorphic covering map.
Observe that $G_{0,n}\subseteq E^*$ is a lift of $\tau(R\cup G)$ under $f_{\alpha}^{\comp q_n}$.
Thus the Schwarz lemma and (c) give
$$l_{\mathbb{C}\setminus\overline{\mathbb{D}}}(G_{0,n})\leq l_{E^*}(G_{0,n})=l_{\mathbb{D}^*}(\tau(R\cup G))=
l_{\mathbb{C}\setminus\overline{\mathbb{D}}}(R\cup G)\preccurlyeq1.$$

\subsection{Escaping steps and pre-escaping steps}
For all $j\geq1$, we set
$$P^{0,2j}:=F_{\alpha}^{\comp-1}(\tau\comp F_{\alpha}(P^{2j}))\cap\overline{U_0}\ {\rm and}\ \tilde{P}^j:=P^{2j-1}\cup P^{2j}\cup P^{0,2j}.$$
Let
$$r_{in}(\tilde{P}^j):=\sup\{r:\mathbb{B}(1,r)\subseteq\mathbb{D}\cup\tilde{P}^j\}\
{\rm and}\ r_{out}(\tilde{P}^j):=\inf\{r:\mathbb{B}(1,r)\supseteq\tilde{P}^j\}.$$
Then the \'Swiatek-Herman real a priori bounds, (b) and (d) give that for all large enough $j$,
$$r_{in}(\tilde{P}^j)\asymp r_{out}(\tilde{P}^j)\asymp l(I_{2j-1}).$$
In the rest of this section we assume that $j$ is large enough.
In this case, $(a)-(d)$ hold and $\tilde{P}^j\subset\mathbb{D}_{1+\alpha_0}$.
\begin{figure}[h]
\centering
\includegraphics[scale=0.5]{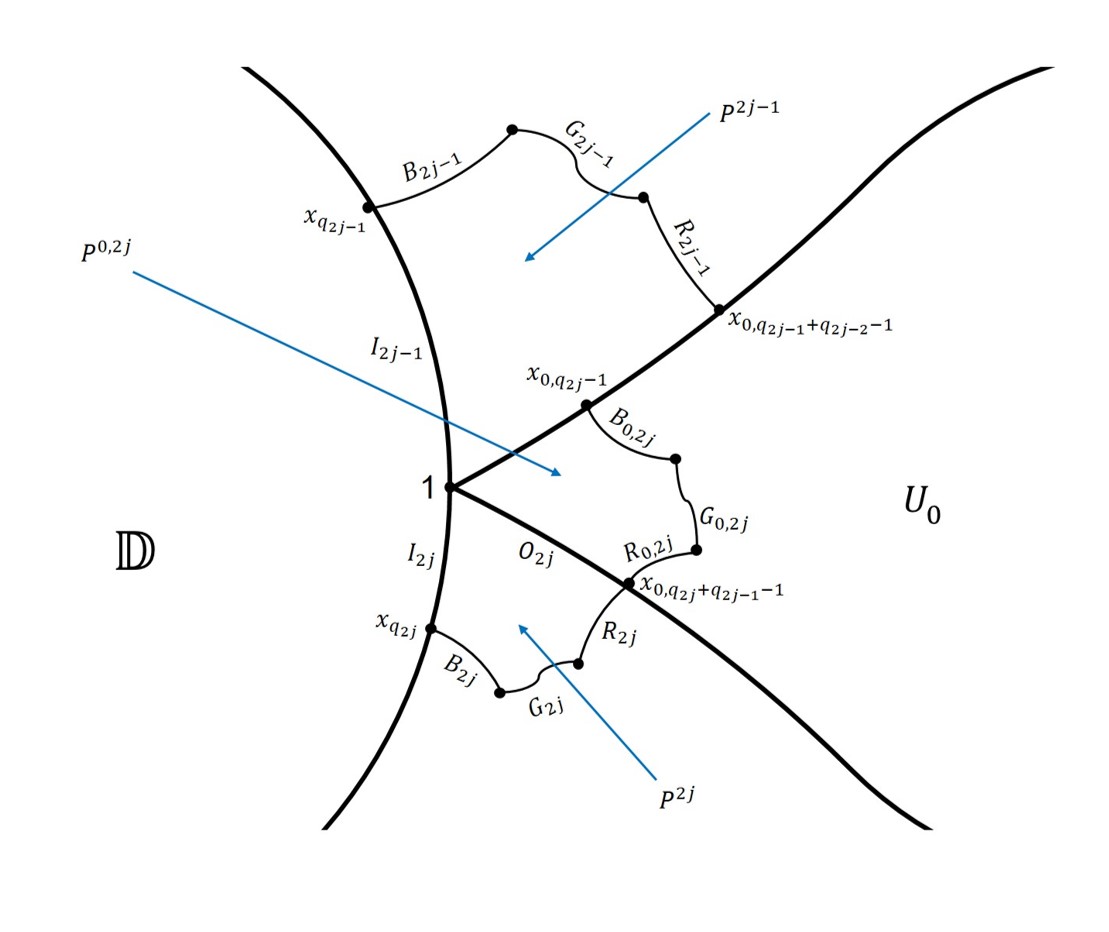}
\caption{$\tilde{P}^j$ {\rm consists of} $P^{2j-1},\ P^{2j}\ {\rm and}\ P^{0,2j}$.}
\end{figure}

We denote by $E_{2j+2}$ the closure of the component of $F_{\alpha}^{\comp-q_{2j+2}}((\tilde{P}^j)^{\comp})$
containing $x_{q_{2j+2}}$ as a boundary point
and denote by $E_{2j+1}$ the closure  of the component of $F_{\alpha}^{\comp-q_{2j+1}}((\tilde{P}^j)^{\comp})$
containing $x_{q_{2j+1}}$ as a boundary point.
It follows from (e) that $E_{2j+2}\subseteq P^{2j}$ and $E_{2j+1}\subseteq P^{2j-1}$.
Since the Jordan domain $(\tilde{P}^j)^{\comp}$ has none of critical values of $F_{\alpha}^{\comp q_{2j+1}}$ (resp. $F_{\alpha}^{\comp q_{2j+2}}$),
there exists a homeomorphic branch of $F_{\alpha}^{\comp-q_{2j+1}}$ (resp. $F_{\alpha}^{\comp-q_{2j+2}}$)
from $\tilde{P}^j$ to $E_{2j+1}$ (resp. $E_{2j+2}$), written by $h_j^{(1)}$ (resp. $h_j^{(2)}$),
and hence $E_{2j+1}$ (resp. $E_{2j+2}$) is a closed Jordan domain.
Furthermore, we have the following claim:

\vspace{0.2cm}
{\bf Claim 1.}
\begin{itemize}
\item[(aa)]
$d(\partial h_j^{(1)}(P^{2j})\setminus\partial(U_0\cup\mathbb{D}),1)\asymp l(I_{2j+1})$;
\item[(bb)]
$d(\partial h_j^{(2)}(P^{2j-1})\setminus\partial(U_0\cup\mathbb{D}),1)\asymp l(I_{2j+2})$;
\item[(cc)]
$h_j^{(1)}=F_{\alpha}^{\comp(q_{2j+2}-q_{2j+1})}\comp h_j^{(2)}$;
\item[(dd)] $E_{2j+1}\subsetneq P^{2j-1}$ and $E_{2j+2}\subsetneq P^{2j}$;
\item[(ee)] $(B_{2j-1}\cup G_{2j-1}\cup R_{2j-1})\cap\partial(P^{2j-1}\setminus E_{2j+1})\not=\emptyset$;
\item[(ff)] $(B_{2j}\cup G_{2j}\cup R_{2j})\cap\partial(P^{2j}\setminus E_{2j+2})\not=\emptyset$.
\end{itemize}

\begin{figure}[h]
\centering
\includegraphics[scale=0.3]{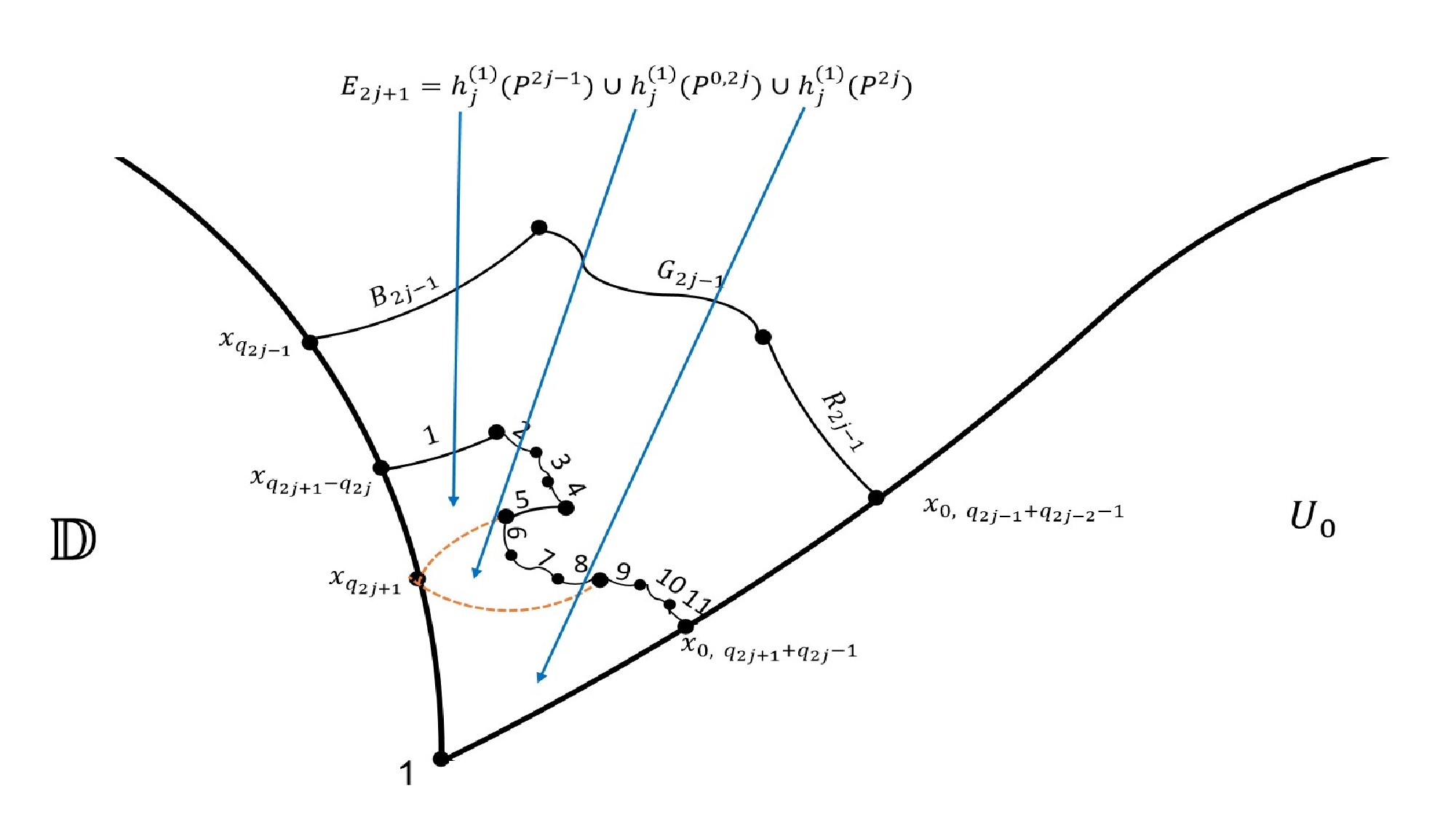}
\caption{Arcs $1-11$ mean arcs $h_j^{(1)}([x_{q_{2j-1}},x_{-q_{2j}}]_{\mathbb{S}^1}), h_j^{(1)}(B_{2j-1}),
h_j^{(1)}(G_{2j-1}), h_j^{(1)}(R_{2j-1}),$\\
$h_j^{(1)}([x_{0,q_{2j-1}+q_{2j-2}-1},x_{0,q_{2j}-1}]_{U_0}), h_j^{(1)}(B_{0,2j}), h_j^{(1)}(G_{0,2j}), h_j^{(1)}(R_{0,2j}),
h_j^{(1)}(R_{2j}), h_j^{(1)}(G_{2j}),$\\
$h_j^{(1)}(B_{2j})$ respectively.}
\end{figure}

\begin{figure}[h]
\centering
\includegraphics[scale=0.4]{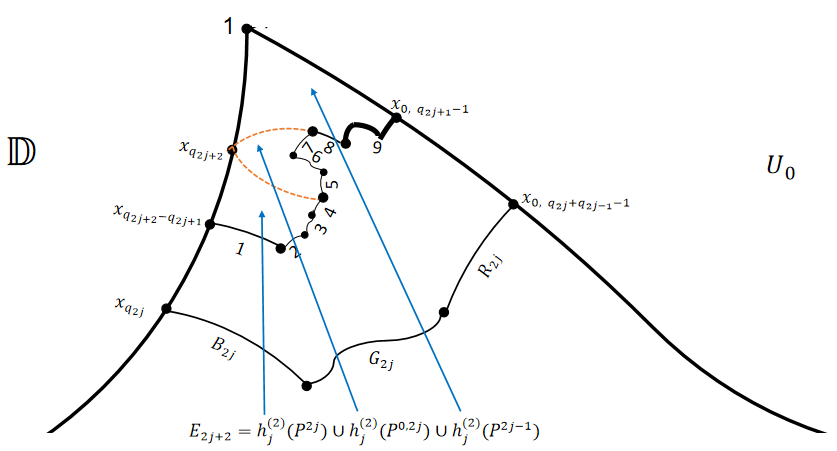}
\caption{Arcs $1-9$ mean arcs $h_j^{(2)}([x_{q_{2j}},x_{-q_{2j+1}}]_{\mathbb{S}^1}), h_j^{(2)}(B_{2j}),
h_j^{(2)}(G_{2j}), h_j^{(2)}(R_{2j}),$\\
$h_j^{(2)}(R_{0,2j}), h_j^{(2)}(G_{0,2j}), h_j^{(2)}(B_{0,2j}), h_j^{(2)}([x_{0,q_{2j-1}+q_{2j-2}-1},x_{0,q_{2j}-1}]_{U_0}),\\
h_j^{(2)}(R_{2j-1}\cup G_{2j-1}\cup B_{2j-1}\cup[x_{q_{2j-1}},x_{q_{2j+1}-q_{2j+2}}]_{\mathbb{S}^1})$ respectively,
where arc $9$ may be very complicated.}
\end{figure}

\vspace{0.2cm}
\noindent In fact, observe $h_j^{(1)}(P^{2j})=\varphi_{S}(P^{2j})$ and hence
(aa) follows immediately from (d).
By (e), we have $P^{2j-1}\supseteq P^{2j+1}$. Then (d) gives
$$l(I_{2j+2})\asymp d(\partial h_j^{(2)}(P^{2j+1})\setminus\partial(U_0\cup\mathbb{D}),1)\preccurlyeq
d(\partial h_j^{(2)}(P^{2j-1})\setminus\partial(U_0\cup\mathbb{D}),1).$$
Since $x_{q_{2j+2}}\in\overline{\partial h_j^{(2)}(P^{2j-1})\setminus\partial(U_0\cup\mathbb{D})}$,
we have
$$d(\partial h_j^{(2)}(P^{2j-1})\setminus\partial(U_0\cup\mathbb{D}),1)\leq l(I_{2j+2}).$$
Thus (bb) holds.
By definitions of $h_j^{(1)}$ and $h_j^{(2)}$, (cc) holds.
Let $U_{0,q_{2j+1}}$ be the component of $F_{\alpha}^{\comp-q_{2j+1}}(U_0)$ containing $x_{q_{2j+1}}$ as a boundary point.
By (e), we have $U_{0,q_{2j+1}}\subseteq P^{2j-1}$.
Since $U_{0,q_{2j+1}}\setminus E_{2j+1}=U_{0,q_{2j+1}}\setminus h_j^{(1)}(P^{0,2j})\not=\emptyset$,
we have $E_{2j+1}\subsetneq P^{2j-1}$. Similarly, we can obtain $E_{2j+2}\subsetneq P^{2j}$. Thus (dd) holds.
Observe that $[x_{0,q_{2j+1}+q_{2j}-1},x_{0,q_{2j-1}+q_{2j-2}-1}]_{U_0}$ is not a single point set and
$[x_{0,q_{2j+1}+q_{2j}-1},x_{0,q_{2j-1}+q_{2j-2}-1}]_{U_0}\setminus\{x_{0,q_{2j+1}+q_{2j}-1}\}$
doesn't intersect $E_{2j+1}$.
Thus $x_{0,q_{2j-1}+q_{2j-2}-1}\in(B_{2j-1}\cup G_{2j-1}\cup R_{2j-1})\cap\partial(P^{2j-1}\setminus E_{2j+1})$.
This implies (ee).
Next, we prove (ff).
It is sufficient to prove that the boundary of each component of $(P^{2j})^{\comp}\setminus E_{2j+2}$
intersects $B_{2j}\cup G_{2j}\cup R_{2j}$.
For any component $D$ of $(P^{2j})^{\comp}\setminus E_{2j+2}$,
if the boundary $\partial D$ doesn't intersect $B_{2j}\cup G_{2j}\cup R_{2j}$,
then $\partial D\subseteq[1,x_{q_{2j}}]_{\mathbb{S}^1}\cup[1,x_{0,q_{2j}+q_{2j-1}-1}]_{U_0}\cup\partial E_{2j+2}$.
Thus
$$F_{\alpha}^{\comp q_{2j+2}}(\partial D)\subseteq
F_{\alpha}^{\comp q_{2j+2}}([1,x_{q_{2j}}]_{\mathbb{S}^1}\cup[1,x_{0,q_{2j}+q_{2j-1}-1}]_{U_0}\cup\partial E_{2j+2})
\subseteq\partial\tilde{P}^j.$$
This implies that
$F_{\alpha}^{\comp q_{2j+2}}(D)=(\tilde{P}^j)^{\comp}$ and $F_{\alpha}^{\comp q_{2j+2}}(\partial D)=\partial\tilde{P}^j$.
Thus $F_{\alpha}^{\comp-q_{2j+2}}(1)\cap\partial D\not=\emptyset$.
Observe
$$F_{\alpha}^{\comp-q_{2j+2}}(1)\cap\left([1,x_{q_{2j}}]_{\mathbb{S}^1}\cup[1,x_{0,q_{2j}+q_{2j-1}-1}]_{U_0}\cup\partial E_{2j+2}\right)=
\{x_{q_{2j+2}}\}$$
and $x_{q_{2j+2}}\not\in\overline{P^{2j}\setminus E_{2j+2}}$.
Then $F_{\alpha}^{\comp-q_{2j+2}}(1)\cap\partial D=\emptyset$ and this is a contradiction.
Thus (ff) holds.

\emph{The escaping step of level $j$ for $F_{\alpha}$} is defined by
$$Y_j:=(P^{2j}\setminus E_{2j+2})\cup(P^{2j-1}\setminus E_{2j+1}),$$
and \emph{the corresponding pre-escaping step of level $j$ for $F_{\alpha}$} is defined by the following two sets:
$$X_{2j+1}:=h_j^{(1)}(Y_j)\ {\rm and}\ X_{2j+2}:=h_j^{(2)}(Y_j).$$
By (dd), we have $Y_j\not=\emptyset$, and hence $X_{2j+1}\not=\emptyset$ and $X_{2j+2}\not=\emptyset$.
Furthermore, we have the following lemma, that indicates the origin of the name of ``escaping step''.
\begin{lemma}
\label{le2.1}For all $z\in P^{2j-1}\cup P^{2j}$ with $\Lambda_{\alpha_0}(z)>{\rm diam}(P^{0,2j})$,
there exists a nonnegative integer $m_1$ such that $F_{\alpha}^{\comp m_1}(z)\in Y_j$.
Furthermore, if $z\in E_{2j+1}\cup E_{2j+2}$ with $\Lambda_{\alpha_0}(z)>{\rm diam}(P^{0,2j})$,
then there exists a nonnegative integer $m_2$ such that $F_{\alpha}^{\comp m_2}(z)\in X_{2j+1}\cup X_{2j+2}$.
\end{lemma}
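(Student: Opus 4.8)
The plan is to run a first-return argument on the nest $\tilde{P}^j$, using the decomposition $\tilde{P}^j=Y_j\cup E_{2j+1}\cup E_{2j+2}\cup P^{0,2j}$ that follows from $P^{2j-1}=E_{2j+1}\cup(P^{2j-1}\setminus E_{2j+1})$ and the analogue for $P^{2j}$. First I would record two elementary facts. The open disc $\mathbb{D}$ is forward invariant under $F_{\alpha}$ and disjoint from $\tilde{P}^j$, so once an orbit enters $\mathbb{D}$ it never returns to $\tilde{P}^j$; consequently an orbit that visits $\tilde{P}^j$ at arbitrarily large times never meets $U_0$. Moreover, since $1\in P^{0,2j}\subset\overline{U_0}$, the definition of $\Lambda_{\alpha_0}$ gives $\Lambda_{\alpha_0}\leq{\rm diam}(P^{0,2j})$ on all of $P^{0,2j}$, and more generally $\Lambda_{\alpha_0}(w)\leq{\rm diam}(P^{0,2j})$ whenever the orbit of $w$ stays in $\mathbb{D}_{1+\alpha_0}$ until it first meets $U_0$ and does so inside $P^{0,2j}$. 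In particular the hypothesis forces $z\in Y_j\cup E_{2j+1}\cup E_{2j+2}$; if $z\in Y_j$ take $m_1=0$, so I may assume $z\in E_{2j+1}\cup E_{2j+2}$.

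Next I would introduce the return map $g$, defined as $F_{\alpha}^{\comp q_{2j+1}}$ on $E_{2j+1}$ and as $F_{\alpha}^{\comp q_{2j+2}}$ on $E_{2j+2}$; by construction $h_j^{(1)},h_j^{(2)}$ are its inverse branches, so $g$ carries each of $E_{2j+1},E_{2j+2}$ homeomorphically onto $\tilde{P}^j$. The essential point to establish is that each return excursion stays inside $\mathbb{D}_{1+\alpha_0}$, that is $\bigcup_{k=0}^{q_{2j+1}}F_{\alpha}^{\comp k}(E_{2j+1})\cup\bigcup_{k=0}^{q_{2j+2}}F_{\alpha}^{\comp k}(E_{2j+2})\subset\mathbb{D}_{1+\alpha_0}$ for $j$ large; I would deduce this from $(a)-(e)$ and the \'Swiatek-Herman real a priori bounds, since for $0\leq k<q_{2j+1}$ the intermediate piece $F_{\alpha}^{\comp k}(E_{2j+1})$ is a univalent pullback of the small puzzle piece $\tilde{P}^j$ anchored at the circle point $x_{q_{2j+1}-k}$, and hence can be made to lie in $\mathbb{D}_{1+\alpha_0}$ by taking $j$ large, just as $\tilde{P}^j$ was.

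Granting this, I would iterate $g$ along the orbit of $z$, producing $z=v_0,v_1,v_2,\dots$ with $v_i=F_{\alpha}^{\comp N_i}(z)\in\tilde{P}^j$, where $N_i$ is the running sum of the excursion lengths, the construction continuing while $v_i\in E_{2j+1}\cup E_{2j+2}$; by the previous paragraph the orbit up to time $N_i$ stays in $\mathbb{D}_{1+\alpha_0}$, and having returned to $\tilde{P}^j$ it has not entered $\mathbb{D}$. There are three cases. If some $v_i\in Y_j$, set $m_1=N_i$. If some $v_i\in P^{0,2j}$, then $v_i$ is the first point at which the orbit meets $U_0$, so $\Lambda_{\alpha_0}(z)\leq{\rm diam}(P^{0,2j})$, contradicting the hypothesis. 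If $v_i\in E_{2j+1}\cup E_{2j+2}$ for every $i$, the orbit stays in $\mathbb{D}_{1+\alpha_0}$ forever and never enters $\mathbb{D}$, hence never meets $U_0$, so $\Lambda_{\alpha_0}(z)=0$, again a contradiction. Thus the first case occurs, which is the first assertion. For the second assertion, with $z\in E_{2j+1}\cup E_{2j+2}\subset P^{2j-1}\cup P^{2j}$ the same construction reaches the first case with some $i\geq1$ (since $v_0=z\notin Y_j$); then $v_{i-1}=F_{\alpha}^{\comp N_{i-1}}(z)$ lies in $E_{2j+1}$ (resp. $E_{2j+2}$) and $F_{\alpha}^{\comp q_{2j+1}}(v_{i-1})=v_i\in Y_j$ (resp. $F_{\alpha}^{\comp q_{2j+2}}(v_{i-1})=v_i\in Y_j$), which is exactly the statement $v_{i-1}\in h_j^{(1)}(Y_j)=X_{2j+1}$ (resp. $v_{i-1}\in X_{2j+2}$), so $m_2:=N_{i-1}\geq0$ works.

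I expect the main obstacle to be the claim of the second paragraph, that all the return excursions remain in $\mathbb{D}_{1+\alpha_0}$: this is where one must actually use the fine geometry of the Petersen-Zakeri puzzle, controlling the Euclidean size and location of the growing family of pullback pieces $F_{\alpha}^{\comp k}(E_{2j+1})$ and $F_{\alpha}^{\comp k}(E_{2j+2})$ via $(a)-(e)$, the real a priori bounds, and distortion estimates for the univalent inverse branches, in the spirit of the proof of $(b)$ above.
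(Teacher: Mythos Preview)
Your argument is correct and follows the same first-return scheme as the paper: the paper also defines the return map $H_j$ (your $g$), iterates it, and uses the invariance $\Lambda_{\alpha_0}(H_j(z))=\Lambda_{\alpha_0}(z)$ to rule out landing in $P^{0,2j}$ and to force eventual landing in $Y_j$, then reads off $X_{2j+1}\cup X_{2j+2}$ from the penultimate return. The point you single out as the main obstacle---that the intermediate excursions $\bigcup_k F_{\alpha}^{\comp k}(E_{2j+1})$ and $\bigcup_k F_{\alpha}^{\comp k}(E_{2j+2})$ stay in $\mathbb{D}_{1+\alpha_0}$---is glossed over in the paper's proof of the lemma itself and is instead established in the unnumbered lemma immediately following, exactly via the puzzle geometry and \'Swiatek--Herman bounds you anticipate.
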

\begin{proof}
Define a map $H_j:E_{2j+1}\cup E_{2j+2}\to\tilde{P}^j$ as follows:
$$H_j(z)=\left\{\begin{matrix}F_{\alpha}^{\comp q_{2j+1}}(z),&z\in E_{2j+1}\\
F_{\alpha}^{\comp q_{2j+2}}(z),&z\in E_{2j+2}\setminus\{1\}\end{matrix}\right..$$
It follows from $P^1\cap P^2=\{1\}$, $E_{2j+2}\subseteq P^{2j}$ and $E_{2j+1}\subseteq P^{2j-1}$ that the map $H_j$ is well-defined.
For any $z\in P^{2j-1}\cup P^{2j}$ with $\Lambda_{\alpha_0}(z)>{\rm diam}(P^{0,2j})$,
if $z\in Y_j$, then the lemma holds.
If $z\in E_{2j+1}\cup E_{2j+2}$ with $\Lambda_{\alpha_0}(z)>{\rm diam}(P^{0,2j})$, then
$H_j(z)\in P^{2j-1}\cup P^{2j}$.
If $H_j(z)\in E_{2j+1}\cup E_{2j+2}$,
then $\Lambda_{\alpha_0}(H_j(z))=\Lambda_{\alpha_0}(z)>{\rm diam}(P^{0,2j})$ and hence $H_j^{\comp2}(z)\in P^{2j-1}\cup P^{2j}$.
Similarly, until there exists a positive integer $k$ such that
$H_j^{\comp k}(z)\in Y_j$.
Then
$H_j^{\comp(k-1)}(z)\in X_{2j+1}\cup X_{2j+2}$.
Thus there exist two nonnegative integers $m_1$ and $m_2$ such that
$F_{\alpha}^{\comp m_1}(z)=H_j^{\comp k}(z)\in Y_j$
and
$F_{\alpha}^{\comp m_2}(z)=H_j^{\comp(k-1)}(z)\in X_{2j+1}\cup X_{2j+2}$.
\end{proof}

\subsection{The geometry of pre-escaping steps}
This subsection is devoted to proving the following claim:

\vspace{0.2cm}
{\bf Claim 2.}
$\sup_{x\in X_{2j+1}}d_{\mathbb{C}\setminus\overline{\mathbb{D}}}(x_{0,q_{2j}+q_{2j+1}-1},x)\preccurlyeq1$
and
$\sup_{x\in X_{2j+2}}d_{\mathbb{C}\setminus\overline{\mathbb{D}}}(x_{0,q_{2j+1}+q_{2j+2}-1},x)\preccurlyeq1$.

\vspace{0.2cm}
\noindent This claim follows immediately from the following four properties:

\begin{itemize}
\item[(1)] ${\rm diam}_{\mathbb{C}\setminus\overline{\mathbb{D}}}(h_j^{(1)}(P^{2j-1}\setminus E_{2j+1}))\preccurlyeq1\ {\rm and}\
{\rm diam}_{\mathbb{C}\setminus\overline{\mathbb{D}}}(h_j^{(2)}(P^{2j-1}\setminus E_{2j+1}))\preccurlyeq1$;
\item[(2)] ${\rm diam}_{\mathbb{C}\setminus\overline{\mathbb{D}}}(h_j^{(1)}(P^{2j}\setminus E_{2j+2}))\preccurlyeq1\ {\rm and}\
{\rm diam}_{\mathbb{C}\setminus\overline{\mathbb{D}}}(h_j^{(2)}(P^{2j}\setminus E_{2j+2}))\preccurlyeq1$;
\item[(3)] $d_{\mathbb{C}\setminus\overline{\mathbb{D}}}(x_{0,q_{2j}+q_{2j+1}-1},h_j^{(1)}(P^{2j-1}\setminus E_{2j+1}))
\preccurlyeq1$
and
$d_{\mathbb{C}\setminus\overline{\mathbb{D}}}(x_{0,q_{2j}+q_{2j+1}-1},h_j^{(1)}(P^{2j}\setminus E_{2j+2}))\preccurlyeq1$;
\item[(4)]$d_{\mathbb{C}\setminus\overline{\mathbb{D}}}(x_{0,q_{2j+1}+q_{2j+2}-1},h_j^{(2)}(P^{2j-1}\setminus E_{2j+1}))
\preccurlyeq1$
and
$d_{\mathbb{C}\setminus\overline{\mathbb{D}}}(x_{0,q_{2j+1}+q_{2j+2}-1},h_j^{(2)}(P^{2j}\setminus E_{2j+2}))
\preccurlyeq1$.
\end{itemize}

\vspace{0.2cm}
\noindent{\bf The proof of Property (1):}
Observe that $P^{2j-1}\setminus E_{2j+1}$ is enclosed by $16$ arcs
$R_{2j-1}$, $G_{2j-1}$, $B_{2j-1}$, $[x_{q_{2j+1}-q_{2j}},x_{q_{2j-1}}]_{\mathbb{S}^1}$,
$h_j^{(1)}(G_{2j})$, $h_j^{(1)}(R_{2j})$, $h_j^{(1)}(R_{0,2j})$, $h_j^{(1)}(G_{0,2j})$, $h_j^{(1)}(B_{0,2j})$,\\
$h_j^{(1)}([x_{0,q_{2j}-1},x_{0,q_{2j-1}+q_{2j-2}-1}]_{\partial U_0})$, $h_j^{(1)}(R_{2j-1})$, $h_j^{(1)}(G_{2j-1})$,
$h_j^{(1)}(B_{2j})$, $h_j^{(1)}(B_{2j-1})$, $h_j^{(1)}([x_{-q_{2j}},x_{q_{2j-1}}]_{\mathbb{S}^1})$ and
$[x_{0,q_{2j}+q_{2j+1}-1},x_{0,q_{2j-1}+q_{2j-2}-1}]_{\partial U_0}$, see Figure $3$.
Thus to prove Property (1), we only need to prove that for each $\gamma$ of these $16$ arcs, we have
$$l_{\mathbb{C}\setminus\overline{\mathbb{D}}}(h_j^{(1)}(\gamma))\preccurlyeq1\ {\rm and}\
l_{\mathbb{C}\setminus\overline{\mathbb{D}}}(h_j^{(2)}(\gamma))\preccurlyeq1.$$
It follows from (cc) and the Schwarz lemma that it is sufficient to prove
\begin{equation}
\label{e2.2}l_{\mathbb{C}\setminus\overline{\mathbb{D}}}(F_{\alpha}^{\comp(q_{2j+1}-q_{2j})}\comp h_j^{(1)}(\gamma))\preccurlyeq1.
\end{equation}
Firstly, the \'Swiatek-Herman real a priori bounds gives
$$l_{\mathbb{C}\setminus\overline{\mathbb{D}}}([x_{0,q_{2j}+q_{2j+1}-1},x_{0,q_{2j-1}+q_{2j-2}-1}]_{\partial U_0})\preccurlyeq1$$
and by (a), (b), the \'Swiatek-Herman real a priori bounds and the Schwarz lemma, we have
$l_{\mathbb{C}\setminus\overline{\mathbb{D}}}(R_{2j-1})\preccurlyeq1$,
$l_{\mathbb{C}\setminus\overline{\mathbb{D}}}(G_{2j-1})\preccurlyeq1$,
$l_{\mathbb{C}\setminus\overline{\mathbb{D}}}(h_j^{(1)}(G_{2j}))\preccurlyeq1$,
$l_{\mathbb{C}\setminus\overline{\mathbb{D}}}(h_j^{(1)}(R_{2j}))\preccurlyeq1$,
$l_{\mathbb{C}\setminus\overline{\mathbb{D}}}(h_j^{(1)}(R_{0,2j}))\preccurlyeq1$,
$l_{\mathbb{C}\setminus\overline{\mathbb{D}}}(h_j^{(1)}(G_{0,2j}))\preccurlyeq1$,
$l_{\mathbb{C}\setminus\overline{\mathbb{D}}}(h_j^{(1)}(B_{0,2j}))\preccurlyeq1$,
$l_{\mathbb{C}\setminus\overline{\mathbb{D}}}(h_j^{(1)}(R_{2j-1}))\preccurlyeq1$,
$l_{\mathbb{C}\setminus\overline{\mathbb{D}}}(h_j^{(1)}(G_{2j-1}))\preccurlyeq1$ and
$$l_{\mathbb{C}\setminus\overline{\mathbb{D}}}(h_j^{(1)}([x_{0,q_{2j}-1},x_{0,q_{2j-1}+q_{2j-2}-1}]_{\partial U_0}))\preccurlyeq1.$$

Next, we prove
$$l_{\mathbb{C}\setminus\overline{\mathbb{D}}}(h_j^{(1)}(B_{2j}))\preccurlyeq1\
{\rm and}\ l_{\mathbb{C}\setminus\overline{\mathbb{D}}}(h_j^{(1)}(B_{2j-1})\preccurlyeq1.$$
By (c), $F_{\alpha}^{\comp q_{2j}}(P^{2j})$ can be expressed as
$P(I,O,B,R,G)$
with
$I=[1,x_{q_{2j-1}}]_{\mathbb{S}^1}\ {\rm and}\ l(O)\asymp l(I)$;
$F_{\alpha}^{\comp q_{2j-1}}(P^{2j-1})$ can be expressed as $P(I',O',B',R',G')$
with
$I'=[1,x_{q_{2j-2}}]_{\mathbb{S}^1}\ {\rm and}\ l(O')\asymp l(I')$.
By the \'Swiatek-Herman real a priori bounds and an appropriate choice of $\gamma_0$,
we have $l_{\mathcal{D}_{2j+2}}(O)\asymp1$ and $l_{\mathcal{D}_{2j+1}}(O')\asymp1$.
Since the simply connected region $\mathcal{D}_{2j+2}$ (resp. $\mathcal{D}_{2j+1}$) contains none of critical values of
$f_{\alpha}^{\comp(q_{2j+1}+q_{2j})}$ (resp. $f_{\alpha}^{\comp(q_{2j}+q_{2j-1})}$),
there exists a univalent branch $h$ (resp. $h'$) of $f_{\alpha}^{\comp-(q_{2j+1}+q_{2j})}$ (resp. $f_{\alpha}^{\comp-(q_{2j}+q_{2j-1})}$)
on $\mathcal{D}_{2j+2}$ (resp. $\mathcal{D}_{2j+1}$) mapping $1$ to $x_{0,q_{2j+1}+q_{2j}-1}$ (resp. $x_{0,q_{2j}+q_{2j-1}-1}$).
Observe that $h$ (resp. $h'$) is an embedding from $\mathcal{D}_{2j+2}$ (resp. $\mathcal{D}_{2j+1}$) to $\mathbb{C}\setminus\overline{\mathbb{D}}$.
Then by the Schwarz lemma,
$$l_{\mathbb{C}\setminus\overline{\mathbb{D}}}(h(O))\preccurlyeq1\ {\rm and}\
l_{\mathbb{C}\setminus\overline{\mathbb{D}}}(h'(O'))\preccurlyeq1.$$
Observe
$$h(O)=h_j^{(1)}(B_{2j})\ {\rm and}\ h'(O')=F_{\alpha}^{\comp(q_{2j+1}-q_{2j})}(h_j^{(1)}(B_{2j-1})).$$
Thus
\begin{equation}
\label{e2.1}l_{\mathbb{C}\setminus\overline{\mathbb{D}}}(h_j^{(1)}(B_{2j}))\preccurlyeq1\ {\rm and}\
l_{\mathbb{C}\setminus\overline{\mathbb{D}}}(F_{\alpha}^{\comp(q_{2j+1}-q_{2j})}(h_j^{(1)}(B_{2j-1})))\preccurlyeq1.
\end{equation}
By the Schwarz lemma,
\begin{equation*}
l_{\mathbb{C}\setminus\overline{\mathbb{D}}}(h_j^{(1)}(B_{2j-1}))\preccurlyeq1.
\end{equation*}

The above two steps give that except these three arcs $B_{2j-1}$, $[x_{q_{2j+1}-q_{2j}},x_{q_{2j-1}}]_{\mathbb{S}^1}$
and $h_j^{(1)}([x_{-q_{2j}},x_{q_{2j-1}}]_{\mathbb{S}^1})$,
each other arc $\gamma$ in the boundary of $P^{2j-1}\setminus E_{2j+1}$ satisfies
$l_{\mathbb{C}\setminus\overline{\mathbb{D}}}(\gamma)\preccurlyeq1$, and
by the Schwarz lemma we have
$$l_{\mathbb{C}\setminus\overline{\mathbb{D}}}(F_{\alpha}^{\comp(q_{2j+1}-q_{2j})}\comp h_j^{(1)}(\gamma))\preccurlyeq1.$$
Thus, at last, the rest is to prove
$$l_{\mathbb{C}\setminus\overline{\mathbb{D}}}(F_{\alpha}^{\comp(q_{2j+1}-q_{2j})}\comp h_j^{(1)}(B_{2j-1}))\preccurlyeq1,$$
$$l_{\mathbb{C}\setminus\overline{\mathbb{D}}}(F_{\alpha}^{\comp(q_{2j+1}-q_{2j})}\comp h_j^{(1)}([x_{q_{2j+1}-q_{2j}},x_{q_{2j-1}}]_{\mathbb{S}^1}))\preccurlyeq1$$
and
$$l_{\mathbb{C}\setminus\overline{\mathbb{D}}}(F_{\alpha}^{\comp(q_{2j+1}-q_{2j})}\comp h_j^{(1)}(h_j^{(1)}([x_{-q_{2j}},x_{q_{2j-1}}]_{\mathbb{S}^1})))\preccurlyeq1.$$
By (\ref{e2.1}), we have
$l_{\mathbb{C}\setminus\overline{\mathbb{D}}}(F_{\alpha}^{\comp(q_{2j+1}-q_{2j})}\comp h_j^{(1)}(B_{2j-1}))\preccurlyeq1$.
Observe
$$F_{\alpha}^{\comp(q_{2j+1}-q_{2j})}\comp h_j^{(1)}([x_{q_{2j+1}-q_{2j}},x_{q_{2j-1}}]_{\mathbb{S}^1})
=[x_{0,q_{2j+1}-1},x_{0,q_{2j-1}+q_{2j}-1}]_{\partial U_0}.$$
Again, by the \'Swiatek-Herman real a priori bounds
$$l_{\mathbb{C}\setminus\overline{\mathbb{D}}}([x_{0,q_{2j+1}-1},x_{0,q_{2j-1}+q_{2j}-1}]_{\partial U_0})\preccurlyeq1.$$
Thus we have
$$l_{\mathbb{C}\setminus\overline{\mathbb{D}}}(F_{\alpha}^{\comp(q_{2j+1}-q_{2j})}\comp h_j^{(1)}([x_{q_{2j+1}-q_{2j}},x_{q_{2j-1}}]_{\mathbb{S}^1}))\preccurlyeq1.$$
Since $\mathcal{D}_{2j+1}$ contains none of critical values of
$f_{\alpha}^{\comp q_{2j+1}}$, there exists a univalent branch
$h''$ of $f_{\alpha}^{\comp-q_{2j+1}}$ on $\mathcal{D}_{2j+1}$ mapping $1$ to $x_{0,q_{2j+1}-1}$ and
$h''$ is an embedding from $\mathcal{D}_{2j+1}$ to $\mathbb{C}\setminus\overline{\mathbb{D}}$.
By \'Swiatek-Herman real a priori bounds and an appropriate choice of $\gamma_0$, $l_{\mathcal{D}_{2j+1}}([1,x_{0,q_{2j-1}+q_{2j}-1}]_{\partial U_0})\asymp1$.
Then it follows from the Schwarz lemma that
$$l_{\mathbb{C}\setminus\overline{\mathbb{D}}}(h''([1,x_{0,q_{2j-1}+q_{2j}-1}]_{\partial U_0}))\preccurlyeq1.$$
Again, observe
$$F_{\alpha}^{\comp(q_{2j+1}-q_{2j})}\comp h_j^{(1)}(h_j^{(1)}([x_{-q_{2j}},x_{q_{2j-1}}]_{\mathbb{S}^1}))=
h''([1,x_{0,q_{2j-1}+q_{2j}-1}]_{\partial U_0}).$$
Then we have
$$l_{\mathbb{C}\setminus\overline{\mathbb{D}}}(F_{\alpha}^{\comp(q_{2j+1}-q_{2j})}\comp h_j^{(1)}(h_j^{(1)}([x_{-q_{2j}},x_{q_{2j-1}}]_{\mathbb{S}^1})))\preccurlyeq1.$$

\vspace{0.2cm}
\noindent{\bf The proof of Property (2):}
Set
$$\tilde{E}_{2j+2}:=h_j^{(2)}(P^{2j+1})\cup h_j^{(2)}(P^{2j})\cup h_j^{(2)}(P^{0,2j+1}).$$
By (e), we have $\tilde{E}_{2j+2}\subseteq E_{2j+2}$.
Observe that (\ref{e2.2}) in the proof of Property (1) implies
$${\rm diam}_{\mathbb{C}\setminus\overline{\mathbb{D}}}(F_{\alpha}^{\comp(q_{2j+1}-q_{2j})}\comp h_j^{(1)}(P^{2j-1}\setminus E_{2j+1}))\preccurlyeq1.$$
In the same way, one can obtain
\begin{equation*}
{\rm diam}_{\mathbb{C}\setminus\overline{\mathbb{D}}}(F_{\alpha}^{\comp(q_{2j+2}-q_{2j+1})}\comp  h_j^{(2)}(P^{2j}\setminus\tilde{E}_{2j+2}))\preccurlyeq1.
\end{equation*}
Then by (cc), we have
\begin{equation*}
{\rm diam}_{\mathbb{C}\setminus\overline{\mathbb{D}}}(h_j^{(1)}(P^{2j}\setminus\tilde{E}_{2j+2}))\preccurlyeq1
\end{equation*}
and by the Schwarz lemma
\begin{equation*}
{\rm diam}_{\mathbb{C}\setminus\overline{\mathbb{D}}}(h_j^{(2)}(P^{2j}\setminus\tilde{E}_{2j+2}))\preccurlyeq1.
\end{equation*}
Thus
\begin{equation*}
{\rm diam}_{\mathbb{C}\setminus\overline{\mathbb{D}}}(h_j^{(1)}(P^{2j}\setminus E_{2j+2}))\preccurlyeq1\ {\rm and}\
{\rm diam}_{\mathbb{C}\setminus\overline{\mathbb{D}}}(h_j^{(2)}(P^{2j}\setminus E_{2j+2}))\preccurlyeq1.
\end{equation*}

\vspace{0.2cm}
\noindent{\bf The proof of Property (3):} Observe that the path
$h_j^{(1)}(B_{2j})+h_j^{(1)}(G_{2j})+h_j^{(1)}(R_{2j})+h_j^{(1)}(R_{0,2j})+h_j^{(1)}(G_{0,2j})+h_j^{(1)}(B_{0,2j})$
connects $x_{0,q_{2j}+q_{2j+1}-1}=h_j^{(1)}(x_{q_{2j}})$ and the boundary of $h_j^{(1)}(P^{2j-1}\setminus E_{2j+1})$.
Again, in the proof of Property (1) we see that
$l_{\mathbb{C}\setminus\overline{\mathbb{D}}}(h_j^{(1)}(B_{2j}))\preccurlyeq1,\
l_{\mathbb{C}\setminus\overline{\mathbb{D}}}(h_j^{(1)}(G_{2j}))\preccurlyeq1,\
l_{\mathbb{C}\setminus\overline{\mathbb{D}}}(h_j^{(1)}(R_{2j}))\preccurlyeq1,\
l_{\mathbb{C}\setminus\overline{\mathbb{D}}}(h_j^{(1)}(R_{0,2j}))\preccurlyeq1,\
l_{\mathbb{C}\setminus\overline{\mathbb{D}}}(h_j^{(1)}(G_{0,2j}))\preccurlyeq1,\
l_{\mathbb{C}\setminus\overline{\mathbb{D}}}(h_j^{(1)}(B_{0,2j}))
\preccurlyeq1$.
Thus
\begin{align*}
&d_{\mathbb{C}\setminus\overline{\mathbb{D}}}(x_{0,q_{2j}+q_{2j+1}-1},h_j^{(1)}(P^{2j-1}\setminus E_{2j+1}))\\
\leq& l_{\mathbb{C}\setminus\overline{\mathbb{D}}}(h_j^{(1)}(B_{2j}))+l_{\mathbb{C}\setminus\overline{\mathbb{D}}}(h_j^{(1)}(G_{2j}))+
l_{\mathbb{C}\setminus\overline{\mathbb{D}}}(h_j^{(1)}(R_{2j}))
+l_{\mathbb{C}\setminus\overline{\mathbb{D}}}(h_j^{(1)}(R_{0,2j}))
+l_{\mathbb{C}\setminus\overline{\mathbb{D}}}(h_j^{(1)}(G_{0,2j}))\\
&+l_{\mathbb{C}\setminus\overline{\mathbb{D}}}(h_j^{(1)}(B_{0,2j}))\\
\preccurlyeq&1.
\end{align*}
By (ff), the path $h_j^{(1)}(B_{2j})+h_j^{(1)}(G_{2j})+h_j^{(1)}(R_{2j})$ or its subpath connects
$x_{0,q_{2j}+q_{2j+1}-1}$ and the boundary of $h_j^{(1)}(P^{2j}\setminus E_{2j+2})$, and hence
\begin{align*}
&d_{\mathbb{C}\setminus\overline{\mathbb{D}}}(x_{0,q_{2j}+q_{2j+1}-1},h_j^{(1)}(P^{2j}\setminus E_{2j+2}))\\
\leq& l_{\mathbb{C}\setminus\overline{\mathbb{D}}}(h_j^{(1)}(B_{2j}))+l_{\mathbb{C}\setminus\overline{\mathbb{D}}}(h_j^{(1)}(G_{2j}))+
l_{\mathbb{C}\setminus\overline{\mathbb{D}}}(h_j^{(1)}(R_{2j}))\\
\preccurlyeq&1.
\end{align*}
Thus Property (3) follows.

\vspace{0.2cm}
\noindent{\bf The proof of Property (4):}
In the same way as proving
$$l_{\mathbb{C}\setminus\overline{\mathbb{D}}}(h_j^{(1)}(B_{2j}))\preccurlyeq1,
l_{\mathbb{C}\setminus\overline{\mathbb{D}}}(h_j^{(1)}(G_{2j}))\preccurlyeq1,
l_{\mathbb{C}\setminus\overline{\mathbb{D}}}(h_j^{(1)}(R_{2j}))\preccurlyeq1,$$
$$l_{\mathbb{C}\setminus\overline{\mathbb{D}}}(h_j^{(1)}(R_{0,2j}))\preccurlyeq1,
l_{\mathbb{C}\setminus\overline{\mathbb{D}}}(h_j^{(1)}(G_{0,2j}))\preccurlyeq1,
l_{\mathbb{C}\setminus\overline{\mathbb{D}}}(h_j^{(1)}(B_{0,2j}))\preccurlyeq1$$
and
$$l_{\mathbb{C}\setminus\overline{\mathbb{D}}}(h_j^{(1)}([x_{0,q_{2j}-1},x_{0,q_{2j-1}+q_{2j-2}-1}]_{\partial U_0}))
\preccurlyeq1,$$
we can prove
$$l_{\mathbb{C}\setminus\overline{\mathbb{D}}}(h_j^{(2)}(B_{2j+1}))\preccurlyeq1,
l_{\mathbb{C}\setminus\overline{\mathbb{D}}}(h_j^{(2)}(G_{2j+1}))\preccurlyeq1,
l_{\mathbb{C}\setminus\overline{\mathbb{D}}}(h_j^{(2)}(R_{2j+1}))\preccurlyeq1,$$
$$l_{\mathbb{C}\setminus\overline{\mathbb{D}}}(h_j^{(2)}(R_{0,2j+1}))\preccurlyeq1,
l_{\mathbb{C}\setminus\overline{\mathbb{D}}}(h_j^{(2)}(G_{0,2j+1}))\preccurlyeq1,
l_{\mathbb{C}\setminus\overline{\mathbb{D}}}(h_j^{(2)}(B_{0,2j+1}))\preccurlyeq1$$
and
$$l_{\mathbb{C}\setminus\overline{\mathbb{D}}}(h_j^{(2)}([x_{0,q_{2j+1}-1},x_{0,q_{2j}+q_{2j-1}-1}]_{\partial U_0}))
\preccurlyeq1.$$
Applying the Schwarz lemma to $l_{\mathbb{C}\setminus\overline{\mathbb{D}}}(h_j^{(1)}(B_{2j}))\preccurlyeq1,
l_{\mathbb{C}\setminus\overline{\mathbb{D}}}(h_j^{(1)}(G_{2j}))\preccurlyeq1$ and
$l_{\mathbb{C}\setminus\overline{\mathbb{D}}}(h_j^{(1)}(R_{2j}))\preccurlyeq1,$
we have
$$l_{\mathbb{C}\setminus\overline{\mathbb{D}}}(h_j^{(2)}(B_{2j}))\preccurlyeq1,
l_{\mathbb{C}\setminus\overline{\mathbb{D}}}(h_j^{(2)}(G_{2j}))\preccurlyeq1\ {\rm and}\
l_{\mathbb{C}\setminus\overline{\mathbb{D}}}(h_j^{(2)}(R_{2j}))\preccurlyeq1.$$
Observe that the path $h_j^{(2)}(B_{2j+1})+h_j^{(2)}(G_{2j+1})+h_j^{(2)}(R_{2j+1})$ connects $x_{0,q_{2j+1}+q_{2j+2}-1}$ and
the boundary of $h_j^{(2)}(P^{2j-1}\setminus E_{2j+1})$; by (ff),
the path $h_j^{(2)}(B_{2j+1})+h_j^{(2)}(G_{2j+1})+h_j^{(2)}(R_{2j+1})+h_j^{(2)}(R_{0,2j+1})+h_j^{(2)}(G_{0,2j+1})+h_j^{(2)}(B_{0,2j+1})+
h_j^{(2)}([x_{0,q_{2j+1}-1},x_{0,q_{2j}+q_{2j-1}-1}]_{\partial U_0})+h_j^{(2)}(R_{2j})+h_j^{(2)}(G_{2j})+h_j^{(2)}(B_{2j})$ or its subpath
connects $x_{0,q_{2j+1}+q_{2j+2}-1}$ and the boundary of $h_j^{(2)}(P^{2j}\setminus E_{2j+2})$.
Thus
\begin{align*}
&d_{\mathbb{C}\setminus\overline{\mathbb{D}}}(x_{0,q_{2j+1}+q_{2j+2}-1},h_j^{(2)}(P^{2j-1}\setminus E_{2j+1}))\\
\leq& l_{\mathbb{C}\setminus\overline{\mathbb{D}}}(h_j^{(2)}(B_{2j+1}))+l_{\mathbb{C}\setminus\overline{\mathbb{D}}}(h_j^{(2)}(G_{2j+1}))+
l_{\mathbb{C}\setminus\overline{\mathbb{D}}}(h_j^{(2)}(R_{2j+1}))\\
\preccurlyeq&1
\end{align*}
and
\begin{align*}
&d_{\mathbb{C}\setminus\overline{\mathbb{D}}}(x_{0,q_{2j+1}+q_{2j+2}-1},h_j^{(2)}(P^{2j}\setminus E_{2j+2}))\\
\leq& l_{\mathbb{C}\setminus\overline{\mathbb{D}}}(h_j^{(2)}(B_{2j+1}))+l_{\mathbb{C}\setminus\overline{\mathbb{D}}}(h_j^{(2)}(G_{2j+1}))+
l_{\mathbb{C}\setminus\overline{\mathbb{D}}}(h_j^{(2)}(R_{2j+1}))
+l_{\mathbb{C}\setminus\overline{\mathbb{D}}}(h_j^{(2)}(R_{0,2j+1}))
+l_{\mathbb{C}\setminus\overline{\mathbb{D}}}(h_j^{(2)}(G_{0,2j+1}))\\
&+l_{\mathbb{C}\setminus\overline{\mathbb{D}}}(h_j^{(2)}(B_{0,2j+1}))
+l_{\mathbb{C}\setminus\overline{\mathbb{D}}}(h_j^{(2)}([x_{0,q_{2j+1}-1},x_{0,q_{2j}+q_{2j-1}-1}]_{\partial U_0}))\\
&+l_{\mathbb{C}\setminus\overline{\mathbb{D}}}(h_j^{(2)}(R_{2j}))+l_{\mathbb{C}\setminus\overline{\mathbb{D}}}(h_j^{(2)}(G_{2j}))+
l_{\mathbb{C}\setminus\overline{\mathbb{D}}}(h_j^{(2)}(B_{2j}))\\
\preccurlyeq&1.
\end{align*}

\subsection{The proof of Proposition \ref{pr2.1}}
We define a decreasing sequence $\{l_t\}_{t=1}^{+\infty}$ of positive real numbers and
an increasing sequence $\{n_t\}_{t=1}^{+\infty}$ of large enough positive integers such that for all $t\geq1$,
\begin{itemize}
\item[($A_1$)] $S_{l_t}\supseteq\tilde{P}^{n_{2t-1}}$
with $l_t\asymp{\rm diam}(\tilde{P}^{n_{2t-1}})$ and $l_t>{\rm diam}(P^{0,2n_{2t-1}})$;
\item[($A_2$)] $S_{l_{t+1}}\subseteq\tilde{P}^{n_{2t}}$ with $l_{t+1}\asymp{\rm diam}(\tilde{P}^{n_{2t}})$;
\item[($A_3$)] $E^{n_{2t-1}}:=E_{2n_{2t-1}+1}\cup E_{2n_{2t-1}+2}\supseteq P^{2n_{2t}-1}\cup P^{2n_{2t}}$ and $n_{t+1}-n_t\preccurlyeq1$.
\end{itemize}
The existence follows easily from (b), (d), (aa), (bb) and  the \'Swiatek-Herman real a priori bounds.
By ($A_1$), ($A_2$), ($A_3$) and the \'Swiatek-Herman real a priori bounds, we have
$$l_t\geq l([1,x_{q_{2n_{2t-1}-1}}]_{\mathbb{S}^1}),\
l_{t+1}\leq l([1,x_{q_{2n_{2t}}}]_{\mathbb{S}^1})\ {\rm and}\
l_t\asymp l_{t+1}.$$
Then $\{l_t\}_{t=1}^{+\infty}$ is a quasi-log-arithmetic sequence.
In this case, we take $z_t$ as a point in the intersection
between $\partial\mathbb{B}(1,l_t)$ and $\overline{U_0}$.
At last, we check that for large enough $t_0$,
$\{z_t\}_{t\geq t_0}$ is what we want.
We assume that $t$ is large enough. By \'Swiatek-Herman real a priori bounds and ($A_1$), we have
$$d_{\mathbb{C}\setminus\overline{\mathbb{D}}}(x_{0,q_{2n_{2t-1}}+q_{2n_{2t-1}+1}-1},z_t)\preccurlyeq1\ {\rm and}\
d_{\mathbb{C}\setminus\overline{\mathbb{D}}}(x_{0,q_{2n_{2t-1}+1}+q_{2n_{2t-1}+2}-1},z_t)\preccurlyeq1.$$
Then by Claim $2$,
\begin{equation}
\label{e2.5}\sup_{z\in X_{2n_{2t-1}+1}\cup X_{2n_{2t-1}+2}}d_{\mathbb{C}\setminus\overline{\mathbb{D}}}(z_t,z)\preccurlyeq1.
\end{equation}
For all $z\in S_{l_{t+1}}$ with $\Lambda_{\alpha_0}(z)>l_t$, we have
$z\in E^{n_{2t-1}}$ and $\Lambda_{\alpha_0}(z)>{\rm diam}(P^{0,2n_{2t-1}})$.
It follows from Lemma \ref{le2.1}
that there exists $m\geq0$ such that $F_{\alpha}^{\comp m}(z)\in X_{2n_{2t-1}+1}\cup X_{2n_{2t-1}+2}$, and hence
by (\ref{e2.5}), we have
$$d_{\mathbb{C}\setminus\overline{\mathbb{D}}}(F_{\alpha}^{\comp m}(z),z_t)\preccurlyeq1.$$
Now to complete the rest of the proof, we only need to prove that
for large enough $t$,
$$F_{\alpha}^{\comp s}(h_t^{(1)}(P^{2t-1}\cup P^{2t}))
\subseteq\mathbb{D}_{1+\alpha_0}$$
for all $0\leq s\leq q_{2t+1}$;
$$F_{\alpha}^{\comp s}(h_t^{(2)}(P^{2t-1}\cup P^{2t}))\subseteq\mathbb{D}_{1+\alpha_0}$$
for all $0\leq s\leq q_{2t+2}$.
In fact, this follows from the following lemma:
\begin{lemma}
For large enough $t$, we have
$$W:=\cup_{s=0}^{q_{t+1}+q_t-1}
F_{\alpha}^{\comp(q_{t+2}+q_{t+1}-s)}(\varphi_S^{\comp2}(P^t))\subseteq\mathbb{D}_{1+\alpha_0},$$
where $\varphi_S^{\comp2}(P^t)$ means the closure of the component of $F_{\alpha}^{-\comp(q_{t+1}+q_{t+2})}((P^t)^{\circ})$
containing $x_{q_{t+2}}$ as a boundary point. Moreover, for any positive integer $s$,
each component $P$ of $F_{\alpha}^{\comp-s}((P^t)^{\comp})$
whose closure intersects $\mathbb{S}^1$ satisfies $P\subseteq W$.
\end{lemma}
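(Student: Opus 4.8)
The plan is to treat the two assertions separately, following the forward orbit of $\varphi_S^{\comp2}(P^t)$ through the puzzle. Write $Q_m:=F_\alpha^{\comp m}(\varphi_S^{\comp2}(P^t))$, so that $W=\bigcup_{m=q_{t+2}-q_t+1}^{\,q_{t+1}+q_{t+2}}Q_m$ and $Q_{q_{t+1}+q_{t+2}}=P^t$. First I would observe that for $t$ large none of the pull-backs occurring here meets the unique critical value $F_\alpha(1)$ of $F_\alpha$ lying near $\mathbb S^1$, since $P^t$ is contained in a ball $\mathbb B(1,r)$ with $r\preccurlyeq l(I_t)$ while $|F_\alpha(1)-1|\asymp1$; hence $F_\alpha^{\comp j}$ is univalent on $\varphi_S^{\comp2}(P^t)$ for every $0\le j\le q_{t+1}+q_{t+2}$, every $Q_m$ with $m\ge1$ is a closed Jordan domain attached to $\mathbb S^1$ along a single arc (and, at the finitely many $m$ where it returns near $1$, also to a pull-back of $\partial U_0$), and its boundary is this inner arc together with finitely many outer arcs. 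Exactly as in the proofs of $(a)$–$(c)$ and of Claims $1$ and $2$, pulling the outer arcs of $P^t$ and of $\varphi_S(P^t)$ back along the univalent branches of $F_\alpha^{\comp-q_{t+1}}$, $F_\alpha^{\comp-(q_{t+1}+q_{t+2})}$ and their intermediate iterates, and invoking the \'Swiatek--Herman real \emph{a priori} bounds together with the Schwarz lemma, gives that every outer arc of every $Q_m$ has hyperbolic length $\preccurlyeq1$ in $\mathbb C\setminus\overline{\mathbb D}$.

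To deduce $W\subseteq\mathbb D_{1+\alpha_0}$ I would combine this with a size estimate for the inner arcs. The inner arc of $Q_m$ is $F_\alpha^{\comp m}$ applied to $\varphi_S^{\comp2}(P^t)\cap(\mathbb S^1\cup\partial U_0)$, and the real \emph{a priori} bounds identify it, up to a universal factor, with a dynamical interval of the critical circle map $F_\alpha|_{\mathbb S^1}$ of level between $t$ and $t+2$; in particular its Euclidean diameter is $\preccurlyeq l(I_t)$, uniformly over the given range of $m$. Since an arc of bounded hyperbolic length in $\mathbb C\setminus\overline{\mathbb D}$ issuing from a point at Euclidean distance $\rho$ from $\mathbb S^1$ stays within Euclidean distance $\preccurlyeq\rho$ of $\mathbb S^1$, it follows that ${\rm diam}(Q_m)\preccurlyeq l(I_t)$ for all such $m$. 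As each $Q_m$ meets $\mathbb S^1$ and $l(I_t)\to0$, for $t$ large every $Q_m$ lies in $\mathbb D_{1+\alpha_0}$, i.e. $W\subseteq\mathbb D_{1+\alpha_0}$.

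For the second assertion the essential point is a combinatorial identification. Because $F_\alpha|_{\mathbb S^1}$ is an orientation-preserving circle homeomorphism of degree $1$, the set $(F_\alpha|_{\mathbb S^1})^{-k}([1,x_{q_t}]_{\mathbb S^1})$ is a single arc for every $k\ge0$, so $F_\alpha^{\comp-k}((P^t)^{\circ})$ has exactly one component whose closure meets $\mathbb S^1$; pushing $\varphi_S^{\comp2}(P^t)$ forward and using the univalence above, I would check that this component is precisely $Q_{q_{t+1}+q_{t+2}-k}$ whenever $0\le k\le q_{t+1}+q_t-1$. Thus every component $P$ of $F_\alpha^{\comp-s}((P^t)^{\comp})$ attached to $\mathbb S^1$ of depth $s\le q_{t+1}+q_t-1$ is literally one of the $Q_m$, hence lies in $W$; for a deeper such $P$, property $(e)$ leaves only the possibilities that $P$ is disjoint from, contained in, or contains each $Q_m$, the last being impossible because $(P^t)^{\circ}$ is not forward invariant, and since the $q_{t+1}+q_t$ consecutive depths realized along the orbit of $\varphi_S^{\comp2}(P^t)$ exhaust a full renormalization cycle of the critical puzzle near $1$, $P$ must be nested inside some $Q_m$, so again $P\subseteq W$. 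Applying this to $h_t^{(1)}(P^{2t-1}\cup P^{2t})\subseteq E_{2t+1}$ (which is attached to $\mathbb S^1$, and whose iterates $F_\alpha^{\comp s}(\cdot)$, $0\le s\le q_{2t+1}$, are $\mathbb S^1$-attached pull-backs of $P^{2t-1}\cup P^{2t}$), and likewise to $h_t^{(2)}(P^{2t-1}\cup P^{2t})$, yields the displayed containments in $\mathbb D_{1+\alpha_0}$.

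Part (I) is the routine half: once the univalent branches are in place it is the same Schwarz-lemma bookkeeping as in Properties $(1)$–$(4)$ above. The main obstacle is Part (II): making rigorous the claim that the depths $0,1,\dots,q_{t+1}+q_t-1$ occurring along the orbit of $\varphi_S^{\comp2}(P^t)$ form a complete set of representatives, modulo the nesting $(e)$, for the $\mathbb S^1$-attached pull-backs of $P^t$, and checking that this range of depths covers the values of $s$ actually needed in the applications. This amounts to matching the indices $q_{t+2}-q_t+1,\dots,q_{t+1}+q_{t+2}$ against the combinatorics of the critical circle map $F_\alpha|_{\mathbb S^1}$, with the uniformity throughout supplied by the real \emph{a priori} bounds.
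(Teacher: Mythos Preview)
Your Part~2 argument is essentially the paper's: both identify the forward images $Q_m=F_\alpha^{\comp m}(\varphi_S^{\comp2}(P^t))$ with the unique $\mathbb S^1$--attached pull-backs of $(P^t)^\circ$ at depths $0,\dots,q_{t+1}+q_t-1$, note that their circle traces form a dynamical partition of $\mathbb S^1$, and then invoke property~$(e)$ for deeper pull-backs. So that half is fine.

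The gap is in Part~1. Your claim that ``every outer arc of every $Q_m$ has hyperbolic length $\preccurlyeq1$ in $\mathbb C\setminus\overline{\mathbb D}$'' is false, and this is not what properties~$(a)$--$(c)$ say. The arc $B_t$ has an endpoint $x_{q_t}\in\mathbb S^1$, so $l_{\mathbb C\setminus\overline{\mathbb D}}(B_t)=\infty$; property~$(a)$ only asserts $l_{\mathbb C^*_{J^t}}(B_t)\preccurlyeq1$ in the \emph{slit domain} $\mathbb C^*_{J^t}=(\mathbb C^*\setminus\mathbb S^1)\cup J^t$. The same obstruction occurs for the pull-backs of $B_t$ along the orbit: each $Q_m$ has a boundary arc touching $\mathbb S^1$ at the endpoint of its inner arc, and that arc has infinite $\mathbb C\setminus\overline{\mathbb D}$--length. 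Consequently your bridge ``an arc of bounded hyperbolic length in $\mathbb C\setminus\overline{\mathbb D}$ issuing from a point at Euclidean distance $\rho$ of $\mathbb S^1$ stays within distance $\preccurlyeq\rho$'' never applies to the part of $\partial Q_m$ that actually reaches $\mathbb S^1$, and the chain of inequalities leading to ${\rm diam}(Q_m)\preccurlyeq l(I_t)$ breaks.

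The paper handles this by working entirely in the slit domains. It sets $J=\,]x_{q_{t+1}},x_{q_t-q_{t+1}}[_{\mathbb S^1}$ and $J'=\,]x_{q_{t+2}},x_{q_{t+1}-q_{t+2}}[_{\mathbb S^1}$, and uses~$(a)$ together with the real bounds to get $\sup_{z\in P^t}d_{\mathbb C^*_J}(J,z)\preccurlyeq1$ and $\sup_{z\in\varphi_S(P^t)}d_{\mathbb C^*_{J'}}(J',z)\preccurlyeq1$. Pulling these back by the univalent branches of $F_\alpha^{\comp-s}$ (Schwarz) gives, for each $m$, a bound $\sup_{z\in Q_m}d_{\mathbb C^*_{J_m}}(J_m,z)\preccurlyeq1$ where $J_m$ is the appropriate pull-back arc on $\mathbb S^1$. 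Since $\max_m l(J_m)\to0$ as $t\to\infty$, this forces $Q_m\subset\mathbb D_{1+\alpha_0}$. The point is that the slit domain contains the relevant arc of $\mathbb S^1$, so the $B$--type boundary arc has finite (indeed bounded) length there; your choice of $\mathbb C\setminus\overline{\mathbb D}$ throws this away. If you rewrite Part~1 with the $\mathbb C^*_J$ metric in place of the $\mathbb C\setminus\overline{\mathbb D}$ metric, the argument goes through and coincides with the paper's.
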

\begin{proof}
Let
$$J:=]x_{q_{t+1}},x_{q_t-q_{t+1}}[_{\mathbb{S}^1},\ \mathbb{C}^*_J:=J\cup(\mathbb{C}^*\setminus\mathbb{S}^1),$$
$$J':=]x_{q_{t+2}},x_{q_{t+1}-q_{t+2}}[_{\mathbb{S}^1},\ \mathbb{C}^*_{J'}:=J'\cup(\mathbb{C}^*\setminus\mathbb{S}^1).$$
Then by the \'Swiatek-Herman real a priori bounds and (a), we have
$$\sup_{z\in P^t}d_{\mathbb{C}^*_J}(J,z)\preccurlyeq1.$$
For all $0\leq s\leq q_{t+1}-1$,
by the Schwarz lemma we have
$$\sup_{z\in F_{\alpha}^{\comp(q_{t+2}+q_{t+1}-s)}(\varphi_S^{\comp2}(P^t))}
d_{\mathbb{C}^*_{(F_{\alpha}|_{\mathbb{S}^1})^{\comp-s}(J)}}\left((F_{\alpha}|_{\mathbb{S}^1})^{\comp-s}(J),z\right)\preccurlyeq1.$$
Similarly, by the \'Swiatek-Herman real a priori bounds and (c), we have
$$\sup_{z\in \varphi_S(P^t)}d_{\mathbb{C}^*_{J'}}(J',z)\preccurlyeq1.$$
For all $q_{t+1}\leq s\leq q_{t+1}+q_t-1$,
by the Schwarz lemma we have
$$\sup_{z\in F_{\alpha}^{\comp(q_{t+2}+q_{t+1}-s)}(\varphi_S^{\comp2}(P^t))}
d_{\mathbb{C}^*_{(F_{\alpha}|_{\mathbb{S}^1})^{\comp(q_{t+1}-s)}(J')}}
((F_{\alpha}|_{\mathbb{S}^1})^{\comp(q_{t+1}-s)}(J'),z)\preccurlyeq1.$$
Observe
$$\lim_{t\to\infty}\max\{l((F_{\alpha}|_{\mathbb{S}^1})^{\comp-s}(J)):0\leq s\leq q_{t+1}-1\}=0$$
and
$$\lim_{t\to\infty}\max\{l((F_{\alpha}|_{\mathbb{S}^1})^{\comp(q_{t+1}-s)}(J')):q_{t+1}\leq s\leq q_{t+1}+q_t-1\}=0.$$
Thus $W\subseteq\mathbb{D}_{1+\alpha_0}$ for large enough $t$.

For all $0\leq s\leq q_{t+1}+q_t-1$, we set
$$I(F_{\alpha}^{\comp(q_{t+2}+q_{t+1}-s)}(\varphi_S^{\comp2}(P^t))):=
F_{\alpha}^{\comp(q_{t+2}+q_{t+1}-s)}(\varphi_S^{\comp2}(P^t))\cap\mathbb{S}^1.$$
Observe that
$${\rm II}:=\{I(F_{\alpha}^{\comp(q_{t+2}+q_{t+1}-s)}(\varphi_S^{\comp2}(P^t))):0\leq s\leq q_{t+1}+q_t-1\}$$
is a dynamical partition of $\mathbb{S}^1$. Then for all $0\leq s\leq q_{t+1}+q_t-1$,
each component $P$ of $F_{\alpha}^{\comp-s}((P^t)^{\comp})$, whose closure intersects $\mathbb{S}^1$, satisfies
$\overline{P}=F_{\alpha}^{\comp(q_{t+1}+q_t-s)}(\varphi_S^{\comp2}(P^t))$, and hence $P\subseteq W$;
for all $s\geq q_{t+1}+q_t$,
each component $P$ of $F_{\alpha}^{\comp-s}((P^t)^{\comp})$, whose closure intersects $\mathbb{S}^1$, satisfies that
$\overline{P}\cap\mathbb{S}^1$ is a proper subarc of some $I'\in{\rm II}$, and hence by (e), we have $P\subseteq W$.
\end{proof}

\section{Uniform bounds for area rate of measurable subsets in a square}
In this section we will give two main technique lemmas of this paper for area estimates,
which will be used in proofs of Theorems \ref{T1} and \ref{T1.2}.
The two main technique lemmas are as follows:
\begin{lemma}
\label{L1}Let $c>0$ and $0<\eta<1$ be two real numbers.
For any large enough positive integer $N$, we have:
for any square $S\subset\mathbb{C}$ with side length $l$ and any nonempty measurable subset $E$ of $S$,
if there exist $N$ mappings $r_1,r_2,\cdots,r_N$ from $E$ to $\mathbb{R}$
and $N$ mappings $y_n:E\to\mathbb{C}$, $1\leq n\leq N$
such that for all $x\in E$,
the following conditions hold:
\begin{itemize}
\item[{\rm(a)}] $0<r_1(x)\leq l\cdot\eta$,
\item[{\rm(b)}] $0<r_{n+1}(x)\leq\eta\cdot r_n(x)$ for all $1\leq n<N$,
\item[{\rm(c)}] $y_n(x)\in\mathbb{B}(x,c\cdot r_n(x))$ for all $1\leq n\leq N$,
\item[{\rm(d)}] $\mathbb{B}(y_n(x),r_n(x))\cap E=\emptyset$ for all $1\leq n\leq N$,
\end{itemize}
then
$${\rm area}(E)\leq\lambda^N\cdot{\rm area}(S),$$
where $0<\lambda<1$ is a constant only depending on $c$ and $\eta$.
\end{lemma}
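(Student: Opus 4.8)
The plan is to prove this by a Vitali-type covering/density argument with geometric decay at each of the $N$ nested scales. The intuition is that condition (d) says that at each point $x\in E$ and each scale $n$, there is a ball of radius $r_n(x)$ entirely avoiding $E$, whose center $y_n(x)$ is within a controlled factor $c$ of $x$; so a definite proportion of the ball $\mathbb{B}(x, (c+1) r_n(x))$ is free of $E$. Since $r_{n+1}(x) \le \eta r_n(x)$, these "holes" occur at $N$ well-separated scales, and each one should cost $E$ a multiplicative factor $\lambda < 1$ of the local mass. Carrying this through $N$ scales should give the bound $\operatorname{area}(E) \le \lambda^N \operatorname{area}(S)$.

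\medskip

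Concretely, I would argue by induction on $N$, proving the statement: if hypotheses (a)–(d) hold with $N$ scales, then $\operatorname{area}(E) \le \lambda^N \operatorname{area}(S)$ for a suitable $\lambda = \lambda(c,\eta) \in (0,1)$. For the base of the induction (one scale), I would use a Besicovitch/Vitali covering argument: the balls $\{\mathbb{B}(x, r_1(x)) : x \in E\}$ (or rather $\mathbb{B}(y_1(x), r_1(x))$) have bounded overlap; extract a countable subfamily of disjoint balls $\mathbb{B}(y_1(x_i), r_1(x_i))$ such that the enlarged balls $\mathbb{B}(x_i, (c+K) r_1(x_i))$ cover $E$ up to measure zero, where $K$ is a Vitali enlargement constant. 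Each enlarged ball has a definite fraction of its area (namely the area of $\mathbb{B}(y_1(x_i), r_1(x_i)) \subseteq \mathbb{B}(x_i, (c+1)r_1(x_i))$) disjoint from $E$, so $\operatorname{area}(E \cap \mathbb{B}(x_i, (c+K)r_1(x_i))) \le (1 - \delta) \operatorname{area}(\mathbb{B}(x_i, (c+K)r_1(x_i)))$ with $\delta = \delta(c,K) > 0$. Here the fact that $r_1(x) \le l\eta$ keeps the relevant balls comparable to $S$ so that the total comparison with $\operatorname{area}(S)$ is uniform. For the inductive step, I would localize: after the first scale's covering by balls $B_i$ of radius $\asymp r_1(x_i)$, restrict $E$ to each $B_i$ and rescale; inside $B_i$ the maps $r_2, \dots, r_{N}$ (shifted down by one index, and satisfying (a) with $l$ replaced by the radius of $B_i$ thanks to $r_2(x) \le \eta r_1(x)$) provide $N-1$ scales, so by the inductive hypothesis $\operatorname{area}(E \cap B_i) \le \lambda^{N-1} \operatorname{area}(B_i)$; summing over the bounded-overlap family and using the first-scale gain gives the factor $\lambda^N$.

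\medskip

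**The main obstacle** will be making the bookkeeping of constants uniform: the Vitali/Besicovitch enlargement constant, the overlap multiplicity, and the geometric loss $\delta$ all depend only on $c$ and $\eta$ (and dimension $2$), but one must be careful that when passing to scale $n+1$ inside a ball of scale $n$, the rescaled hypotheses still hold with the \emph{same} $c$ and $\eta$ — this is exactly why (b) is phrased multiplicatively and (c) is phrased as inclusion in $\mathbb{B}(x, c\, r_n(x))$ rather than an additive shift. A subtle point is that the centers $y_n(x)$ may wander outside $S$, and the balls $\mathbb{B}(x, c\,r_n(x))$ likewise, so one should either work in a slightly enlarged square or absorb the boundary effects into the constant; since $r_1 \le l\eta$ with $\eta < 1$ this is harmless. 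One also needs $N$ large enough that the accumulated loss beats the loss in the covering comparison (the "large enough $N$" in the statement absorbs the passage from $\operatorname{area}(E \cap B_i)$ summed over $i$ back to $\operatorname{area}(S)$, i.e. the enlargement-plus-overlap constant $C_0(c,\eta)$ is beaten once $\lambda^N C_0 < 1$, equivalently $N > \log C_0 / \log(1/\lambda)$). The rest — the explicit value of $\lambda$, the exact Besicovitch constant — is routine and I would not spell it out in detail.
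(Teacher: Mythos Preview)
Your high-level intuition is correct, but the inductive scheme has a real gap. In the inductive step you cover $E$ by Vitali/Besicovitch balls $B_i$ at scale $r_1$, invoke the hypothesis inside each, and get $\operatorname{area}(E)\le\lambda^{N-1}\sum_i\operatorname{area}(B_i)$; but $\sum_i\operatorname{area}(B_i)\le C_0\,\operatorname{area}(S)$ with an overlap/enlargement constant $C_0>1$, so you obtain only $C_0\lambda^{N-1}\operatorname{area}(S)$. To close the induction you would need $C_0\le\lambda<1$, which is impossible. Your parenthetical suggests $C_0$ is absorbed once by taking $N$ large, but in an induction $C_0$ reappears at \emph{every} step and compounds to $C_0^N$; and the ``first-scale gain'' $\delta$ is merely a second upper bound $\operatorname{area}(E\cap B_i)\le(1-\delta)\operatorname{area}(B_i)$ --- it does not multiply against the inductive bound. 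A related issue you skate over: for $x\in B_i$ you need $r_2(x)\lesssim\operatorname{radius}(B_i)$, i.e.\ $r_1(x)\lesssim r_1(x_i)$, which a generic Besicovitch selection does not guarantee, and which a Vitali selection guarantees only for the ball that caused $x$'s rejection, not for the enlarged ball that happens to cover $x$.

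The paper's argument is structurally different and does not induct on $N$. After thinning the scales so that consecutive ones differ by a large factor $M$, it performs for each scale $n$ an $M$-adic stopping-time decomposition of $S$ into ``admissible squares of generation $n$'': these are pairwise interior-disjoint and contained in $S$, so their union $S^n$ has area at most $\operatorname{area}(S)$ --- no overlap constant enters. One then has nested sets $S^1\supseteq S^2\supseteq\cdots\supseteq E$, and the paper constructs in each $S^n$ a ``free'' subset $F^n$ disjoint from $E$ with $\operatorname{area}(F^n)\ge\zeta\,\operatorname{area}(S^n)$. The genuinely delicate point, which your covering picture cannot see, is that the hole attached to an admissible square need not lie inside that square, and holes at different scales can collide; the paper handles this with a central-subsquare construction and proves the $F^n$ for different $n$ have at most two-fold mutual overlap. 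A doubling trick (working in $S\sqcup(S+2l)$) converts this into genuine disjointness, after which the exponential decay follows by iteratively excising the disjoint free sets from the nested $S^n$.
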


\begin{lemma}
\label{l2}Let $S\subset\mathbb{C}$ be a square with side length $l$ and $E$ be a nonempty measurable subset of $S$.
Assume $0<\lambda<1$ be a real number.
If there exists a map $r$ from $E$ to $\mathbb{R}$ such that for all $x\in E$
\begin{itemize}
\item $0<r(x)<l$,
\item ${\rm area}(\mathbb{B}(x,r(x))\cap E)\leq\lambda\cdot{\rm area}(\mathbb{B}(x,r(x)))$,
\end{itemize}
then we have
$${\rm area}(E)\leq c\cdot\lambda\cdot{\rm area}(S),$$
where $c>0$ is a universal constant.
\end{lemma}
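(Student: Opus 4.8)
The plan is to run a Besicovitch covering argument on the family of balls produced by the hypothesis. Consider the collection $\mathcal{B}=\{\overline{\mathbb{B}}(x,r(x)):x\in E\}$ of \emph{closed} balls; passing from the open balls $\mathbb{B}(x,r(x))$ to their closures changes neither their area nor the area of their intersection with $E$, and since $0<r(x)<l$ for every $x\in E$ the radii occurring in $\mathcal{B}$ are uniformly bounded. Hence the Besicovitch covering theorem in $\mathbb{R}^{2}$ applies: there is a universal integer $N$ and a countable subfamily $\{B_{i}=\overline{\mathbb{B}}(x_{i},r(x_{i}))\}_{i}$ of $\mathcal{B}$, with every $x_{i}\in E$, such that $E\subseteq\bigcup_{i}B_{i}$ and $\sum_{i}\mathbf{1}_{B_{i}}\le N$ pointwise on $\mathbb{C}$.

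Next I would combine two elementary estimates. On the one hand, since $E$ is measurable and $E=\bigcup_{i}(B_{i}\cap E)$, countable subadditivity together with the second hypothesis (applied at the center $x_{i}\in E$ with radius $r(x_{i})$) gives
$${\rm area}(E)\le\sum_{i}{\rm area}(B_{i}\cap E)\le\lambda\sum_{i}{\rm area}(B_{i}).$$
On the other hand, every $B_{i}$ is centered at a point of the square $S$ and has radius strictly less than $l$, so $B_{i}$ is contained in the disk $D$ concentric with $S$ of radius $2l$; integrating the multiplicity bound over $D$ then yields
$$\sum_{i}{\rm area}(B_{i})=\int_{D}\sum_{i}\mathbf{1}_{B_{i}}\,dA\le N\cdot{\rm area}(D)=4\pi N\cdot{\rm area}(S).$$
Combining the two displays gives ${\rm area}(E)\le 4\pi N\lambda\cdot{\rm area}(S)$, which is exactly the assertion with the universal constant $c=4\pi N$.

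This argument is essentially routine once the right tool is isolated; the only mild bookkeeping is the open-to-closed reduction and the geometric remark that every selected ball lies inside a fixed dilate of $S$, so that the total mass $\sum_{i}{\rm area}(B_{i})$ is controlled by ${\rm area}(S)$. The single substantive ingredient is the Besicovitch covering theorem itself, and if one preferred not to cite it, the main obstacle would be to prove directly a bounded-overlap covering statement for $\mathcal{B}$ by a greedy largest-ball-first selection — standard, though it takes a little space to set up.
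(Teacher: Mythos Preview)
Your argument is correct. The Besicovitch covering theorem applies because the centers lie in the bounded set $E\subset S$ (and the radii are bounded by $l$), the open-to-closed reduction is harmless, and the containment of every selected ball in the disk of radius $2l$ about the center of $S$ gives the required control on $\sum_i{\rm area}(B_i)$.

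Your route is genuinely different from the paper's. The paper does not invoke Besicovitch; instead it runs a dyadic stopping-time decomposition of $S$: it bisects repeatedly into sub-squares $S_{i_1\cdots i_k}$ of side $l/2^k$, declares $S_{i_1\cdots i_k}$ \emph{admissible} when $\sup_{x\in S_{i_1\cdots i_k}\cap E} r(x)$ first falls into the range $(\sqrt{2}\,l/2^k,\sqrt{2}\,l/2^{k-1}]$, and then shows (i) the admissible squares cover $E$, (ii) distinct admissible squares have disjoint interiors, and (iii) each admissible square is contained in a single ball $\mathbb{B}(x,r(x))$ with $x$ in that square, whence ${\rm area}(E\cap S_{i_1\cdots i_k})\le 8\pi\lambda\,{\rm area}(S_{i_1\cdots i_k})$. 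Summing over the admissible squares yields $c=8\pi$. Your approach is shorter and more conceptual, leaning on Besicovitch as a black box; the paper's approach is entirely self-contained (no external covering theorem) and even yields an explicit small constant, at the cost of a few pages of bookkeeping.
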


\noindent{\bf Proofs of two lemmas:}
\begin{proof}[The proof of Lemma \ref{L1}]
Let $n_0$ and $M$ (odd number) be two smallest positive integers such that
\begin{equation}
\label{F1}\eta^{n_0}<\frac{1}{M}<\frac{1}{5\sqrt{2}(c+2)}
\end{equation}
and let
$$N_0=\left\lfloor\frac{N}{n_0}\right\rfloor.$$
Next, we suppose $N\geq2n_0$ and hence $N_0\geq2$.
Then it follows from (a)(b)(c)(d) that
\begin{itemize}
\item[${\rm(\tilde{a})}$] $0<\tilde{r}_{n_0}(x)\leq\frac{l}{M}$,
\item[${\rm(\tilde{b})}$] $\tilde{r}_{n+1}(x)\leq\frac{1}{M}\cdot \tilde{r}_n(x)$ for all $1\leq n<N_0$,
\item[${\rm(\tilde{c})}$] $\tilde{y}_n(x)\in\mathbb{B}(x,c\cdot\tilde{r}_n(x))$ for all $1\leq n\leq N_0$,
\item[${\rm(\tilde{d})}$] $\mathbb{B}(\tilde{y}_n(x),\tilde{r}_n(x))\cap E=\emptyset$ for all $1\leq n\leq N_0$,
\end{itemize}
where $\tilde{r}_n:=r_{nn_0}$ and $\tilde{y}_n:=y_{nn_0}$ for all $1\leq n\leq N_0$.

We divide $S$ into $M^2$ small squares with side length $\frac{l}{M}$.
For all $1\leq i_1\leq M^2$, there exist unique two integers $p_1$ and $q_1$ such that $i_1=p_1M+q_1$ and $0<q_1\leq M$.
We use $S_{i_1}$ to denote the square in row $p_1+1$ and column $q_1$ in $S$.
By reduction, $S_{i_1i_2\cdots i_k}$ is defined as follows:
We divide $S_{i_1i_2\cdots i_{k-1}}$ into $M^2$ small squares with side length $\frac{l}{M^k}$.
Let $p_k$ and $q_k$ be the unique two integers such that $i_k=p_kM+q_k$ and $0<q_k\leq M$. Then
$S_{i_1i_2\cdots i_k}$ denotes the square in row $p_k+1$ and column $q_k$ in $S_{i_1i_2\cdots i_{k-1}}$.
For all $1\leq n\leq N_0$ and any square $S_{i_1i_2\cdots i_k}$ with $S_{i_1i_2\cdots i_k}\cap E\not=\emptyset$,
we set
$$R_{i_1i_2\cdots i_k}^n:=\sup_{x\in S_{i_1i_2\cdots i_k}\cap E}\tilde{r}_n(x).$$
Any square $S_{i_1i_2\cdots i_k}$ with $S_{i_1i_2\cdots i_k}\cap E\not=\emptyset$
is called \emph{an admissible square of generation $n\geq1$}
if
$$\sqrt{2}\frac{l}{M^{k+1}}<R_{i_1i_2\cdots i_k}^n\leq\sqrt{2}\frac{l}{M^k}\ {\rm and}\
R_{i_1i_2\cdots i_j}^n\leq\sqrt{2}\frac{l}{M^{j+1}},\ j=1,2,\cdots,k-1.$$
By ($\tilde{b}$), for any $1\leq m<n\leq N_0$, we have
\begin{equation}
\label{e3.2}R_{i_1i_2\cdots i_k}^n\leq \frac{1}{M^{n-m}}\cdot R_{i_1i_2\cdots i_k}^m.
\end{equation}
This implies that each admissible square corresponds to a unique generation.
Moreover, by definition it is easy to see that
for any two admissible squares  $S_{i_1i_2\cdots i_k}$ and $S_{i_1'i_2'\cdots i_{k'}'}$ of generation $n\geq1$,
one of the following two cases must occur:
\begin{itemize}
\item $S_{i_1i_2\cdots i_k}=S_{i_1'i_2'\cdots i_{k'}'}$;
\item $S_{i_1i_2\cdots i_k}\cap(S_{i_1'i_2'\cdots i_{k'}'})^{\circ}=\emptyset$.
\end{itemize}
For any admissible square $S_{i_1i_2\cdots i_k}$, we let $F_{i_1i_2\cdots i_k}$ denote a square
$S_{i_1'i_2'\cdots i_{k-1}'i_k'i_{k+1}'}$ such that
\begin{itemize}
\item[\rm{(a')}] $S_{i_1'i_2'\cdots i_{k-1}'i_k'i_{k+1}'}\cap E=\emptyset$,
\item[\rm(b')] the distance between $S_{i_1'i_2'\cdots i_{k-1}'i_k'i_{k+1}'}$ and $S_{i_1i_2\cdots i_k}$
is less than or equal to $\sqrt{2}(c+1)\frac{l}{M^k}$.
\end{itemize}
Such $F_{i_1i_2\cdots i_k}$ always exists.
In fact,
without loss of generality, we set $S_{i_1i_2\cdots i_k}$ is of generation $n\geq1$.
Since $S_{i_1i_2\cdots i_k}$ is an admissible square of generation $n$, we have
$\sqrt{2}\frac{l}{M^{k+1}}<R_{i_1i_2\cdots i_k}^n\leq\sqrt{2}\frac{l}{M^k}$ and hence
there exists $x\in S_{i_1i_2\cdots i_k}$ such that
$|\tilde{y}_n(x)-x|\leq c\cdot\tilde{r}_n(x)$, $\mathbb{B}(\tilde{y}_n(x),\tilde{r}_n(x))\cap E=\emptyset$
and $\sqrt{2}\frac{l}{M^{k+1}}<\tilde{r}_n(x)\leq\sqrt{2}\frac{l}{M^k}$.
Thus the ball $\mathbb{B}(\tilde{y}_n(x),\tilde{r}_n(x))$ contains at least a square of form
$S_{i_1'i_2'\cdots i_{k-1}'i_k'i_{k+1}'}$.
We take $F_{i_1i_2\cdots i_k}=S_{i_1'i_2'\cdots i_{k-1}'i_k'i_{k+1}'}$. Then (a') holds immediately.
Since the distance between $F_{i_1i_2\cdots i_k}$ and $S_{i_1i_2\cdots i_k}$
is less than or equal to $|\tilde{y}_n(x)-x|+\tilde{r}_n(x)\leq\sqrt{2}(c+1)\frac{l}{M^k}$,
(b') also holds. Thus $F_{i_1i_2\cdots i_k}$ always exists.

Let
$$S^n:=\cup\{{\rm all\ admissible\ squares\ of\ generation\ n}\},\ 1\leq n\leq N_0$$
and
$$F^n=F^{n1}\cup(F^{n2}\setminus F^{n3}),\ 1\leq n\leq N_0-1,$$
where
\begin{align*}
F^{n1}:=\cup\{&F_{i_1'i_2'\cdots i_{k'}'}:S_{i_1i_2\cdots i_k}\ {\rm is\ an\ admissible\ square\ of\ generation\ n}\
{\rm and}\\
&S_{i_1'i_2'\cdots i_{k'}'}(\subseteq S_{i_1i_2\cdots i_k\frac{M^2+1}{2}})\ {\rm is\ an\ admissible\ square\ of\ generation\ n+1}\},
\end{align*}
\begin{align*}
F^{n2}:=\cup\{S_{i_1i_2\cdots i_k\frac{M^2+1}{2}}:\ S_{i_1i_2\cdots i_k}\ {\rm is\ an\ admissible\ square\ of\ generation\ n}\}
\end{align*}
and
\begin{align*}
F^{n3}:=\cup\{&S_{i_1'i_2'\cdots i_{k'}'}:S_{i_1i_2\cdots i_k}\ {\rm is\ an\ admissible\ square\ of\ generation\ n}\
{\rm and}\\
&S_{i_1'i_2'\cdots i_{k'}'}(\subseteq S_{i_1i_2\cdots i_k\frac{M^2+1}{2}})\ {\rm is\ an\ admissible\ square\ of\ generation\ n+1}\}.
\end{align*}

About $S^n$, $F^n$ and $E$, we have the following properties:
\begin{itemize}
\item[(1)] $\{S^n\}_{n=1}^{N_0}$ is decreasing,
\item[(2)] for all $1\leq n\leq N_0$, $E\subseteq S^n$,
\item[(3)] for all $1\leq n\leq N_0-1$, $F^n\cap E=\emptyset$ and $F^n\subseteq(S^n)^{\circ}$,
\item[(4)] for all $x\in\cup_{n=1}^{N_0-1}F^n$, at most two of $F^1,F^2,\cdots,F^{N_0-1}$ contain $x$,
\item[(5)] for all $1\leq n\leq N_0-1$,
$$\zeta\cdot{\rm area}(S^n)\leq{\rm area}(F^n)\leq {\rm area}(S^n),$$
where $\zeta:=\frac{1}{2\pi\cdot M^2\cdot(\frac{\sqrt{2}}{2}+\sqrt{2}(c+2)M)^2}$.
\end{itemize}
The proof of these properties is postponed to Appendix A at the end of this paper.
Now we apply these properties to complete the proof of this lemma.

For any measurable subset $U$ of $S$, we set
$$U+2l:=\{x+2l:x\in U\}$$
and evidently,
$$U\cap(U+2l)=\emptyset.$$
We define
$$\tilde{S}=S\cup(S+2l),$$
$$\tilde{E}=E\cup(E+2l)$$
and for all $1\leq n\leq N_0$,
$$\tilde{S}^n=S^n\cup(S^n+2l).$$
Next, we define $\tilde{F}_n$ as follows:
firstly, define $\tilde{F}_1=F^1$ and then by reduction,
for any $2\leq n\leq N_0-1$, we define
$$\tilde{F}_n=F_n^{(1)}\cup F_n^{(2)},$$
where
$$F_n^{(1)}=F^n\setminus(\cup_{j=1}^{n-1}\tilde{F}_j)$$
and
$$F_n^{(2)}=(F^n\setminus F_n^{(1)})+2l.$$
By (1)(2)(3)(4)(5), we have the following corresponding properties:
\begin{itemize}
\item[(1')] $\{\tilde{S}^n\}_{n=1}^{N_0}$ is decreasing,
\item[(2')] for all $1\leq n\leq N_0$, $\tilde{E}\subseteq\tilde{S}^n$,
\item[(3')] for all $1\leq n\leq N_0-1$, $\tilde{F}_n\subseteq\tilde{S}^n$ and $\tilde{F}_n\cap\tilde{E}=\emptyset$,
\item[(4')] for all $1\leq i<j\leq N_0-1$, $\tilde{F}_i\cap\tilde{F}_j=\emptyset$,
\item[(5')] for all $1\leq n\leq N_0-1$,
$$\frac{\zeta}{2}\cdot{\rm area}(\tilde{S}^n)\leq{\rm area}(\tilde{F}_n)\leq {\rm area}(\tilde{S}^n).$$
\end{itemize}
It follows from (1'), (3'), (4') and (5') that
for all $2\leq n\leq N_0-1$, we have
\begin{align*}
{\rm area}(\tilde{S}^n\setminus(\cup_{j=1}^n\tilde{F}_j))&={\rm area}((\tilde{S}^n\setminus(\cup_{j=1}^{n-1}\tilde{F}_j))\setminus\tilde{F}_n)\\
&\leq(1-\frac{\zeta}{2})\cdot{\rm area}(\tilde{S}^n\setminus(\cup_{j=1}^{n-1}\tilde{F}_j))\\
&\leq(1-\frac{\zeta}{2})\cdot{\rm area}(\tilde{S}^{n-1}\setminus(\cup_{j=1}^{n-1}\tilde{F}_j)).
\end{align*}
Then
$${\rm area}(\tilde{S}^{N_0-1}\setminus(\cup_{j=1}^{N_0-1}\tilde{F}_j))\leq
(1-\frac{\zeta}{2})^{N_0-2}{\rm area}(\tilde{S}^1\setminus\tilde{F}_1)\leq
(1-\frac{\zeta}{2})^{N_0-1}\cdot{\rm area}(\tilde{S}).$$
By (2') and (3'), we have
$${\rm area}(\tilde{E})\leq{\rm area}(\tilde{S}^{N_0-1}\setminus(\cup_{j=1}^{N_0-1}\tilde{F}_j))$$
and hence
\begin{align*}
{\rm area}(\tilde{E})&\leq(1-\frac{\zeta}{2})^{N_0-1}\cdot{\rm area}(\tilde{S})\\
&=(1-\frac{\zeta}{2})^{\left\lfloor\frac{N}{n_0}\right\rfloor-1}\cdot{\rm area}(\tilde{S})\\
&\leq((1-\frac{\zeta}{2})^{\frac{1}{n_0}-\frac{2}{N}})^N\cdot{\rm area}(\tilde{S}).
\end{align*}
Since
$${\rm area}(\tilde{E})=2\cdot{\rm area}(E)\ {\rm and}\ {\rm area}(\tilde{S})=2\cdot{\rm area}(S),$$
we have
\begin{align*}
{\rm area}(E)\leq((1-\frac{\zeta}{2})^{\frac{1}{n_0}-\frac{2}{N}})^N\cdot{\rm area}(S)
\end{align*}
and hence the proof is completed by taking $\lambda=(1-\frac{\zeta}{2})^{\frac{1}{2n_0}}$.

\end{proof}

\begin{proof}[The proof of Lemma \ref{l2}]
We divide $S$ into $4$ small squares with side length $\frac{l}{2}$.
For all $1\leq i_1\leq 4$, there exist unique two integers $p_1$ and $q_1$ such that $i_1=2p_1+q_1$ and $0<q_1\leq2$.
We use $S_{i_1}$ to denote the square in row $p_1+1$ and column $q_1$ in $S$.
By reduction, $S_{i_1i_2\cdots i_k}$ is defined as follows:
We divide $S_{i_1i_2\cdots i_{k-1}}$ into $4$ small squares with side length $\frac{l}{2^k}$.
Let $p_k$ and $q_k$ be the unique two integers such that $i_k=2p_k+q_k$ and $0<q_k\leq2$. Then
$S_{i_1i_2\cdots i_k}$ denotes the square in row $p_k+1$ and column $q_k$ in $S_{i_1i_2\cdots i_{k-1}}$.
For any square $S_{i_1i_2\cdots i_k}$ with $S_{i_1i_2\cdots i_k}\cap E\not=\emptyset$, we set
$$R_{i_1i_2\cdots i_k}:=\sup_{x\in S_{i_1i_2\cdots i_k}\cap E}r(x).$$
Any square $S_{i_1i_2\cdots i_k}$ with $S_{i_1i_2\cdots i_k}\cap E\not=\emptyset$
is called an admissible square if
$$\sqrt{2}\frac{l}{2^k}<R_{i_1i_2\cdots i_k}\leq\sqrt{2}\frac{l}{2^{k-1}}\ {\rm and}\
R_{i_1i_2\cdots i_j}\leq\sqrt{2}\frac{l}{2^j},\ j=1,2,\cdots,k-1.$$
Let $AS$ denote the union of all admissible squares.
To complete the proof, it is sufficient to prove the following three properties:
\begin{itemize}
\item[(1)] $E\subseteq AS$;
\item[(2)] for any two admissible squares $S_{i_1i_2\cdots i_k}$ and $S_{i_1'i_2'\cdots i_{k'}'}$,
if
$$S_{i_1i_2\cdots i_k}\not=S_{i_1'i_2'\cdots i_{k'}'},$$
then
$$S_{i_1i_2\cdots i_k}\cap(S_{i_1'i_2'\cdots i_{k'}'})^{\circ}=\emptyset;$$
\item[(3)] there exists a universal constant $c>0$ such that for any admissible square $S_{i_1i_2\cdots i_k}$,
$${\rm area}(S_{i_1i_2\cdots i_k}\cap E)\leq c\cdot\lambda\cdot{\rm area}(S_{i_1i_2\cdots i_k}).$$
\end{itemize}
The proof of (1) is similar to that of (2) in the proof of Appendix $A$.
In fact, for any $x\in E$, we let $k$ be a positive integer such that $r(x)>\sqrt{2}\frac{l}{2^k}$
and $S_{i_1i_2\cdots i_k}$ be a square containing $x$.
Then $R_{i_1i_2\cdots i_k}>\sqrt{2}\frac{l}{2^k}$.
Next, we let $\tilde{k}$ be the smallest positive integer such that
\begin{equation}
\label{F5.3}R_{i_1i_2\cdots i_{\tilde{k}}}>\sqrt{2}\frac{l}{2^{\tilde{k}}}.
\end{equation}
If $\tilde{k}=1$, we have
$R_{i_1}>\sqrt{2}\frac{l}{2}$.
Again since $R_{i_1}\leq l<\sqrt{2}\cdot l$, we have that
$S_{i_1}$ is an admissible square and hence $x\in AS$.
If $\tilde{k}>1$, then
\begin{equation}
\label{F5.2}R_{i_1i_2\cdots i_j}\leq\sqrt{2}\frac{l}{2^j}
\end{equation}
for $j=1,2,\cdots \tilde{k}-1$.
In particular,
$R_{i_1i_2\cdots i_{\tilde{k}-1}}\leq\sqrt{2}\frac{l}{2^{\tilde{k}-1}}$.
Then we have
\begin{equation}
\label{F5.1}R_{i_1i_2\cdots i_{\tilde{k}}}\leq R_{i_1i_2\cdots i_{\tilde{k}-1}}\leq\sqrt{2}\frac{l}{2^{\tilde{k}-1}}.
\end{equation}
Combining (\ref{F5.3}), (\ref{F5.2}) and (\ref{F5.1}), we have that
$S_{i_1i_2\cdots i_{\tilde{k}}}$ is an admissible square and hence $x\in AS$.
Thus $E\subseteq AS$.

The proof of (2): for any two admissible squares $S_{i_1i_2\cdots i_k}$ and $S_{i_1'i_2'\cdots i_{k'}'}$,
if
$$S_{i_1i_2\cdots i_k}\not=S_{i_1'i_2'\cdots i_{k'}'},$$
then the following three cases may occur:
\begin{itemize}
\item $S_{i_1i_2\cdots i_k}\cap(S_{i_1'i_2'\cdots i_{k'}'})^{\circ}=\emptyset;$
\item $S_{i_1i_2\cdots i_k}\subseteq S_{i_1'i_2'\cdots i_{k'}'};$
\item $S_{i_1i_2\cdots i_k}\supseteq S_{i_1'i_2'\cdots i_{k'}'}.$
\end{itemize}
To complete the proof of (2), we only need to prove the last two cases can't occur.
Assume $S_{i_1i_2\cdots i_k}\subseteq S_{i_1'i_2'\cdots i_{k'}'}$.
Then $k'<k$, $i_1=i_1', i_2=i_2',\cdots, i_{k'}=i_{k'}'$.
Since $S_{i_1i_2\cdots i_k}$ is an admissible square, we have that
$$\sqrt{2}\frac{l}{2^k}<R_{i_1i_2\cdots i_k}\leq\sqrt{2}\frac{l}{2^{k-1}}\ {\rm and}\
R_{i_1i_2\cdots i_j}\leq\sqrt{2}\frac{l}{2^j},\ j=1,2,\cdots,k-1.$$
In particular,
$$R_{i_1i_2\cdots i_{k'}}\leq\sqrt{2}\frac{l}{2^{k'}}.$$
This implies $S_{i_1i_2\cdots i_{k'}}$ is not admissible, which contradicts the original condition.
Thus $S_{i_1i_2\cdots i_k}\subseteq S_{i_1'i_2'\cdots i_{k'}'}$ can't occur.
Similarly, one can prove that $S_{i_1i_2\cdots i_k}\supseteq S_{i_1'i_2'\cdots i_{k'}'}$ can't occur.

The proof of (3): for any admissible square $S_{i_1i_2\cdots i_k}$,
by definition we have
$$\sqrt{2}\frac{l}{2^k}<R_{i_1i_2\cdots i_k}\leq\sqrt{2}\frac{l}{2^{k-1}}.$$
Then there exists $x\in S_{i_1i_2\cdots i_k}$ such that
$$\sqrt{2}\frac{l}{2^k}<r(x)\leq\sqrt{2}\frac{l}{2^{k-1}}$$
and hence $S_{i_1i_2\cdots i_k}\subseteq\mathbb{B}(x,r(x))$.
Since
${\rm area}(\mathbb{B}(x,r(x))\cap E)\leq\lambda\cdot{\rm area}(\mathbb{B}(x,r(x)))$,
this implies
\begin{align*}
{\rm area}(S_{i_1i_2\cdots i_k}\cap E)&\leq
\lambda\cdot{\rm area}(\mathbb{B}(x,r(x)))\\
&\leq8\pi\lambda\cdot{\rm area}(S_{i_1i_2\cdots i_k}).
\end{align*}
Thus the proof is completed by taking $c=8\pi$.

\end{proof}

\section{David homeomorphisms}
Let $\mu=\mu(z)d\overline{z}/dz$ be a Beltrami differential in a planar domain $\Omega$, that is
a measurable (-1,1)-form with $|\mu(z)|<1$ almost everywhere in $\Omega$.
The Beltrami differential $\mu$ is called a David-Beltrami differential if
there exist constants $M>0, m>0$ and $0<\epsilon_0<1$ such that
\begin{equation}
\label{e3.1a}{\rm area}\{z\in\Omega: |\mu(z)|>1-\epsilon\}\leq M\cdot e^{-\frac{m}{\epsilon}}
\end{equation}
for all $0<\epsilon<\epsilon_0$.
By the classical Ahlfors-Bers-Morry theorem,
quasiconformal mappings arise as the solutions of
the Beltrami equation $\overline{\partial}\varphi=\mu\cdot\partial\varphi$ if $||\mu||_{\infty}<1$.
In \cite{Da} David proved the analogue of the Ahlfors-Bers-Morry theorem \cite{AB} holds
for the class of David-Beltrami differentials:

\begin{theorem}[David]
\label{TD1}Let $\Omega$ be a domain in $\mathbb{C}$ and $\mu$ be a David-Beltrami differential in $\Omega$.
Then there exists an orientation-preserving homeomorphism $\varphi:\Omega\to\Omega'$ in $W^{1,1}_{loc}(\Omega)$
which satisfies $\overline{\partial}\varphi= \mu\cdot\partial\varphi$ almost everywhere.
Moreover, $\varphi$ is unique up to postcomposition with a conformal map.
In other words, if $\psi:\Omega\to\Omega''$ is another homeomorphic
solution of the same Beltrami equation in $W^{1,1}_{loc}(\Omega)$,
then $\psi\comp\varphi^{\comp-1}:\Omega'\to\Omega''$ is a conformal map.
\end{theorem}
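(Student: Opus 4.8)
The plan is to realize $\varphi$ as a locally uniform limit of quasiconformal approximants, check that the limit solves the Beltrami equation, and deduce uniqueness from a composition argument. Extending $\mu$ by zero we may assume $\Omega=\hat{\mathbb{C}}$ and restrict back at the end; write $K_\mu:=\frac{1+|\mu|}{1-|\mu|}$, so that the David condition (\ref{e3.1a}) is equivalent to local integrability of $e^{pK_\mu}$ for all small $p>0$, and this integrability holds \emph{uniformly} for the truncations $\mu_n:=\mu\cdot\chi_{\{|\mu|\le 1-1/n\}}$. Since $\|\mu_n\|_\infty\le 1-1/n<1$, the classical Ahlfors--Bers theorem yields quasiconformal homeomorphisms $\varphi_n:\hat{\mathbb{C}}\to\hat{\mathbb{C}}$ with $\overline{\partial}\varphi_n=\mu_n\,\partial\varphi_n$ a.e., which we normalize by fixing three prescribed points.

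The analytic heart is an a priori modulus of continuity for $\{\varphi_n\}$ depending only on $M,m,\epsilon_0$. One obtains it from distortion estimates for moduli of round annuli under maps of exponentially integrable distortion, via the length--area method; these give equicontinuity of $\{\varphi_n\}$ on compacta, with the weak modulus of continuity characteristic of David maps (much weaker than the H\"older continuity available in the quasiconformal case). Since the area condition (\ref{e3.1a}) is invariant under the involution sending $\mu$ to the Beltrami coefficient of the inverse map, the inverse family $\{\varphi_n^{\comp-1}\}$ is equicontinuous as well. By Arzel\`a--Ascoli one passes to a subsequence with $\varphi_n\to\varphi$ and $\varphi_n^{\comp-1}\to\psi$ locally uniformly; equicontinuity of both families forces $\psi=\varphi^{\comp-1}$, so $\varphi$ is a homeomorphism and no degeneration occurs.

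To see $\varphi\in W^{1,1}_{loc}(\Omega)$ and $\overline{\partial}\varphi=\mu\,\partial\varphi$ a.e., one shows the derivatives $\partial\varphi_n,\overline{\partial}\varphi_n$ are locally equi-integrable: from $|\nabla\varphi_n|^2\le K_{\mu_n}J_{\varphi_n}$, the change-of-variables identity $\int_E J_{\varphi_n}={\rm area}(\varphi_n(E))$, H\"older's inequality, the uniform local integrability of $K_{\mu_n}$, and the area-distortion control coming from the modulus of continuity, one bounds $\int_E|\nabla\varphi_n|$ uniformly in terms of ${\rm area}(E)$. Equi-integrability yields weak $L^1_{loc}$ subsequential limits of the derivatives, which the locally uniform convergence $\varphi_n\to\varphi$ identifies with the distributional derivatives of $\varphi$; since $\mu_n\to\mu$ a.e.\ with $|\mu_n|\le1$, one passes to the limit in $\overline{\partial}\varphi_n-\mu_n\,\partial\varphi_n=0$ tested against $C_c^\infty$ functions to obtain $\overline{\partial}\varphi=\mu\,\partial\varphi$ a.e.

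For uniqueness, given another $W^{1,1}_{loc}$ homeomorphic solution $\psi:\Omega\to\Omega''$ one wants $g:=\psi\comp\varphi^{\comp-1}$ conformal. The main obstacle throughout — and the place where the David hypothesis is genuinely indispensable, as opposed to merely $K_\mu<\infty$ a.e.\ — is that the chain rule, Lusin's property $(N)$, and absolute continuity on almost every line are not automatic for non-quasiconformal $W^{1,1}_{loc}$ homeomorphisms; these must first be established for David solutions before $g$ can even be differentiated correctly. Granting this, $g$ is a $W^{1,1}_{loc}$ homeomorphism with $\overline{\partial}g=0$ a.e., and a Weyl-type lemma in this regularity class (again leaning on the David estimates) shows $g$ is holomorphic, hence, being a homeomorphism, conformal. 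Therefore $\psi=g\comp\varphi$ with $g$ conformal, which is the asserted uniqueness.
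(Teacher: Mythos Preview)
The paper does not prove Theorem~\ref{TD1}; it is simply quoted from David's original work \cite{Da} and used as a black box. So there is no ``paper's own proof'' to compare against. Your outline is, in broad strokes, the standard approximation scheme one finds in \cite{Da} and later expositions.

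That said, there is one concrete error in your sketch. You claim that ``the area condition (\ref{e3.1a}) is invariant under the involution sending $\mu$ to the Beltrami coefficient of the inverse map,'' and use this to deduce equicontinuity of $\{\varphi_n^{\comp-1}\}$. This is false, and the paper even warns against it just after the statement: ``the inverse of a David homeomorphism is not necessarily David.'' The reason is that
\[
\{w:|\mu_{\varphi^{-1}}(w)|>1-\epsilon\}=\varphi\bigl(\{z:|\mu_{\varphi}(z)|>1-\epsilon\}\bigr),
\]
so controlling its area requires area--distortion bounds for $\varphi$, and for David maps these are only logarithmic (cf.\ Theorem~\ref{TD2}), nowhere near strong enough to recover an exponential tail. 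The correct route to equicontinuity of the inverses is the two--sided modulus--of--continuity estimate: David's length--area argument in fact produces a \emph{lower} bound $|\varphi_n(z_1)-\varphi_n(z_2)|\ge \omega(|z_1-z_2|)$ with $\omega$ depending only on $M,m,\epsilon_0$ (this is exactly the second inequality in Theorem~\ref{TD2}), and that is what prevents collapse and makes the limit a homeomorphism.

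The uniqueness paragraph correctly flags the real difficulties (chain rule, Lusin~$(N)$, ACL for $W^{1,1}_{loc}$ homeomorphisms in this non--quasiconformal setting) but then bypasses them with ``Granting this''; since those verifications are where the David hypothesis does the work, this is more of a to--do list than a proof.
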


Solutions of the Beltrami equation given by this theorem are called David
homeomorphisms. If $\varphi:\Omega\to\Omega'$ is a David homeomorphism, then
$\partial\varphi\not=0$ almost everywhere in $\Omega$. Thus the complex dilatation of $\varphi$, defined by
the measurable (-1, 1)-form
$$\mu_{\varphi}:=\frac{\varphi_{\overline{z}}}{\varphi_z}\cdot\frac{d\overline{z}}{dz}$$
is a well-defined David-Beltrami differential and
$\varphi$ is exactly one solution of the equation $\overline{\partial}\varphi=\mu_{\varphi}\cdot\partial\varphi$.
As a generalization of classical quasiconformal maps,
David homeomorphisms enjoy some convenient properties of quasiconformal maps such as compactness (see \cite{T}).
However, they differ also in many respects.
A significant example is the fact that the inverse of a David homeomorphism is not necessarily David.
About David homeomorphisms, we also need the following result\cite{Da}:
\begin{theorem}[David]
\label{TD2}Let $\varphi:\hat{\mathbb{C}}\to\hat{\mathbb{C}}$ be a David homeomorphism fixing $0,1,\infty$
with $\mu_{\varphi}$ of form {\rm(\ref{e3.1a})}.
Then for all $r>0$ and any measurable set $E\subseteq\mathbb{D}_r,$ we have
$${\rm area}(\varphi(E))\leq C(r,M,m,\epsilon_0)\cdot\left(\log\left(1+\frac{1}{{\rm area}(E)}\right)\right)^{-mc},$$
where $c>0$ is universal and $C(r,M,m,\epsilon_0)$ depends only on $r,M,m,\epsilon_0$.
Moreover, for any $z_1,z_2\in\mathbb{D}_r$, we have
$$|\varphi(z_1)-\varphi(z_2)|\geq\frac{1}{C'(r,M,m,\epsilon_0)}\cdot e^{-\frac{m}{c'}\log^2(2+\frac{1}{|z_1-z_2|})},$$
where $c'>0$ is universal and $C'(r,M,m,\epsilon_0)>0$ depends only on $r,M,m,\epsilon_0$.
\end{theorem}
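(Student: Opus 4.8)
These two inequalities are due to David (\cite{Da}); any proof rests on the theory of mappings of finite distortion, and here is the route I would take assuming only Theorem \ref{TD1}. The first step is to rephrase the hypothesis on $\mu=\mu_\varphi$. Writing $K_\varphi(z)=\frac{1+|\mu(z)|}{1-|\mu(z)|}$ for the distortion, the David condition (\ref{e3.1a}) is equivalent to the exponential integrability $\int_{\mathbb{D}_{2r}}e^{\beta K_\varphi}\,dA\le C_1(r,M,m,\epsilon_0)$ for every $0<\beta<m/2$, which one sees by rewriting the integral through the distribution function of $K_\varphi$ and inserting (\ref{e3.1a}) with $\epsilon=\tfrac{2}{\lambda+1}$. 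I would also fix, once and for all, the a priori normalization bounds: since the family of David homeomorphisms of $\hat{\mathbb{C}}$ fixing $0,1,\infty$ with fixed constants $M,m,\epsilon_0$ is compact, with limits of the same type (the compactness of David homeomorphisms, see \cite{T}), there are $0<\rho_1<\rho_2<\infty$ and $d_0>0$, depending only on $r,M,m,\epsilon_0$, with $\mathbb{D}_{\rho_1}\subseteq\varphi(\mathbb{D}_r)\subseteq\mathbb{D}_{\rho_2}$ and $\mathrm{dist}\big(\varphi(z),\partial\varphi(\mathbb{D}_{2r})\big)\ge d_0$ for all $z\in\mathbb{D}_r$. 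After this reduction every implicit constant below is allowed to depend on $r,M,m,\epsilon_0$ (but on nothing else), and everything happens on a fixed compact part of the plane.

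For the area estimate I would invoke the sharp higher-integrability theorem for maps of exponentially integrable distortion: such a $\varphi$ lies in the weighted Sobolev class $|D\varphi|^2\big(\log(e+|D\varphi|^2)\big)^{q}\in L^1(\mathbb{D}_r)$ for some exponent $q\asymp m$, with norm $\Lambda=\Lambda(r,M,m,\epsilon_0)$, and in addition $\varphi$ satisfies Lusin's condition (N), so that $\mathrm{area}(\varphi(E))=\int_E J_\varphi\,dA$. Granting this, write $\delta=\mathrm{area}(E)$; then, using $J_\varphi\le|D\varphi|^2$, splitting $E$ along $\{|D\varphi|^2\le T\}$ and its complement, and applying Chebyshev to the tail,
\[\mathrm{area}(\varphi(E))=\int_E J_\varphi\,dA\ \le\ \int_E|D\varphi|^2\,dA\ \le\ T\,\delta+\frac{\Lambda}{\big(\log(e+T)\big)^{q}}.\]
Taking $T=\delta^{-1/2}$ gives $\mathrm{area}(\varphi(E))\lesssim\Lambda\big(\log(1/\delta)\big)^{-q}$, which is the first claimed inequality (the ``$1+$'' inside the logarithm only serving to make the statement true in the trivial regime where $\delta$ is not small, and the universal constant $c$ being the one implicit in $q\asymp m$).

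For the distance estimate, fix $z_1,z_2\in\mathbb{D}_r$ and set $s=|\varphi(z_1)-\varphi(z_2)|$; the inequality is trivial unless $s$ is small, so assume so. The round ring $A=\{w:s<|w-\varphi(z_1)|<d_0\}$ lies in $\varphi(\mathbb{D}_{2r})$ by the normalization bounds and separates $\{\varphi(z_1),\varphi(z_2)\}$ from $\partial\varphi(\mathbb{D}_{2r})$, so $R:=\varphi^{-1}(A)\subseteq\mathbb{D}_{2r}$ is a ring separating $\{z_1,z_2\}$ from $\partial\mathbb{D}_{2r}$. Its bounded complementary component has diameter $\ge|z_1-z_2|$ while its unbounded one reaches $\partial\mathbb{D}_{2r}$ at distance $\le 3r$ from $z_1$, so Teichm\"uller's modulus estimate gives $\mathrm{mod}(R)\lesssim\log\big(1/|z_1-z_2|\big)$. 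On the other hand one must compare $\mathrm{mod}(R)$ with $\mathrm{mod}(\varphi(R))=\mathrm{mod}(A)=\tfrac{1}{2\pi}\log(d_0/s)$ through a modulus (length--area) inequality for $\varphi$; because $K_\varphi$ is only exponentially integrable rather than bounded, the loss in this comparison is a factor that is quadratic in the relevant logarithm, not the bounded factor $K$ of the quasiconformal case, giving $\log(1/s)\lesssim\big(\log(1/|z_1-z_2|)\big)^2$ and hence the asserted lower bound for $|\varphi(z_1)-\varphi(z_2)|$.

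The elementary ingredients above --- the distribution-function rewriting, the Chebyshev splitting, the Teichm\"uller estimate, the normalization bounds --- are routine. The real content, and the place where the full strength of the David condition is spent, is a pair of inputs from the theory of mappings of finite distortion: (i) the weighted $L^2$ Sobolev regularity of $\varphi$ with logarithmic exponent proportional to $m$, together with Lusin's condition (N); and (ii) the quadratic (rather than linear) modulus/length--area inequality governing how $\varphi$ distorts the modulus of a round ring. These are exactly the points at which David homeomorphisms behave strictly worse than quasiconformal maps --- power-law area distortion degenerating to merely logarithmic, linear modulus distortion to quadratic --- and carrying out self-contained proofs of them (a Caccioppoli / reverse-H\"older argument with an Orlicz-type gain for (i); a careful length--area estimate tracking the superlevel sets of $K_\varphi$ for (ii)) is the bulk of the work, and precisely the portion of David's theory that the present paper is content to cite.
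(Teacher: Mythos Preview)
The paper does not prove Theorem~\ref{TD2}; it is stated as a result of David and simply cited from \cite{Da}. There is therefore no ``paper's own proof'' to compare against. You recognize this yourself in your final paragraph, and your proposal is best read not as an alternative proof but as an expository sketch of the ingredients behind David's theorem.

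As such a sketch, your outline is reasonable and hits the right structural points: the equivalence of the area condition~(\ref{e3.1a}) with exponential integrability of the distortion $K_\varphi$; compactness of normalized David maps to secure a priori bounds; the higher-integrability/Lusin-(N) input followed by a level-set splitting for the area inequality; and a modulus argument with the quadratic (rather than linear) loss for the distance inequality. You are also honest that the two substantive analytic facts---the weighted Sobolev regularity with logarithmic exponent $\asymp m$, and the modulus distortion inequality for exponentially integrable distortion---are precisely what one must extract from \cite{Da}, and that the present paper is content to invoke them rather than reproduce them. Since the paper offers no proof of its own here, there is nothing further to compare.
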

The following corollary follows easily from Theorems \ref{TD1} and \ref{TD2}:
\begin{corollary}
\label{c4.1}Let $\varphi:\mathbb{B}(1,1)\to\mathbb{B}(1,1)$ be a David homeomorphism fixing $1$
with $\mu_{\varphi}$ of form {\rm(\ref{e3.1a})}.
Then for any measurable set $E\subseteq\mathbb{B}(1,1),$ we have
$${\rm area}(\varphi(E))\leq C(M,m,\epsilon_0)\cdot\left(\log\left(1+\frac{1}{{\rm area}(E)}\right)\right)^{-mc},$$
where $c>0$ is universal and $C(M,m,\epsilon_0)$ depends only on $M,m,\epsilon_0$.
Moreover, for any $z_1,z_2\in\mathbb{B}(1,1)$, we have
$$|\varphi(z_1)-\varphi(z_2)|\geq\frac{1}{C'(M,m,\epsilon_0)}\cdot e^{-\frac{m}{c'}\log^2(2+\frac{1}{|z_1-z_2|})},$$
where $c'>0$ is universal and $C'(M,m,\epsilon_0)>0$ depends only on $M,m,\epsilon_0$.
\end{corollary}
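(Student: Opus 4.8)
The plan is to deduce both inequalities from the global Theorems \ref{TD1} and \ref{TD2} by extending $\mu_\varphi$ to the whole sphere. First, extend $\mu_\varphi$ by $0$ on $\hat{\mathbb{C}}\setminus\mathbb{B}(1,1)$; this operation only removes points from the sets $\{|\mu|>1-\epsilon\}$, so the resulting Beltrami differential $\hat\mu$ on $\hat{\mathbb{C}}$ still satisfies {\rm(\ref{e3.1a})} with the same constants $M,m,\epsilon_0$. By Theorem \ref{TD1} there is a David homeomorphism $\Phi$ of $\hat{\mathbb{C}}$ with $\mu_\Phi=\hat\mu$; after postcomposing with a M\"obius transformation we may assume $\Phi$ fixes $0,1,\infty$. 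Since $\hat\mu\equiv 0$ on $\hat{\mathbb{C}}\setminus\overline{\mathbb{B}(1,1)}$, the restriction $\Phi|_{\hat{\mathbb{C}}\setminus\overline{\mathbb{B}(1,1)}}$ is conformal. By the uniqueness part of Theorem \ref{TD1} applied on $\mathbb{B}(1,1)$, the homeomorphisms $\Phi|_{\mathbb{B}(1,1)}$ and $\varphi$ solve the same Beltrami equation, hence differ by postcomposition with a conformal map: $\Phi|_{\mathbb{B}(1,1)}=g\comp\varphi$ for a conformal $g:\mathbb{B}(1,1)\to\Omega:=\Phi(\mathbb{B}(1,1))$, and $g(1)=\Phi(\varphi^{\comp-1}(1))=\Phi(1)=1$ because $\varphi(1)=1$.

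The heart of the argument is to show that $g$ has distortion bounded in terms of $M,m,\epsilon_0$ only, i.e. that the global extension $\Phi$ does not distort $\mathbb{B}(1,1)$ too badly. For the outer bound, $\Phi|_{\hat{\mathbb{C}}\setminus\overline{\mathbb{B}(1,1)}}$ is a univalent map of the exterior disk fixing $\infty$, and Theorem \ref{TD2} applied to $\Phi$ (with the full ball as the set $E$ and a fixed $r$ with $\overline{\mathbb{B}(1,1)}\subset\mathbb{D}_r$) shows that ${\rm area}(\Omega)$ is bounded; the area theorem for univalent maps of the exterior disk then forces the omitted compact set $\overline\Omega$ to have diameter and logarithmic capacity comparable to $1$, so that $\Omega\subseteq\mathbb{B}(1,C_1)$ for some $C_1=C_1(M,m,\epsilon_0)$. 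For the inner bound, every point of $\partial\mathbb{B}(1,1)$ lies at distance exactly $1$ from the fixed point $1$, so the second (distortion) estimate of Theorem \ref{TD2} applied to $\Phi$ gives $|\Phi(z)-1|\ge c_1$ for every $z\in\partial\mathbb{B}(1,1)$, with $c_1=c_1(M,m,\epsilon_0)>0$; since $1\in\Omega$ this yields $\mathbb{B}(1,c_1)\subseteq\Omega$. Thus $\mathbb{B}(1,c_1)\subseteq\Omega\subseteq\mathbb{B}(1,C_1)$ with $g(1)=1$, and the Koebe distortion theorem gives $|g'|\asymp1$ and $|(g^{\comp-1})'|\asymp1$ with constants depending only on $M,m,\epsilon_0$ on the regions that matter for the estimates.

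It then remains to apply Theorem \ref{TD2} to $\Phi$ with that fixed $r$ (the M\"obius normalization affects the constants $C,C'$ only by fixed factors): for any measurable $E\subseteq\mathbb{B}(1,1)$ one gets ${\rm area}(\Phi(E))\le C(\log(1+1/{\rm area}(E)))^{-mc}$, and for $z_1,z_2\in\mathbb{B}(1,1)$ one gets $|\Phi(z_1)-\Phi(z_2)|\ge\frac1{C'}e^{-\frac m{c'}\log^2(2+1/|z_1-z_2|)}$. Since $\varphi=g^{\comp-1}\comp\Phi$ on $\mathbb{B}(1,1)$, the distortion bounds of the previous paragraph give ${\rm area}(\varphi(E))=\int_{\Phi(E)}|(g^{\comp-1})'|^2\,dA\asymp{\rm area}(\Phi(E))$ and $|\varphi(z_1)-\varphi(z_2)|\asymp|\Phi(z_1)-\Phi(z_2)|$, which yields the two claimed inequalities after absorbing the bounded factors into the constants.

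I expect the main obstacle to be the geometric control of $\Omega=\Phi(\mathbb{B}(1,1))$ in the second paragraph, where the conformality of $\Phi$ outside $\overline{\mathbb{B}(1,1)}$ must be played off against the a priori bounds of Theorem \ref{TD2} for $\Phi$ itself. A further technical point is to make the distortion bounds for $g$ and $g^{\comp-1}$ uniform up to $\partial\mathbb{B}(1,1)$ and $\partial\Omega$: because $\Phi$ is conformal on the outer side of $\partial\mathbb{B}(1,1)$, the boundary curve $\partial\Omega$ inherits controlled geometry from the coefficient-bounded exterior univalent map, and near the boundary one estimates $g$ through this exterior map rather than through the interior David part, where the difficulty is concentrated.
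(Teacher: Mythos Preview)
Your overall architecture---extend $\mu_\varphi$ to $\hat{\mathbb C}$, solve globally to get $\Phi$, and compare $\varphi$ with $\Phi$ through a conformal map---is exactly the paper's, and your control of the inner and outer radii of $\Omega=\Phi(\mathbb B(1,1))$ via Theorem~\ref{TD2} is sound. The gap is precisely the point you flag yourself at the end: the bounds $|g'|\asymp 1$ and $|(g^{-1})'|\asymp 1$ do \emph{not} follow from $\mathbb B(1,c_1)\subseteq\Omega\subseteq\mathbb B(1,C_1)$ and $g(1)=1$. Koebe distortion on $g:\mathbb B(1,1)\to\Omega$ only controls $|g'|$ on compact subsets of $\mathbb B(1,1)$; near $\partial\mathbb B(1,1)$ one has $|g'(z)|\asymp d(g(z),\partial\Omega)/d(z,\partial\mathbb B(1,1))$, and this ratio can degenerate in either direction unless $\partial\Omega$ is known to be, say, a quasicircle with controlled constant. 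Your proposed rescue---``estimate $g$ through the exterior map''---does not work as stated: $\Phi|_{\hat{\mathbb C}\setminus\overline{\mathbb B(1,1)}}$ is a normalized univalent map of the exterior disk, and the area theorem bounds its coefficient sequence in $\ell^2$, but this says nothing about the regularity of $\partial\Omega$ (such maps can have boundaries of positive area, cusps, spirals, etc.). There is no mechanism in your setup that forces $\partial\Omega$ to be a uniform quasicircle, because $\Phi$ is genuinely David (not quasiconformal) on the inner side of $\partial\mathbb B(1,1)$.

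The paper avoids this obstacle by extending $\mu_\varphi$ not by zero but by \emph{Schwarz reflection} in $\partial\mathbb B(1,1)$ (out to some radius $1/\epsilon$), and simultaneously extending $\varphi$ itself by reflection to a map $\tilde\varphi$ on $\mathbb B(1,1/\epsilon)$. Then $\mu_{\tilde\varphi}=\mu_\Phi$ on all of $\mathbb B(1,1/\epsilon)$, so the comparison map $\tilde\varphi\circ\Phi^{-1}$ is conformal on the \emph{strictly larger} domain $\Phi(\mathbb B(1,1/\epsilon))$, which contains $\overline\Omega$ together with an annular collar whose modulus equals that of the reflected image annulus and is arranged to exceed $1$. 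Now Koebe distortion applies on the compact set $\overline\Omega$ inside this larger domain and gives $|(\tilde\varphi\circ\Phi^{-1})'|\asymp 1$ on $\Omega$ with universal constants (the normalization $1\mapsto 1$, $0\mapsto$ a point at distance $1$ from $1$ pins down the scale). This reflection trick is the missing idea in your proposal; once you have it, the rest of your argument goes through.
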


\begin{proof}
We fix an $0<\epsilon<1$ so that the conformal modulus of $\varphi(\mathbb{B}(1,1)\setminus\overline{\mathbb{B}(1,\epsilon)})$
is greater than $1$.
We define
$$\tilde{\mu}_{\varphi}(z)=\left\{\begin{matrix}\mu_{\varphi}(z)&,&z\in\mathbb{B}(1,1)\\
0&,&z\in\partial\mathbb{B}(1,1)\\(\frac{z-1}{\overline{z-1}})^2\overline{\mu(\frac{1}{\overline{z-1}}+1)}&,
&z\in\mathbb{B}(1,\frac{1}{\epsilon})\setminus\overline{\mathbb{B}(1,1)}\\
0&,&z\in\mathbb{C}\setminus\mathbb{B}(1,\frac{1}{\epsilon})\end{matrix}\right.$$
Then by Theorem \ref{TD1},
there exists an orientation-preserving homeomorphism $\Phi:\mathbb{C}\to\mathbb{C}$ fixing $0, 1$ in $W^{1,1}_{loc}(\mathbb{C})$
which satisfies $\overline{\partial}\Phi=\tilde{\mu}_{\varphi}\cdot\partial\Phi$ almost everywhere.
Moreover, $\varphi\comp\Phi^{-1}$ is a conformal map from $\Phi(\mathbb{B}(1,1))$ to $\mathbb{B}(1,1)$ and hence
$\varphi\comp\Phi^{-1}$ can be extended to a homeomorphism from $\overline{\Phi(\mathbb{B}(1,1))}$ to $\overline{\mathbb{B}(1,1)}$.
Since $\varphi(z)=(\varphi\comp\Phi^{-1})\comp\Phi(z)$, $z\in\mathbb{B}(1,1)$,
$\varphi$ can be extended to a homeomorphism $\hat{\varphi}$ from $\overline{\mathbb{B}(1,1)}$ to $\overline{\mathbb{B}(1,1)}$.
We define
$$\tilde{\varphi}(z)=\left\{\begin{matrix}\hat{\varphi}(z)&,&z\in\overline{\mathbb{B}(1,1)}\\
\frac{1}{\overline{\varphi(\frac{1}{\overline{z-1}}+1)-1}}+1&,&z\in\mathbb{C}\setminus\overline{\mathbb{B}(1,1)}\end{matrix}\right.$$
It is easy to see that $\mu_{\tilde{\varphi}}(z)=\tilde{\mu}_{\phi}(z)\ a.e.$ in $\mathbb{B}(1,\frac{1}{\epsilon})$.
Then by Theorem \ref{TD1}, $\tilde{\varphi}\comp\Phi^{-1}$ is a conformal map on $\Phi(\mathbb{B}(1,\frac{1}{\epsilon}))$.
Observe that
$$\tilde{\varphi}\comp\Phi^{-1}(1)=1,\ |\tilde{\varphi}\comp\Phi^{-1}(0)-1|=1$$
and
$$\tilde{\varphi}\comp\Phi^{-1}(\Phi(\mathbb{B}(1,\frac{1}{\epsilon}))\setminus\overline{\Phi(\mathbb{B}(1,1))})=
\tilde{\varphi}(\mathbb{B}(1,\frac{1}{\epsilon})\setminus\overline{\mathbb{B}(1,1)})=
\frac{1}{\overline{\varphi(\mathbb{B}(1,1)\setminus\overline{\mathbb{B}(1,\epsilon)})-1}}+1$$
has a conformal modulus greater than $1$.
Then by Koebe distortion theorem, $$\sup_{z\in\Phi(\mathbb{B}(1,1))}|(\tilde{\varphi}\comp\Phi^{-1})'(z)|\asymp\inf_{z\in\Phi(\mathbb{B}(1,1))}|(\tilde{\varphi}\comp\Phi^{-1})'(z)|\asymp1.$$
Then any two points $z_1,z_2\in\Phi(\mathbb{B}(1,1))$ and any measurable set $E\subset\Phi(\mathbb{B}(1,1))$, we have
$$|\tilde{\varphi}\comp\Phi^{-1}(z_1)-\tilde{\varphi}\comp\Phi^{-1}(z_2)|\asymp|z_1-z_2|\ {\rm and}\
{\rm area}(E)\asymp{\rm area}(\tilde{\varphi}\comp\Phi^{-1}(E)).$$
Thus by Theorem \ref{TD2}, the restriction of
$\tilde{\varphi}(z)=(\tilde{\varphi}\comp\Phi^{-1})\comp\Phi(z)$, $z\in\mathbb{B}(1,\frac{1}{\epsilon})$,
to $\mathbb{B}(1,1)$, that is $\varphi$, has the conclusion in Corollary \ref{c4.1}.
\end{proof}

For any David Beltrami differential $\mu$ with the form (\ref{e3.1a}),
if we set $K:=\frac{2-\epsilon}{\epsilon}$, then
(\ref{e3.1a}) can be equivalently written as
$${\rm area}\{z\in\Omega:\frac{1+|\mu|}{1-|\mu|}>K\}\leq K_1e^{-\frac{K}{K_2}}\ {\rm for\ all}\ K>K_0$$
for $K_2=\frac{2}{m}, K_1=Me^{-\frac{m}{2}}$ and $K_0=\frac{2-\epsilon_0}{\epsilon_0}$.
The set of all such David-Beltrami differentials is denoted by
$\mathcal{F}(\Omega)(K_0,K_1,K_2)$. In particular,
if $\Omega=\{z\in\mathbb{C}:|z-1|<1\}$, then $\mathcal{F}(\Omega)(K_0,K_1,K_2)$ will be simply written
as $\mathcal{F}(K_0,K_1,K_2)$.

\section{The canonical $F_{\alpha}$-invariant David-Beltrami differential}
This section is devoted to recalling the canonical $F_{\alpha}$-invariant David-Beltrami differential constructed by Petersen and Zakeri for
$\alpha=[a_1,a_2,\cdots]$ under Petersen and Zakeri's condition. More details see Petersen and Zakeri's paper \cite{PZ04}.
\subsection{Two imbedded graphs in the upper half-plane}
Given any irrational number $\alpha=[a_1,a_2,\cdots]$ and
the corresponding rational approximation $\{\frac{p_n}{q_n}\}_{n\geq0}$.
For any two positive integers $j\not=k$, we denote by the interval $]x_j,x_k[_{\mathbb{S}^1}^*$
the component of $\mathbb{S}^1\setminus\{x_j,x_k\}$ not containing $1$.
Given $n\geq0$, we consider the set
$Q_n:=\{x_j:0\leq j<q_n\}$ on $\mathbb{S}^1$.
One can see that
the complement of $Q_n$ in $\mathbb{S}^1$ is the union of these disjoint pairwise intervals:
$$\mathbb{S}^1\setminus Q_n=\left(\cup_{0\leq j<q_n-q_{n-1}}]x_{j+q_{n-1}},x_j[_{\mathbb{S}^1}^*\right)\cup
\left(\cup_{0\leq j<q_{n-1}}]x_j,x_{j+q_n-q_{n-1}}[_{\mathbb{S}^1}^*\right).$$
Moreover, $x_j$ and $x_k$, with $j<k$, are adjacent in both $Q_n$ and $Q_{n+1}$
if and only if $a_{n+1}=1,k=j+q_{n-1},$ and $0\leq j<q_n-q_{n-1}$.
We denote by $\tilde{Q}_n$ the lift of $Q_n$ to the real line $\mathbb{R}$ under the map $\exp(\cdot):=e^{2\pi i(\cdot)}:\mathbb{R}\to\mathbb{S}^1$.
Then $\tilde{Q}_n$ is a translation-invariant set $\{z\in\mathbb{R}:e^{2\pi iz}\in Q_n\}$.
Moreover, if $n\geq1$, then
$\tilde{x},\tilde{y}\in\tilde{Q}_n$ are adjacent if and only if $|\tilde{x}-\tilde{y}|<1$ and the images
$e^{2\pi i\tilde{x}}, e^{2\pi i\tilde{y}}$ are adjacent in $Q_n$;
if $n=0$, then $\tilde{x},\tilde{y}\in\tilde{Q}_0$ are adjacent if and only if $|\tilde{x}-\tilde{y}|=1$.
For any $\tilde{x}\in\tilde{Q}_n$, define
$$M_n(\tilde{x}):=\frac{x_r-x_l}{2},$$
where $x_r$ and $x_l$ are the points in $\tilde{Q}_n$
immediately adjacent to the right and left of $\tilde{x}$.
Evidently, $M_n(\tilde{x})>M_{n+1}(\tilde{x})$ unless $x_r$ and $x_l$ are adjacent to $\tilde{x}$ in $\tilde{Q}_{n+1}$ also,
in which case $M_n(\tilde{x})=M_{n+1}(\tilde{x})$.
Now \emph{an imbedded graph $\Gamma$} induced by $\{\tilde{Q}_n\}_{n\geq0}$ in the upper half-plane $\mathbb{H}$ is defined as follows:
\begin{itemize}
\item The vertices of $\Gamma$ are the points $\{z_n(\tilde{x})=\tilde{x}+i M_n(\tilde{x}):n\geq0\ {\rm and}\ \tilde{x}\in\tilde{Q}_n\}$
\item The edges of $\Gamma$ are the vertical segments
$$\{[z_n(\tilde{x}),z_{n+1}(\tilde{x})]:n\geq0\ {\rm and}\ \tilde{x}\in\tilde{Q}_n\ {\rm with}\ M_n(\tilde{x})\not=M_{n+1}(\tilde{x})\}$$
and the non-vertical segments
$$\{[z_n(\tilde{x}),z_n(\tilde{y})]:n\geq0\ {\rm and}\ \tilde{x},\tilde{y}\ {\rm are\ adjacent\ in}\ \tilde{Q}_n\}.$$
\end{itemize}
Note that $z_n(x)=z_{n+1}(x)$ if and only if $M_n(x)=M_{n+1}(x)$, in which case the corresponding vertex of $\Gamma$ is
doubly labelled. Moreover, $z_0(\tilde{x})=\tilde{x}+i$ for all $\tilde{x}\in\tilde{Q}_0=\mathbb{Z}$.

By a cell of $\Gamma$ one means the closure of any bounded connected component of $\mathbb{H}\setminus\Gamma$.
Any cell $\gamma$ of $\Gamma$ is uniquely determined by a pair of adjacent points $x<y$
in $\tilde{Q}_n$ with the property that either $M_n(x)\not=M_{n+1}(x)$ or $M_n(y)\not=M_{n+1}(y)$.
In this case, we say that $\gamma$ is an $n$-cell.
The top of the $n$-cell $\gamma$ is formed by the non-vertical edge $[z_n(x), z_n(y)]$ while
its bottom is formed by the union of non-vertical edges.
$$[z_{n+1}(t_0),z_{n+1}(t_1)]\cup[z_{n+1}(t_1),z_{n+1}(t_2)]\cup\cdots\cup[z_{n+1}(t_{k-1}),z_{n+1}(t_k)],$$
where the points $x=t_0<t_1<\cdots<t_k=y$ form the intersection $[x,y]\cap\tilde{Q}_{n+1}$.
If $k=1$ so that $x, y$ are also adjacent in $\tilde{Q}_{n+1}$,
then $\gamma$ is either a triangle or a trapezoid.
Otherwise $k\geq2$, $\gamma$ is a ($k+3$)-gon, where $k$ is either
$a_{n+1}$ or $a_{n+1}+1$. About these cells, the \'Swiatek-Herman real a priori bounds imply the following:
\begin{itemize}
\item[(\#1)] Fixing any integer $n\geq0$, the union of all the $m$-cells of $\Gamma$
for all $m\geq n$ is contained in a horizontal strip $\{z\in\mathbb{H}: 0\leq{\rm Im}(z)\leq l\}$
whose height satisfies an asymptotically universal bound $l\preccurlyeq\sigma^n$,
where $0<\sigma<1$ is a universal constant;
\item[(\#2)] The cells of $\Gamma$ have ``bounded geometry'' in the following
sense: There is a constant $c>1$ such that the top, bottom, and sides of
any $n$-cell $\gamma$ of $\Gamma$ have lengths comparable up to $c$. Moreover, the slopes of
non-vertical edges of $\gamma$ are bounded by an asymptotically
universal bound.
\end{itemize}

In a similar fashion, one can construct the above objects for the rigid
rotation $R_{\alpha}$, for which these similar but ``primed'' notations are chosen. Thus, the corresponding notations
$x_j'$, $Q_n'$, $\tilde{Q}'_n$, $M_n'(\cdot)$, $z_n'(\cdot)$, $\Gamma'$ and $\gamma'$ are well defined.
About these cells $\gamma'$, the following holds:
\begin{itemize}
\item The cells of $\Gamma'$ have ``bounded geometry'' in the following
sense: There is a universal constant $c>1$ such that the top, bottom, and sides
of any cell $\gamma'$ of $\Gamma'$ have lengths comparable up to $c$. Moreover, the slopes of
non-vertical edges of $\gamma$ are bounded by $\frac{1}{2}$.
\end{itemize}

Let $\tilde{x}_{-1}$ be a preimage of $x_{-1}$ under $\exp$.
For any $a,b\in\mathbb{C}$, by $[a,b]$ we denote the segment with ends $a,b$ in $\mathbb{C}$.
Set
$$E_n^+:=[x_{q_{n-1}-1},z_n(x_{q_{n-1}-1})]\cup[x_{q_n-1},z_n(x_{q_n-1})]\cup[z_n(x_{q_{n-1}-1}),z_n(x_{q_n-1})]$$
and
$$E_n^-:=[x_{q_{n-1}},z_n(x_{q_{n-1}})]\cup[x_{q_n-q_{n-1}},z_n(x_{q_n-q_{n-1}})]
\cup[z_n(x_{q_{n-1}}),z_n(x_0)]\cup[z_n(x_{q_n-q_{n-1}}),z_n(x_0)].$$
Define
$$r_n^+:=\sup\{r:\mathbb{B}(\tilde{x}_{-1},r)\cap E_n^+=\emptyset\}$$
and
$$\tilde{r}_n^+:=\frac{r_n^++r_{n+1}^+}{2};$$
define
$$r_n^-:=\sup\{r:\mathbb{B}(0,r)\cap E_n^-=\emptyset\}$$
and
$$\tilde{r}_n^-:=\frac{r_n^-+r_{n+1}^-}{2}.$$
We denote by $g_0$ the branch of $F_{\alpha}^{\comp-1}$ from $\overline{\mathbb{D}}$ to $\overline{U_0}$.
For every $n\geq1$,
we define
$$l_n^+:=\inf\left\{|g_0\comp\exp(\tilde{z})-1|:\tilde{z}=\tilde{r}_n^+\cdot e^{2\pi i\theta}+\tilde{x}_{-1},\ 0\leq\theta\leq\pi\right\}$$
and
$$l_n^-:=\inf\left\{|\exp(\tilde{z})-1|:\tilde{z}=\tilde{r}_n^-\cdot e^{2\pi i\theta},\ 0\leq\theta\leq\pi\right\}.$$
By the \'Swiatek-Herman real a priori bounds and above definitions, we have the following properties:
\begin{itemize}
\item Both $\{l_n^+\}$ and $\{l_n^-\}$ are quasi-log-arithmetic sequences;
\item Images of all $m$-cells ($m<n$) of $\Gamma$ under $\exp$ don't intersect with the ball $\mathbb{B}(1,l_n^-)$;
\item Images of all $m$-cells ($m<n$) of $\Gamma$ under $g_0\comp\exp$ don't intersect with the ball $\mathbb{B}(1,l_n^+)$.
\end{itemize}

\subsection{The construction of the canonical David-Beltrami differential\label{ss5.2}}
Recall that by Yoccoz's theorem,
there exists a unique homeomorphism $h:\mathbb{S}^1\to\mathbb{S}^1$ with $h(1)=1$ such that
$h\comp f_{\alpha}|_{\mathbb{S}^1}=R_{\alpha}\comp h$.
Let $\tilde{h}:\mathbb{R}\to\mathbb{R}$ be its lift with
$\tilde{h}(0)=0$. The lift $\tilde{h}$ fixes all integer points and $\tilde{h}(\tilde{Q}_n)=\tilde{Q}_n'$ for all $n\geq0$.
Then one can extend $\tilde{h}$ to a homeomorphism $\tilde{H}$ between the imbedded graphs $\Gamma$
and $\Gamma'$ by mapping each vertex of $\Gamma$ to the corresponding vertex of $\Gamma'$ and each
edge of $\Gamma$ affinely to the corresponding edge of $\Gamma'$.
Note that for each $n$-cell $\gamma$ of $\Gamma$, the boundary $\partial\gamma$ is mapped by $\tilde{H}$
homeomorphically and edgewise affinely onto the boundary $\partial\gamma'$ of a unique
$n$-cell $\gamma'$ of $\Gamma'$. Moreover,
$\tilde{H}$ defined in this way is the identity on the horizontal
line $\mathbb{R}+i$ so that we can define $\tilde{H}(z)=z$ for all $z\in\mathbb{H}$ with ${\rm Im}(z)\geq1$.
Following [Appendix, \cite{PZ04}],
there exists a homeomorphism extension $\hat{\mathbb{H}}:\mathbb{H}\to\mathbb{H}$ such that
the restriction $\hat{H}|_{\gamma}:\gamma\to\gamma'$ is a quasiconformal homeomorphism whose
dilatation is $\mathcal{O}(1+(\log a_{n+1})^2)$. Further,
let $H:\overline{\mathbb{D}}\to\overline{\mathbb{D}}$ be the induced homeomorphism under $z\mapsto e^{2\pi iz}$ and
if the rotation number $\alpha$ satisfies Petersen and Zakeri's condition,
then the extension $H:\overline{\mathbb{D}}\to\overline{\mathbb{D}}$ satisfies
\begin{equation}
\label{e3.1}{\rm area}\left\{z\in\mathbb{D}:\left|\frac{\overline{\partial}H(z)}{\partial H(z)}\right|>1-\epsilon\right\}
\leq K_1\cdot e^{-\frac{c}{\epsilon}}\ {\rm for}\ {\rm all}\ 0<\epsilon<\epsilon_0.
\end{equation}
Here $K_1>0$ is a universal constant, while in general the constant $c>0$
depends on
$\limsup_{n\to\infty}\frac{\log a_n}{\sqrt{n}}$
and the constant $0<\epsilon_0<1$ depends on $\alpha$.

Now we define a Beltrami differential $\mu_0$ as
$$\mu_0(z)=\left\{\begin{matrix}\frac{\overline{\partial}H(z)}{\partial H(z)}\frac{d\overline{z}}{dz},&z\in\mathbb{D}\\
(F_{\alpha}^{\comp n})^*(\frac{\overline{\partial}H}{\partial H}\frac{d\overline{z}}{dz})(z),&z\in F_{\alpha}^{\comp -n}(\mathbb{D}),n\geq1\\
0,&{\rm else}\end{matrix}\right..$$
In \cite{PZ04} Petersen and Zakeri proved the following theorem:
\begin{theorem}[Petersen and Zakeri]
\label{T3.1}There exist a universal constant $0<\beta<1$ and a constant $C_1>0$ {\rm(}depending on $\alpha${\rm)} such that
for every measurable set $E\subseteq\mathbb{D}$
$$\nu(E)\leq C_1({\rm area}(E))^{\beta},$$
here $\nu$ is a measure supported on $\mathbb{D}$ and satisfies that
for any measurable set $E\subseteq\mathbb{D}$,
$$\nu(E)={\rm area}(E)+\sum_g{\rm area}(g(E)),$$
where the summation is over all the univalent branches $g=F_{\alpha}^{\comp -k}$, $k\geq1$ on $\mathbb{D}$.
\end{theorem}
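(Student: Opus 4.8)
The plan is to recast the sum over branches as a sum over the \emph{decorations} of $\mathbb{D}$. Recall that the critical values of $F_{\alpha}$ are $f_{\alpha}(1)\in\mathbb{S}^1$ and $\infty$, the latter in the basin of $\infty$ and hence never in a preimage of $\mathbb{D}$, while the forward orbit of the critical point $1$ stays on $\mathbb{S}^1$; therefore $\mathbb{D}$ contains no point of the forward orbit of the critical set, so for every $k\geq1$ the map $F_{\alpha}^{\comp k}:F_{\alpha}^{\comp-k}(\mathbb{D})\to\mathbb{D}$ is a covering and each of its components is a Jordan domain mapped homeomorphically onto $\mathbb{D}$. Since on the exterior $F_{\alpha}=f_{\alpha}$ is holomorphic while on $\mathbb{D}$ it is only a homeomorphism, the components $V$ \emph{different from} $\mathbb{D}$ are exactly the images of the univalent (i.e.\ injective holomorphic) branches $g_{V}=F_{\alpha}^{\comp-k}|_{\mathbb{D}}$, each such $V$ arising from a unique branch. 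These $V$ are pairwise disjoint and contained in the non-escaping set of $F_{\alpha}$, which is bounded uniformly in $\alpha$; hence $\sum_{V}{\rm area}(V)$ is bounded by a universal constant, and $\nu(E)={\rm area}(E)+\sum_{V}{\rm area}(g_{V}(E))$. It remains to estimate this sum.

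The geometric input is the \'Swiatek--Herman real a priori bounds, in the form already recorded for the critical puzzle pieces (properties (a)--(e), especially (d)): every decoration $V$ satisfies ${\rm area}(V)\asymp({\rm diam}\,V)^{2}\asymp|g_{V}'(0)|^{2}$, its boundary is rectifiable with Euclidean length $\asymp{\rm diam}(V)$, and the Riemann map $g_{V}:\mathbb{D}\to V$ is H\"older, $d(g_{V}(z),\partial V)\preccurlyeq{\rm diam}(V)\,(1-|z|)^{\gamma}$, all with \emph{universal} constants and exponent $\gamma\in(0,1]$ --- the cube-root singularity of $g_{0}$ over $f_{\alpha}(1)$ only produces a corner of the universal angle $\pi/3$, and the real a priori bounds control the shape of every decoration. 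For $E\subseteq\mathbb{D}_{1/2}$ one is then done at once: the Koebe distortion theorem gives ${\rm area}(g_{V}(E))\preccurlyeq({\rm diam}\,V)^{2}\,{\rm area}(E)$, so summing over $V$ yields $\sum_{V}{\rm area}(g_{V}(E))\preccurlyeq{\rm area}(E)$, i.e.\ the desired bound with exponent $1$.

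It remains to treat $E$ inside the collar $\{1/2<|z|<1\}$. Decompose $E=\bigsqcup_{j\geq1}E_{j}$ with $E_{j}=E\cap\{2^{-j-1}<1-|z|\leq2^{-j}\}$. For each $V$ and $j$ there are two estimates for ${\rm area}(g_{V}(E_{j}))$: by the H\"older bound, $g_{V}(E_{j})$ lies in a $({\rm diam}(V)\,2^{-\gamma j})$-neighborhood of $\partial V$, so with the length bound ${\rm area}(g_{V}(E_{j}))\preccurlyeq({\rm diam}\,V)^{2}\,2^{-\gamma j}$; by Koebe distortion, $|g_{V}'|\preccurlyeq{\rm diam}(V)\,2^{3j}$ on $E_{j}$, so ${\rm area}(g_{V}(E_{j}))\preccurlyeq({\rm diam}\,V)^{2}\,2^{6j}\,{\rm area}(E_{j})$. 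Taking the geometric mean of the two with a fixed small weight $\theta\in(0,1)$ for which $b:=\gamma(1-\theta)-6\theta>0$ gives ${\rm area}(g_{V}(E_{j}))\preccurlyeq({\rm diam}\,V)^{2}\,2^{-bj}\,({\rm area}\,E_{j})^{\theta}$; summing first over $V$ (using $\sum_{V}({\rm diam}\,V)^{2}\preccurlyeq1$) and then over $j$ (using H\"older's inequality, $\sum_{j}2^{-bj}({\rm area}\,E_{j})^{\theta}\preccurlyeq(\sum_{j}{\rm area}\,E_{j})^{\theta}=({\rm area}\,E)^{\theta}$) yields $\sum_{V}{\rm area}(g_{V}(E))\preccurlyeq({\rm area}\,E)^{\theta}$. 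Combining the two ranges of $E$ we obtain $\nu(E)\preccurlyeq{\rm area}(E)+({\rm area}\,E)^{\theta}\preccurlyeq({\rm area}\,E)^{\theta}$ for every measurable $E\subseteq\mathbb{D}$, which is the assertion with $\beta:=\theta$, a universal constant.

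The crux is the geometric input of the second paragraph: one needs that \emph{every} decoration --- not merely all but finitely many --- is ``round'', with \emph{universal} shape constants throughout. This is precisely what the real a priori bounds together with Petersen and Zakeri's analysis of the puzzle provide, and it is what makes $\beta$ universal while a single univalent branch admits no such H\"older bound. The dependence of $C_{1}$ on $\alpha$ enters only through the finitely many low-index scales at which the a priori bounds are not yet uniform --- equivalently, through the $\alpha$-dependent comparability constant of the quasi-log-arithmetic sequences for small $n$ --- which affect only a bounded number of terms in the sums above, hence the multiplicative constant but not the exponent.
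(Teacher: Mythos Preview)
The paper does not prove this statement; it is quoted from \cite{PZ04} and used as a black box. So there is no in-paper argument to compare against.

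Your architecture is sound: once one knows that every drop $V$ is uniformly ``round'' (${\rm area}(V)\asymp({\rm diam}\,V)^2$), has rectifiable boundary of length $\asymp{\rm diam}(V)$, and that the Riemann map satisfies $|g_V'(z)|\preccurlyeq{\rm diam}(V)\,(1-|z|)^{\gamma-1}$ with a \emph{universal} exponent $\gamma\in(0,1]$ and implied constant, the dyadic-collar interpolation and H\"older summation go through exactly as you wrote and yield a universal $\beta=\theta\in(0,\gamma/(\gamma+6))$. The bookkeeping with the drops (pairwise disjointness, $\sum_V{\rm area}(V)<\infty$, each univalent branch corresponding to a unique $V$) is also correct.

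The gap is that the geometric package itself is the heart of the matter and you do not prove it. None of the three properties holds for a general univalent image of $\mathbb{D}$ --- the Koebe domain already fails all three --- so they genuinely require specific control on the family $\{V\}$. Your citation of properties (a)--(e) is not adequate: those are statements about the \emph{critical puzzle pieces} $P^n$ and $\varphi_S(P^n)$, not about the drops, and the passage from puzzle bounds to a uniform John constant for every drop (including the infinitely many drops-on-drops, attached not to $\mathbb{S}^1$ but to boundaries of parent drops) is a substantial part of \cite{PZ04}. Your final paragraph concedes that ``Petersen and Zakeri's analysis of the puzzle'' is what supplies this, but that analysis is precisely the machinery of \cite{PZ04} whose output you are being asked to establish; invoking it wholesale is not a proof. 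Even the innocuous-looking step $\sum_V({\rm diam}\,V)^2\preccurlyeq 1$ already uses roundness: without it one only has $\sum_V{\rm area}(V)<\infty$, which does not control $\sum_V({\rm diam}\,V)^2$. If you want a self-contained argument, the uniform John/H\"older property of the drops must be proved first.
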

\noindent It follows from (\ref{e3.1}) and Theorem \ref{T3.1} that
if the rotation number $\alpha$ satisfies Petersen and Zakeri's condition, then
$\mu_0$ is a David-Beltrami differential. In this case,
there exists a David homeomorphism $\phi_{\alpha}$ such that
$$\mu_{\phi_{\alpha}}=\mu_0\ {\rm and}\ \phi_{\alpha}\comp F_{\alpha}\comp\phi_{\alpha}^{\comp -1}(z)=P_{\alpha_{\alpha}}(z).$$
We call such $\mu_0$ \emph{a canonical $F_{\alpha_{\alpha}}$-invariant David-Beltrami differential} and
$\phi_{\alpha}$ is the corresponding coordinate map.

\section{The canonical David-Beltrami differential at a small scale\label{s5}}
This section is devoted to exploring the canonical David-Beltrami differential at a small scale near the critical point.
We assume $\alpha\in\mathcal{E}_0$
and let $\mu_0$ be a canonical $F_{\alpha}$-invariant David-Beltrami differential.
For all $r>0$, we let $\mu_r:=\mu_r(z)\frac{d\overline{z}}{dz}$ be the restriction of $\mu_0$ to $\mathbb{B}(1,r)$.
We define
$$\hat{\mu}_r(z):=\mu_r(r(z-1)+1)\ {\rm and}\ \hat{\mu}_r:=\hat{\mu}_r(z)\frac{d\overline{z}}{dz}$$
on $\mathbb{B}(1,1)$.
The main result of this section is the following proposition:
\begin{proposition}
\label{p1}There exists a sequence $\{r_n\}_{n=1}^{\infty}$ of positive real numbers with $\lim_{n\to\infty}r_n=0$
and three positive real numbers $K_0,K_1,K_2$ such that for all large enough $n$, $\hat{\mu}_{r_n}\in\mathcal{F}(K_0,K_1,K_2)$.
\end{proposition}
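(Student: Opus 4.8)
The plan is to turn Proposition~\ref{p1} into a rescaled area estimate for the bad set of $\mu_0$ near the critical point, and then to play the combinatorics of $\mathcal{E}_0$ off against the quasi-log-arithmetic scales $l_n^{\pm}$ and Petersen-Zakeri's area estimate (Theorem~\ref{T3.1}). By the reformulation of the David condition in terms of $K=\frac{2-\epsilon}{\epsilon}$, membership $\hat{\mu}_{r_n}\in\mathcal{F}(K_0,K_1,K_2)$ is equivalent to
\[
{\rm area}\Big\{z\in\mathbb{B}(1,r_n):\frac{1+|\mu_0(z)|}{1-|\mu_0(z)|}>K\Big\}\le K_1\,e^{-K/K_2}\,r_n^2\qquad\text{for all }K>K_0,
\]
since rescaling $\mathbb{B}(1,r_n)$ to $\mathbb{B}(1,1)$ multiplies area by $r_n^{-2}$ and preserves the pointwise dilatation. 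Denote by $B_K^{(n)}$ the set on the left.

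I would then decompose $\mu_0$ by \emph{generation}. Since $\mu_0$ is $F_{\alpha}$-invariant, equals $\overline{\partial}H/\partial H$ on $\mathbb{D}$, equals $(F_{\alpha}^{\circ k})^{*}\big(\overline{\partial}H/\partial H\big)$ on every preimage component of $\mathbb{D}$, and since each such component is carried onto $\mathbb{D}$ by a \emph{conformal} iterate of $F_{\alpha}$ (the only critical point $1$ lies on $\mathbb{S}^1\cap\partial U_0$, never in the interior of a preimage component), the dilatation $\frac{1+|\mu_0(z)|}{1-|\mu_0(z)|}$ equals the dilatation of $\overline{\partial}H/\partial H$ at the image of $z$ inside $\mathbb{D}$. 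By the construction of $H$ in Subsection~\ref{ss5.2}, the latter is $\le C_0\big(1+(\log c_{m+1})^{2}\big)$ for a universal $C_0>0$, where $m=m(z)$ is the generation of the cell of the graph $\Gamma$ (for $\alpha=[c_1,c_2,\dots]$) through which that point of $\mathbb{D}$ is labelled. Setting $V_m:=\{z:m(z)=m\}$ and $\mathcal{B}_K:=\{m\ge0:\,C_0(1+(\log c_{m+1})^{2})>K\}$, we obtain $B_K^{(n)}\subseteq\bigcup_{m\in\mathcal{B}_K}\big(V_m\cap\mathbb{B}(1,r_n)\big)$, so it is enough to estimate each $V_m\cap\mathbb{B}(1,r_n)$ and sum.

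For the scales, let $M$, $\theta=[a_1,a_2,\dots]\in\mathcal{E}$ and $\{s_j\},\{t_j\}$ witness $\alpha\in\mathcal{E}_0$, and recall that $\{l_n^{-}\},\{l_n^{+}\}$ are quasi-log-arithmetic and that the $\exp$-images (resp.\ the $(g_0\circ\exp)$-images) of all $m$-cells with $m<n$ avoid $\mathbb{B}(1,l_n^{-})$ (resp.\ $\mathbb{B}(1,l_n^{+})$). Put $r_n:=\min\{l_{s_n}^{-},l_{s_n}^{+}\}$; this is again quasi-log-arithmetic, so $r_n\to0$. With this choice, inside $\mathbb{B}(1,r_n)$ no $V_m$ with $m<s_n$ meets $\mathbb{D}\cup U_0$, so such $V_m$ can occur only inside \emph{drops} (preimage components of $\mathbb{D}$) entirely contained in $\mathbb{B}(1,r_n)$. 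I would then prove two area bounds. First, for $m\ge s_n$: the part of $V_m\cap\mathbb{B}(1,r_n)$ in $\mathbb{D}\cup U_0$ lies in $\mathbb{B}(1,l_m^{-})\cup\mathbb{B}(1,l_m^{+})$ and has area $\preccurlyeq (l_m^{-})^{2}+(l_m^{+})^{2}$, while the part in drops is controlled comparably using the critical puzzle pieces $P^{m}$ (with $r_{out}(P^m)\asymp l(I_m)$) and the \'Swiatek-Herman real a priori bounds; as $\{l_m^{\pm}\}$ is quasi-log-arithmetic this gives ${\rm area}(V_m\cap\mathbb{B}(1,r_n))\preccurlyeq e^{-c(m-s_n)}r_n^2$. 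Second, for $m<s_n$: $V_m\cap\mathbb{B}(1,r_n)$ is a disjoint union, over drops $W\subseteq\mathbb{B}(1,r_n)$, of images under conformal branches $g_W$ of $F_{\alpha}^{\circ -k}$ of the generation-$m$ cell region of $\mathbb{D}$, which has area $\preccurlyeq\sigma^{m}$; summing over $W$ with the help of Theorem~\ref{T3.1} and the real a priori bounds (to bound the distortion of the $g_W$ and the overlap losses) gives ${\rm area}(V_m\cap\mathbb{B}(1,r_n))\preccurlyeq\sigma^{\beta m}r_n^2$.

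Finally I would feed in the defining structure of $\mathcal{E}_0$ (this is where the largeness of the universal constant $C$ with $t_j-s_j>Cs_j$ in the definition of $\mathcal{E}_0$ is used, to keep all constants uniform in $n$). Choose $K_0>C_0(1+(\log M)^{2})$; then for $K>K_0$ no index $m$ with $s_n<m+1\le t_n$ lies in $\mathcal{B}_K$, and since $\log c_{l+1}\le\log a_{l+1-t_i}\preccurlyeq\sqrt{l+1-t_i}$ whenever $l+1>t_i$, an index $m\in\mathcal{B}_K$ with $m\ge s_n$ must satisfy $m\ge t_n+c'K$, while an index $m\in\mathcal{B}_K$ with $m<s_n$ must satisfy $m\ge c'K$ because $\log c_{m+1}=\mathcal{O}(\sqrt{m})$ there. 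Summing the two geometric series,
\[
{\rm area}(B_K^{(n)})\le\sum_{m\in\mathcal{B}_K}{\rm area}(V_m\cap\mathbb{B}(1,r_n))\preccurlyeq r_n^2\big(e^{-c(t_n-s_n+c'K)}+e^{-c'\beta K}\big)\preccurlyeq r_n^2\,e^{-c''K}
\]
for all $K>K_0$ and all large $n$, with universal constants; dividing by $r_n^2$ yields $\hat{\mu}_{r_n}\in\mathcal{F}(K_0,K_1,K_2)$ with $K_2=1/c''$ and $K_1$ the implied constant. The hard part is the second area bound above: controlling the low-generation but potentially high-dilatation mass spread among the infinitely many small drops accumulating at $1$ — essentially a form of Theorem~\ref{T3.1} localized to the scale $r_n$ — where the Petersen puzzle and the \'Swiatek-Herman real a priori bounds must do the work of bounding the distortion of the branches $F_{\alpha}^{\circ -k}$ on the drops so that the decisive factor $r_n^2$ survives.
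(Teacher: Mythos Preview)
Your framework (rescale, decompose by cell generation, use the $\mathcal{E}_0$ combinatorics) is the right one and matches the paper, and you correctly isolate the hard step. But the way you propose to handle that step has a real gap.

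The claimed localized bound ${\rm area}(V_m\cap\mathbb{B}(1,r_n))\preccurlyeq\sigma^{\beta m}r_n^2$ for $m<s_n$ does not follow from Theorem~\ref{T3.1} plus distortion control. Theorem~\ref{T3.1} only yields the \emph{global} estimate $\nu(\text{gen-}m\text{ band})\preccurlyeq\sigma^{\beta m}$; there is no mechanism to extract an extra factor $r_n^{2}$ from it. Your suggested remedy --- control the distortion of the branches $g_W$ on the drops via the real a priori bounds --- would, if it worked, give $\beta=1$ in Theorem~\ref{T3.1}; the loss $\beta<1$ is precisely the price of not having such distortion control. The same issue afflicts your first bound: for $m\ge s_n$ the drops contribution cannot be bounded by ``critical puzzle pieces and real a priori bounds'' in the way you sketch.

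The paper avoids this localization problem entirely. For high generations it uses the global bound $\nu(R_m)\le C_3(\sigma^{\beta})^{m}$ but only for $m\ge t_j$, and the constant $C$ in $t_j-s_j>Cs_j$ is chosen exactly so that $(\sigma^{\beta})^{t_j}$ beats $r_j^{-2}\asymp\lambda^{-2c_1 s_j}$; this is the content of Step~1 and the reason $C$ in the definition of $\mathcal{E}_0$ depends on $\beta,\sigma,\lambda$. For low generations $m<s_j$ it does \emph{not} estimate $V_m\cap\mathbb{B}(1,r_j)$ at all. Instead it observes that, since $r_j$ is chosen at a deeper scale $i_j=\lfloor c_1 s_j\rfloor$ (Step~2), any $z\in\mathbb{B}(1,r_j)$ whose orbit sees a cell of generation $<s_j$ must first escape through $U_0$ with $\Lambda_{\alpha_0}(z)>l_{s_j}^{+}$. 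The set of such escaping points is then bounded dynamically: Proposition~\ref{pr2.1} says the orbit of each escaping point passes through a quasi-log-arithmetic sequence of scales, and pulling back a fixed hyperbolic ball in $U_0$ along each of these visits (Steps~3--6) produces, at $z$, a nested family of Euclidean balls disjoint from the escaping set. Lemma~\ref{L1} then gives ${\rm area}(E_j)\le\lambda_2^{s_j}\,{\rm area}(\tilde{S}_{l_{n_j}})\asymp\lambda_2^{s_j}r_j^{2}$. It is this dynamical detour via Proposition~\ref{pr2.1} and Lemma~\ref{L1}, not any variant of Theorem~\ref{T3.1}, that produces the crucial $r_j^{2}$ factor for the low-generation part; you do not invoke either ingredient.
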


\begin{proof}
Since $\alpha=[b_1,b_2,\cdots]\in\mathcal{E}_0$, by definition
there exist $\theta=[a_1,a_2,\cdots]\in\mathcal{E}$, a positive integer $M$ and
two sequences of positive integers $\{s_j\}_{j=1}^{\infty}$ and $\{t_j\}_{j=1}^{\infty}$ such that
\begin{itemize}
\item for all $j\geq1$, $s_j<t_j<s_{j+1}$ and $t_j-s_j>Cs_j$, where $C$ will be determined in this proof;
\item for all $1\leq k\leq s_1$, $b_k\leq a_k$;
\item for all $j\geq1$, $$b_k\left\{\begin{matrix}\leq&M,&s_j<k\leq t_j\\
\leq&a_{k-t_j},&t_j<k\leq s_{j+1}.\end{matrix}\right.$$
\end{itemize}
Since $\theta\in\mathcal{E}$, there exists $C_1>0$ such that
$\log a_n\leq C_1\sqrt{n}$ for all $n\geq1$.
For all $j\geq1$, we define $i_j:=\left\lfloor c_1s_j\right\rfloor$ ($c_1>1$ will be given later) and
$r_j:=|x_{q_{i_j}}-1|$.
By the \'Swiatek-Herman real a priori bounds,
for all $j\geq1$, $r_j\geq C_2\lambda^{i_j}$, where $0<\lambda<1$ is a universal constant and
$C_2$ is a positive constant.

\vspace{0.2cm}
\noindent\emph{Step 1: Fix the universal constant $C$}

\vspace{0.2cm}
\noindent Let
$R_n=\cup_{m\geq n}\{\exp(\gamma):\gamma\ {\rm is\ an\ m-cell\ of\ \Gamma}\}$.
Then by (\#1) we have ${\rm area}(R_n)\preccurlyeq\sigma^n$,
and hence by Theorem \ref{T3.1} we have that for all $n\geq1$, $\nu(R_n)\leq C_3(\sigma^{\beta})^n$,
where $C_3$ is a positive constant. Then for all $n\geq t_j$,
\begin{align*}
\frac{\nu(R_n)}{{\rm area}(\mathbb{B}(1,r_j))}&\leq C_3\frac{(\sigma^{\beta})^n}{\pi r_j^2}\\
&\leq\frac{C_3}{C_2^2}\frac{(\sigma^{\beta})^{t_j}}{\pi\lambda^{2i_j}}\cdot(\sigma^{\beta})^{n-t_j}\\
&\leq\frac{C_3}{C_2^2}\frac{(\sigma^{\beta})^{(1+C)s_j}}{\pi\lambda^{2(c_1s_j+1)}}\cdot(\sigma^{\beta})^{n-t_j}\\
&\leq\frac{C_3}{C_2^2\pi\lambda^2}\cdot\left(\frac{\sigma^{\beta(1+C)}}{\lambda^{2c_1}}\right)^{s_j}\cdot(\sigma^{\beta})^{n-t_j}.
\end{align*}
We take $C$ large enough so that
\begin{equation}
C>\frac{2c_1\ln\lambda}{\beta\ln\sigma}-1,
\end{equation}
that is $\frac{\sigma^{\beta(1+C)}}{\lambda^{2c_1}}<1$.
Then for large enough $j$ and all $n\geq t_j$,
$$\nu(R_n)\leq\frac{1}{\pi\lambda^2}\cdot(\sigma^{\beta})^{n-t_j}\cdot{\rm area}(\mathbb{B}(1,r_j)).$$

\vspace{0.2cm}
\noindent\emph{Step 2: Fix the universal constant $c_1$}

\vspace{0.2cm}
\noindent By Proposition \ref{pr2.1},
there exists a sequence $\{z_n\}_{n=1}^{+\infty}\subset\overline{U_0}$ with a quasi-log-arithmetic sequence
$\{l_n:=|z_n-1|\}_{n=1}^{+\infty}$ such that
for any $N>0$ and $n>0$, if $z\in S_{l_{n+N}}$ with $\Lambda_{\alpha_0}(z)>l_n$, then
there exists a nonnegative integer $m$ such that
$$\{F_{\alpha}^{\comp s}(z)\}_{s=0}^m\subseteq\mathbb{D}_{1+\alpha_0}\ {\rm and}\
d_{\mathbb{C}\setminus\overline{\mathbb{D}}}(F_{\alpha}^{\comp m}(z),z_n)\preccurlyeq1.$$
By the \'Swiatek-Herman real a priori bounds, $\{|x_{q_n}-1|\}_{n\gg1}$ is a quasi-log-arithmetic sequence.
For large enough $j$, there exists $n_j$ such that
$$\mathbb{B}(1,r_j)\subseteq S_{l_{n_j}}\ {\rm and}\ r_j\asymp l_{n_j},$$
and hence for large enough $j$,
$$n_j\asymp-\log l_{n_j}\asymp-\log r_j=-\log|x_{q_{i_j}}-1|\asymp i_j.$$
This implies
\begin{equation}
\label{e4.1}n_j\asymp c_1s_j.
\end{equation}
For large enough $j$, there exists $n_j'$ such that
\begin{equation}
\label{e4.5}l_{n_j'}<{\rm diam}(S_{l_{n_j'}})<l_{s_j}^+\ {\rm and}\ l_{n_j'}\asymp l_{s_j}^+.
\end{equation}
This implies that for large enough $j$,
\begin{equation}
\label{e4.2}n_j'\asymp s_j.
\end{equation}
Let
$$T_j:=\{t:n_j'\leq t\leq n_j-1,s_j\leq t\leq i_j\}.$$
We take $c_1$ large enough so that
\begin{itemize}
\item for large enough $j$, we have $r_j<l_{s_j}^-$;
\item for large enough $j$, we have $\#|T_j|\asymp s_j$. (Thanks to (\ref{e4.1}) and (\ref{e4.2}))
\end{itemize}

\vspace{0.2cm}
\noindent\emph{Step 3: A sequence of hyperbolic balls contained in $U_0$}

\vspace{0.2cm}
\noindent For large enough $j$, we fix a point $z\in S_{l_{n_j}}$ with $\Lambda_{\alpha_0}(z)>l_{s_j}^+$,
then by (\ref{e4.5}), we have
$\Lambda_{\alpha_0}(z)>l_t$ for all $t$ with $n_j'\leq t\leq n_j-1$,
and hence for all $n_j'\leq t\leq n_j-1$,
there exists a nonnegative integer $m_t$ such that
\begin{equation}
\label{e4.7}
\{F_{\alpha}^{\comp s}(z)\}_{s=0}^{m_t}\subseteq\mathbb{D}_{1+\alpha_0}\ {\rm and}\
d_{\mathbb{C}\setminus\overline{\mathbb{D}}}(F_{\alpha}^{\comp m_t}(z),z_t)\preccurlyeq1.
\end{equation}
For large enough $j$, we can choose a sequence $\{\hat{\mathbb{B}}_t\}_{n_j'}^{n_j-1}$ of hyperbolic balls
with the same size with respect to the hyperbolic metric
on $\mathbb{C}\setminus\overline{\mathbb{D}}$ such that for all $n_j'\leq t\leq n_j-1$,
\begin{equation}
\label{e7.9}\hat{\mathbb{B}}_t\subseteq U_0\cap\Lambda_{\alpha_0}^{\comp-1}(]0,l_{s_j}^+[),\
d_{\mathbb{C}\setminus\overline{\mathbb{D}}}(\hat{\mathbb{B}}_t,z_t)\preccurlyeq1\ {\rm and}\
{\rm diam}_{\mathbb{C}\setminus\overline{\mathbb{D}}}(\hat{\mathbb{B}}_t)\asymp1.
\end{equation}
Let $\mathbb{B}_t$ be a hyperbolic ball with the same center and the half size as $\hat{\mathbb{B}}_t$ for all $n_j'\leq t\leq n_j-1$.
Thus by (\ref{e4.7}) and (\ref{e7.9}), for all $n_j'\leq t\leq n_j-1$, we have
$$d_{\mathbb{C}\setminus\overline{\mathbb{D}}}(F_{\alpha}^{\comp m_t}(z),\mathbb{B}_t)\preccurlyeq1.$$
In particular, for all $t\in T_j$,
\begin{equation}
\label{e4.9}d_{\mathbb{C}\setminus\overline{\mathbb{D}}}(F_{\alpha}^{\comp m_t}(z),\mathbb{B}_t)\preccurlyeq1.
\end{equation}

\vspace{0.2cm}
\noindent\emph{Step 4: Pull back the sequence of hyperbolic balls}

\vspace{0.2cm}
\noindent We need the following lemma to estimate hyperbolic distances. (see [Proposition 4.9, \cite{McMu}] and [Corollary 2.29, \cite{McM}] for the proof)
\begin{lemma}
\label{l6}Let $f: D\subset\mathbb{C}\setminus\overline{\mathbb{D}}$ be an inclusion and
$||f'||$ denote the norm of the derivative with respect to two hyperbolic metrics.
Then for any $x,x_1,x_2\in D$ with $\tilde{s}=d_{D}(x_1,x_2)$ is less than one half of injective radius of $x_1$ in $D$,
we have
$$||f'(x)||<K(s)<1\ {\rm and}\ \frac{1}{\tilde{K}(\tilde{s})}\leq\frac{||f'(x_1)||}{||f'(x_2)||}\leq\tilde{K}(\tilde{s})$$
where $s=d_{\mathbb{C}\setminus\overline{\mathbb{D}}}(x,(\mathbb{C}\setminus\overline{\mathbb{D}})\setminus D)$ and
$K(s), \tilde{K}(\tilde{s})$ are two positive functions.
\end{lemma}

From now on, we assume that $j$ is large enough.
We set
$$M_j:=\{m_t:t\in T_j\}=\{w_1,w_2,\cdots,w_{\#|M_j|}\}$$
with
$$w_1<w_2<\cdots<w_{\#|M_j|}.$$
Since $\#|T_j|\asymp s_j$ and (\ref{e4.7}), we have
\begin{equation}
\label{e6.8}\#|M_j|\asymp s_j.
\end{equation}
Fixing $j_0\geq2$,
we set
$$M^{j_0}_{j}:=\{w_{j_0},w_{2j_0},\cdots,w_{\left\lfloor\frac{\#|M_j|}{j_0}\right\rfloor j_0}\}.$$
For any $1\leq t\leq \left\lfloor\frac{\#|M_j|}{j_0}\right\rfloor$,
we set $m_{j_t}:=w_{tj_0}$ and hence by (\ref{e4.9}) we have
\begin{equation*}
d_{\mathbb{C}\setminus\overline{\mathbb{D}}}(F_{\alpha}^{\comp w_{tj_0}}(z),\mathbb{B}_{j_t})\preccurlyeq1.
\end{equation*}
Let $L_{j_t}\subset\mathbb{C}\setminus\overline{\mathbb{D}}$ be a curve of the length
$l_{\mathbb{C}\setminus\overline{\mathbb{D}}}(L_{j_t})\preccurlyeq1$
connecting $F_{\alpha}^{\comp w_{tj_0}}(z)$ and $\mathbb{B}_{j_t}$.
For any $1\leq t\leq \left\lfloor\frac{\#|M_j|}{j_0}\right\rfloor$ and $1\leq w\leq w_{tj_0}$,
we consider the pullback of $\{F_{\alpha}^{\comp w_{tj_0}}(z)\}\cup L_{j_t}\cup\mathbb{B}_{j_t}$
under $F_{\alpha}^{\comp w}$ along $F_{\alpha}^{\comp w_{tj_0}}(z)$, $F_{\alpha}^{\comp(w_{tj_0}-1)}(z)$,
$\cdots$, $F_{\alpha}^{\comp(w_{tj_0}-w)}(z)$.
By $\mathbb{B}_{j_t}^{-w}$ and $L_{j_t}^{-w}$ we denote pre-images of $\mathbb{B}_{j_t}$ and $L_{j_t}$ in the pullback respectively.
It follows from Lemma \ref{l6} and (\ref{e4.7}) that there exists a universal constant $\eta>1$ such that
for any $1\leq t\leq \left\lfloor\frac{\#|M_j|}{j_0}\right\rfloor$,
\begin{equation}
\label{e7.12}\eta^{j_0}\cdot\rho_{\mathbb{C}\setminus\overline{\mathbb{D}}}\left(F_{\alpha}^{\comp w_{(t-1)j_0}}(z)\right)\leq
\rho_{\mathbb{C}\setminus\overline{\mathbb{D}}}\left(F_{\alpha}^{\comp w_{tj_0}}(z)\right)
\cdot\left|\left(F_{\alpha}^{\comp(w_{tj_0}-w_{(t-1)j_0})}\right)'(z)\right|,
\end{equation}
where $w_0=0$ and $\rho_{\mathbb{C}\setminus\overline{\mathbb{D}}}$ is the hyperbolic metric density on $\mathbb{C}\setminus\overline{\mathbb{D}}$.
Next, we will prove that for all $1\leq t\leq \left\lfloor\frac{\#|M_j|}{j_0}\right\rfloor-1$,
$$\eta^{j_0}\cdot{\rm diam}_{\mathbb{C}\setminus\overline{\mathbb{D}}}(\mathbb{B}_{j_{t+1}}^{-w_{(t+1)j_0}})\preccurlyeq{\rm diam}_{\mathbb{C}\setminus\overline{\mathbb{D}}}(\mathbb{B}_{j_t}^{-w_{tj_0}}),$$
and for all $1\leq t\leq \left\lfloor\frac{\#|M_j|}{j_0}\right\rfloor$,
$$d_{\mathbb{C}\setminus\overline{\mathbb{D}}}(\mathbb{B}_{j_t}^{-w_{tj_0}},z)\preccurlyeq
{\rm diam}_{\mathbb{C}\setminus\overline{\mathbb{D}}}(\mathbb{B}_{j_t}^{-w_{tj_0}})\ {\rm and}\
{\rm diam}_{\mathbb{C}\setminus\overline{\mathbb{D}}}(\mathbb{B}_{j_t}^{-w_{tj_0}})\preccurlyeq1.$$
In fact,
by (\ref{e7.12}) and Lemma \ref{l6}, for all $1\leq t\leq \left\lfloor\frac{\#|M_j|}{j_0}\right\rfloor-1$, we have
\begin{equation}
\label{e7.10}\eta^{j_0}\cdot{\rm diam}_{\mathbb{C}\setminus\overline{\mathbb{D}}}(\mathbb{B}_{j_{t+1}}^{-(w_{(t+1)j_0}-w_{tj_0})})\preccurlyeq
{\rm diam}_{\mathbb{C}\setminus\overline{\mathbb{D}}}(\mathbb{B}_{j_{t+1}})=
{\rm diam}_{\mathbb{C}\setminus\overline{\mathbb{D}}}(\mathbb{B}_{j_t}).
\end{equation}
Observe
$$d_{\mathbb{C}\setminus\overline{\mathbb{D}}}(F_{\alpha}^{\comp w_{tj_0}}(z),\mathbb{B}_{j_{t+1}}^{-(w_{(t+1)j_0}-w_{tj_0})})\leq
l_{\mathbb{C}\setminus\overline{\mathbb{D}}}(L_{j_{t+1}}^{-(w_{(t+1)j_0}-w_{tj_0})})\preccurlyeq1$$
and
$$d_{\mathbb{C}\setminus\overline{\mathbb{D}}}(F_{\alpha}^{\comp w_{tj_0}}(z),\mathbb{B}_{j_t})\preccurlyeq1.$$
Then applying Lemma \ref{l6} to the pullback of two sides of (\ref{e7.10}) by $F_{\alpha}^{\comp w_{tj_0}}$, we get
$$\eta^{j_0}\cdot{\rm diam}_{\mathbb{C}\setminus\overline{\mathbb{D}}}(\mathbb{B}_{j_{t+1}}^{-w_{(t+1)j_0}})
\preccurlyeq{\rm diam}_{\mathbb{C}\setminus\overline{\mathbb{D}}}(\mathbb{B}_{j_t}^{-w_{tj_0}}).$$
For all $1\leq t\leq \left\lfloor\frac{\#|M_j|}{j_0}\right\rfloor$, by Lemma \ref{l6}, we have
$$d_{\mathbb{C}\setminus\overline{\mathbb{D}}}(\mathbb{B}_{j_t}^{-w_{tj_0}},z)\leq
l_{\mathbb{C}\setminus\overline{\mathbb{D}}}(L_{j_t}^{-w_{tj_0}})\preccurlyeq
{\rm diam}_{\mathbb{C}\setminus\overline{\mathbb{D}}}(\mathbb{B}_{j_t}^{-w_{tj_0}})$$
and by the Schwarz lemma
$${\rm diam}_{\mathbb{C}\setminus\overline{\mathbb{D}}}(\mathbb{B}_{j_t}^{-w_{tj_0}})\preccurlyeq1.$$

\vspace{0.2cm}
\noindent\emph{Step 5: Renormalization of the sequence pulled back}

\vspace{0.2cm}
\noindent For all $1\leq t\leq \left\lfloor\frac{\#|M_j|}{j_0}\right\rfloor$,
we let $\hat{\mathbb{B}}_{j_t}^{-w_{tj_0}}$ be the component of $F_{\alpha}^{\comp-w_{tj_0}}(\hat{\mathbb{B}}_{j_t})$
containing $\mathbb{B}_{j_t}^{-w_{tj_0}}$.
Since $\hat{\mathbb{B}}_{j_t}$ does't intersect the critical orbit of $F_{\alpha}$,
we have that $F_{\alpha}^{\comp-w_{tj_0}}$ has a univalent branch from $\hat{\mathbb{B}}_{j_t}$
to $\hat{\mathbb{B}}_{j_t}^{-w_{tj_0}}$, written as $h_{j_t}$.
Observe that $h_{j_t}(\mathbb{B}_{j_t})=\mathbb{B}_{j_t}^{-w_{tj_0}}$ and
${\rm diam}_{\mathbb{C}\setminus\overline{\mathbb{D}}}(\mathbb{B}_{j_t}^{-w_{tj_0}})\preccurlyeq1$.
Then by the Koebe distortion theorem
there exists a hyperbolic ball $B_{j_t}\subseteq\mathbb{B}_{j_t}^{-w_{tj_0}}$ with respect to
the hyperbolic metric on $\mathbb{C}\setminus\overline{\mathbb{D}}$ such that
$${\rm diam}_{\mathbb{C}\setminus\overline{\mathbb{D}}}(B_{j_t})\asymp
{\rm diam}_{\mathbb{C}\setminus\overline{\mathbb{D}}}(\mathbb{B}_{j_t}^{-w_{tj_0}}).$$
Thus by step 4 , for all $1\leq t\leq \left\lfloor\frac{\#|M_j|}{j_0}\right\rfloor-1$, we have
$$\eta^{j_0}\cdot{\rm diam}_{\mathbb{C}\setminus\overline{\mathbb{D}}}(B_{j_{t+1}})\preccurlyeq
{\rm diam}_{\mathbb{C}\setminus\overline{\mathbb{D}}}(B_{j_t})$$
and for all $1\leq t\leq \left\lfloor\frac{\#|M_j|}{j_0}\right\rfloor$,
$$d_{\mathbb{C}\setminus\overline{\mathbb{D}}}(B_{j_t},z)\preccurlyeq
{\rm diam}_{\mathbb{C}\setminus\overline{\mathbb{D}}}(B_{j_t})\ {\rm and}\
{\rm diam}_{\mathbb{C}\setminus\overline{\mathbb{D}}}(B_{j_t})<
{\rm diam}_{\mathbb{C}\setminus\overline{\mathbb{D}}}(\mathbb{B}_{j_t}^{-w_{tj_0}})\preccurlyeq1.$$

\vspace{0.2cm}
\noindent\emph{Step 6: Lift to the upper half plane}

\vspace{0.2cm}
\noindent Let $p(z)=e^{-iz}$ and $p^{\comp-1}_0$ be the univalent branch of $p^{\comp-1}$ on $\mathbb{C}\setminus(-\infty,0]$
whose value domain contains $0$.
For all $1\leq t\leq \left\lfloor\frac{\#|M_j|}{j_0}\right\rfloor$,
we set $\tilde{B}_{j_t}:=p^{\comp-1}_0(B_{j_t})$ and $\tilde{z}:=p^{\comp-1}_0(z)$.
Since $p$ is a local isometry from the upper half plane $\mathbb{H}$ to $\mathbb{C}\setminus\overline{\mathbb{D}}$,
we have that $\tilde{B}_{j_t}$ is a hyperbolic ball in the upper half plane.
Thus by step 5, for all $1\leq t\leq \left\lfloor\frac{\#|M_j|}{j_0}\right\rfloor-1$, we have
$$\eta^{j_0}\cdot{\rm diam}_{\mathbb{H}}(\tilde{B}_{j_{t+1}})\preccurlyeq
{\rm diam}_{\mathbb{H}}(\tilde{B}_{j_t})$$
and for all $1\leq t\leq \left\lfloor\frac{\#|M_j|}{j_0}\right\rfloor$,
$$d_{\mathbb{H}}(\tilde{B}_{j_t},\tilde{z})\preccurlyeq{\rm diam}_{\mathbb{H}}(\tilde{B}_{j_t})\ {\rm and}\
{\rm diam}_{\mathbb{H}}(\tilde{B}_{j_t})\preccurlyeq1.$$
Observe that a hyperbolic ball in $\mathbb{H}$ is an Euclidean ball in $\mathbb{H}$ and
for any two points $w_1,w_2\in\mathbb{H}$ with $d_{\mathbb{H}}(w_1,w_2)\preccurlyeq1$,
we have $d_{\mathbb{H}}(w_1,w_2)\asymp\frac{d(w_1,w_2)}{{\rm Im}(w_1)}$.
Thus for all $1\leq t\leq \left\lfloor\frac{\#|M_j|}{j_0}\right\rfloor-1$,
\begin{equation}
\label{e4.11}\eta^{j_0}\cdot{\rm diam}(\tilde{B}_{j_{t+1}})\preccurlyeq
{\rm diam}(\tilde{B}_{j_t})
\end{equation}
and for all $1\leq t\leq \left\lfloor\frac{\#|M_j|}{j_0}\right\rfloor$,
\begin{equation}
\label{e4.12}d(\tilde{B}_{j_t},\tilde{z})\preccurlyeq{\rm diam}(\tilde{B}_{j_t})\ {\rm and}\
{\rm diam}(\tilde{B}_{j_t})\preccurlyeq{\rm Im}(\tilde{z})<l_{n_j}.
\end{equation}

\vspace{0.2cm}
\noindent\emph{Step 7: Apply Lemma \ref{L1}}

\vspace{0.2cm}
\noindent Let $\tilde{S}_{l_{n_j}}$ be the component of $p^{\comp-1}(S_{l_{n_j}})$ containing $0$.
Then $\tilde{S}_{l_{n_j}}$ is a square as the following:
$$\tilde{S}_{l_{n_j}}=\{w:\left|{\rm Re}(w)\right|\leq l_{n_j},\left|{\rm Im}(w)\right|\leq l_{n_j}\}.$$
Since
$\{F_{\alpha}^{\comp j}(z)\}_{j=0}^{m_t}\subseteq\mathbb{D}_{1+\alpha_0}$ for all $t\in T_j$,
by (\ref{e7.9}) and (\ref{e4.9}) there exists a universal constant $c>1$ such that
\begin{equation}
\label{e7.13}\mathbb{B}_{j_t}^{-w}\subseteq\mathbb{D}_{1+c\alpha_0}
\end{equation}
for all $0\leq w\leq w_{tj_0}$.
Let
$$E_j:=\{z\in\tilde{S}_{l_{n_j}}:p(z)\in S_{l_{n_j}}\ {\rm with}\ \Lambda_{c\alpha_0}(p(z))>l_{s_j}^+\}.$$
Then
$$p(E_j)=\{z\in S_{l_{n_j}}:\Lambda_{c\alpha_0}(z)>l_{s_j}^+\}.$$
For all $1\leq t\leq \left\lfloor\frac{\#|M_j|}{j_0}\right\rfloor$,
since $F_{\alpha}^{\comp w_{tj_0}}(p(\tilde{B}_{j_t}))\subseteq\mathbb{B}_{j_t}\subset U_0\cap\Lambda_{\alpha_0}^{-1}(]0,l_{s_j}^+[)$ and
(\ref{e7.13}), we have $\tilde{B}_{j_t}\cap E_j=\emptyset$.
By (\ref{e4.11}) and (\ref{e4.12}), for large enough $j_0$, we have that
for all $1\leq t\leq \left\lfloor\frac{\#|M_j|}{j_0}\right\rfloor-1$,
$${\rm diam}(\tilde{B}_{j_{t+1}})\leq\lambda_1^{j_0}\cdot{\rm diam}(\tilde{B}_{j_t})$$
and for all $3\leq t\leq \left\lfloor\frac{\#|M_j|}{j_0}\right\rfloor$,
$$d(\tilde{B}_{j_t},\tilde{z})\preccurlyeq{\rm diam}(\tilde{B}_{j_t})\ {\rm and}\
{\rm diam}(\tilde{B}_{j_t})<\lambda_1^{j_0}\cdot l_{n_j},$$
where $0<\lambda_1<1$ is a universal constant.
Applying Lemma \ref{L1} to $\tilde{S}_{l_{n_j}}$ and $E_j$, by (\ref{e6.8})
we have
$${\rm area}(E_j)\leq
\lambda_2^{s_j}\cdot{\rm area}(\tilde{S}_{l_{n_j}}),$$
where $0<\lambda_2<1$ is a universal constant.
Then
${\rm area}(p(E_j))\leq c_2\cdot\lambda_2^{s_j}\cdot{\rm area}(S_{l_{n_j}})$,
where $c_2$ is a universal constant.
Since for large enough $j$,
$\mathbb{B}(1,r_j)\subseteq S_{l_{n_j}}\ {\rm and}\ r_j\asymp l_{n_j}$,
we have
$${\rm area}(p(E_j)\cap\mathbb{B}(1,r_j))\leq c_3\cdot\lambda_2^{s_j}\cdot{\rm area}(\mathbb{B}(1,r_j)),$$
where $c_3$ is a universal constant.

\vspace{0.2cm}
\noindent\emph{Step 8: Complete the proof}

\vspace{0.2cm}
\noindent Let
$$D_n:=\cup_{m<n}\{\exp(\gamma):\gamma\ {\rm is\ an\ m-cell\ of\ \Gamma}\},$$
$$D_n^j:=\{z\in\mathbb{B}(1,r_j):\exists k\geq0\ {\rm s.t.}\ F_{\alpha}^{\comp k}(z)\in D_n\}$$
and
$$R_n^j:=\{z\in\mathbb{B}(1,r_j):\exists k\geq0\ {\rm s.t.}\ F_{\alpha}^{\comp k}(z)\in R_n\}.$$
By Subsection \ref{ss5.2}, the dilatation $\frac{1+|\mu_0(z)|}{1-|\mu_0(z)|}$ on $D_n$ is
at most
$$\sup_{m\geq n}C_1'\left(1+(\log b_m)^2\right)
\leq\max\left\{C_1'\left(1+C_1^2n\right), C_1'\left(1+(\log M)^2\right)\right\}$$
with the constant $C_1'$ not depending on $n$.
Then if $K>C_1'\left(1+(\log M)^2\right)$,
since $C_1'\left(1+C_1^2\left\lfloor\frac{\frac{K}{C_1'}-1}{C_1^2}\right\rfloor\right)\leq K$,
we have
$$\left\{z\in\mathbb{B}(1,r_j):\frac{1+|\mu_{r_j}(z)|}{1-|\mu_{r_j}(z)|}>K\right\}\subseteq
R_{n_K+t_j-1}^j\cup(D_{s_j}^j\setminus D_{n_K}^j),$$
where $n_K=\left\lfloor\frac{\frac{K}{C_1'}-1}{C_1^2}\right\rfloor$. We take $K_0$ such that $K_0\geq\max\{C_1'\left(1+(\log M)^2\right),1\}$ and $n_{K_0}\geq1$.
Then for $K>K_0$, we have
\begin{equation}
\label{e7.15}{\rm area}\left\{z\in\mathbb{B}(1,r_j):\frac{1+|\mu_{r_j}(z)|}{1-|\mu_{r_j}(z)|}>K\right\}
\leq{\rm area}(R_{n_K+t_j-1}^j)+{\rm area}(D_{s_j}^j\setminus D_{n_K}^j).
\end{equation}
By Step 1, we have
\begin{equation}
\label{e7.16}{\rm area}(R_{n_K+t_j-1}^j)\leq
\nu(R_{n_K+t_j-1})\leq\frac{1}{\pi\lambda^2\sigma^{\beta}}\cdot(\sigma^{\beta})^{n_K}\cdot{\rm area}(\mathbb{B}(1,r_j)).
\end{equation}
By properties of $l_n^-$,
in Step $2$ the choice of $c_1$ makes sure $D_{s_j}\cap\mathbb{B}(1,r_j)=\emptyset$.
Then by properties of $l_n^+$,
$D_{s_j}^j\subseteq p(E_j)\cap\mathbb{B}(1,r_j)$.
Thus by Step 7, we have
\begin{equation}
\label{e7.17}{\rm area}(D_{s_j}^j\setminus D_{n_K}^j)\left\{\begin{matrix}
\leq c_3\cdot(\lambda_2)^{s_j}\cdot{\rm area}(\mathbb{B}(1,r_j)),&n_K\leq s_j\\
=0,\quad\quad\quad&n_K>s_j.\end{matrix}\right.
\end{equation}
By (\ref{e7.15}), (\ref{e7.16}) and (\ref{e7.17}), for $K>K_0$, we have
\begin{align*}
&\qquad\frac{{\rm area}\left\{z\in\mathbb{B}(1,r_j):\frac{1+|\mu_{r_j}(z)|}{1-|\mu_{r_j}(z)|}>K\right\}}{{\rm area}(\mathbb{B}(1,r_j)}\\
&\leq(\frac{1}{\pi\lambda^2\sigma^{\beta}}+c_3)\cdot(\max\{\sigma^{\beta},\lambda_2\})^{n_K}\\
&\leq(\frac{1}{\pi\lambda^2\sigma^{\beta}}+c_3)\cdot(\max\{\sigma^{\beta},\lambda_2\})^{\frac{\frac{K}{C_1'}-1}{C_1^2}-1}\\
&=\frac{\frac{1}{\pi\lambda^2\sigma^{\beta}}+c_3}{(\max\{\sigma^{\beta},\lambda_2\})^{\frac{1}{C_1^2}+1}}
\cdot e^{\log(\max\{\sigma^{\beta},\lambda_2\})\frac{K}{C_1'C_1^2}}.
\end{align*}
Thus by taking
$$K_0=K_0,\ K_1=\frac{\frac{1}{\pi\lambda^2\sigma^{\beta}}+c_3}{(\max\{\sigma^{\beta},\lambda_2\})^{\frac{1}{C_1^2}+1}}\
{\rm and}\ K_2=-\frac{C_1'C_1^2}{\log(\max\{\sigma^{\beta},\lambda_2\})},$$
the proof is completed.

\end{proof}

\begin{corollary}
\label{C2}Using the same notations as those in the proof of Proposition \ref{p1}, we have
$$\lim_{j\to\infty}\frac{{\rm area}(\mathbb{B}(1,r_j)\setminus K_{c\alpha_0}(F_{\alpha}))}{{\rm area}(\mathbb{B}(1,r_j))}=0.$$
\end{corollary}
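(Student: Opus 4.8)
The plan is to observe that the corollary is essentially a restatement of the area estimate obtained in Step 7 of the proof of Proposition \ref{p1}, once one notices that the escaping points of $F_{\alpha}$ lying in $\mathbb{B}(1,r_j)$ form a subset of the set $p(E_j)$ that is controlled there.

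First I would recall that, by the definition of the map $\Lambda_{c\alpha_0}:\mathbb{D}_{1+c\alpha_0}\to[0,+\infty]$, a point $z\in\mathbb{B}(1,r_j)$ satisfies $\Lambda_{c\alpha_0}(z)=+\infty$ precisely when $z\notin K_{c\alpha_0}(F_{\alpha})$. Since $\{l_n^+\}$ is a quasi-log-arithmetic sequence, in particular $l_{s_j}^+>0$, so every such escaping point trivially satisfies $\Lambda_{c\alpha_0}(z)>l_{s_j}^+$. Hence
$$\mathbb{B}(1,r_j)\setminus K_{c\alpha_0}(F_{\alpha})\subseteq\{z\in\mathbb{B}(1,r_j):\Lambda_{c\alpha_0}(z)>l_{s_j}^+\}.$$
Then, using that for all large enough $j$ one has $\mathbb{B}(1,r_j)\subseteq S_{l_{n_j}}$ and $r_j\asymp l_{n_j}$ (Step 2), the right-hand side is contained in $\{z\in S_{l_{n_j}}:\Lambda_{c\alpha_0}(z)>l_{s_j}^+\}\cap\mathbb{B}(1,r_j)$, which is exactly $p(E_j)\cap\mathbb{B}(1,r_j)$. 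Applying the bound ${\rm area}(p(E_j)\cap\mathbb{B}(1,r_j))\leq c_3\cdot\lambda_2^{s_j}\cdot{\rm area}(\mathbb{B}(1,r_j))$ from the end of Step 7 gives
$$\frac{{\rm area}(\mathbb{B}(1,r_j)\setminus K_{c\alpha_0}(F_{\alpha}))}{{\rm area}(\mathbb{B}(1,r_j))}\leq c_3\cdot\lambda_2^{s_j}$$
for all large enough $j$.

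To finish I would note that $\{s_j\}$ is strictly increasing (since $s_j<t_j<s_{j+1}$), so $s_j\to\infty$ as $j\to\infty$; as $0<\lambda_2<1$ is a universal constant, $c_3\lambda_2^{s_j}\to0$, which yields the claimed limit. No genuinely hard step is involved: the content is entirely imported from Step 7. The only point deserving a line of care is that the escaping condition must be taken relative to the enlarged disk $\mathbb{D}_{1+c\alpha_0}$ — this is exactly why inclusion (\ref{e7.13}) was established in Step 7, and it is the reason the corollary is stated with $K_{c\alpha_0}(F_{\alpha})$ rather than $K_{\alpha_0}(F_{\alpha})$.
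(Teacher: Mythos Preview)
Your proposal is correct and follows essentially the same approach as the paper: both arguments use the inclusion $\mathbb{B}(1,r_j)\setminus K_{c\alpha_0}(F_{\alpha})\subseteq p(E_j)\cap\mathbb{B}(1,r_j)$ together with the Step~7 bound ${\rm area}(p(E_j)\cap\mathbb{B}(1,r_j))\leq c_3\lambda_2^{s_j}\,{\rm area}(\mathbb{B}(1,r_j))$. The paper simply asserts the inclusion, while you supply the justification via $\Lambda_{c\alpha_0}(z)=+\infty>l_{s_j}^+$ and $\mathbb{B}(1,r_j)\subseteq S_{l_{n_j}}$; otherwise the proofs coincide.
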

\begin{proof}
By Step $7$ in the proof of Proposition \ref{p1}, we have
$${\rm area}(p(E_j)\cap\mathbb{B}(1,r_j))\leq c_3\cdot\lambda_2^{s_j}\cdot{\rm area}(\mathbb{B}(1,r_j)).$$
Since $\mathbb{B}(1,r_j)\setminus K_{c\alpha_0}(F_{\alpha})\subseteq p(E_j)\cap\mathbb{B}(1,r_j)$, we have
$${\rm area}(\mathbb{B}(1,r_j)\setminus K_{c\alpha_0}(F_{\alpha}))\leq
{\rm area}(p(E_j)\cap\mathbb{B}(1,r_j))\leq c_3\cdot\lambda_2^{s_j}\cdot{\rm area}(\mathbb{B}(1,r_j)).$$
This implies this corollary.
\end{proof}

\section{The proof of Theorem \ref{T1}}
\noindent\emph{Step 1: Coordinate decompositions}

\vspace{0.2cm}
\noindent By Proposition \ref{p1} and Corollary \ref{C2}, we can obtain that
there exists a sequence $\{r_n\}_{n=1}^{\infty}$ of positive real numbers with $\lim_{n\to\infty}r_n=0$
and three positive real numbers $K_0,K_1,K_2$ such that $\hat{\mu}_{r_n}\in\mathcal{F}(K_0,K_1,K_2)$ for large enough $n$
and
\begin{equation}
\label{e8.1}\lim_{n\to\infty}\frac{{\rm area}(\mathbb{B}(1,r_n)\setminus K_{\alpha_0}(F_{\alpha}))}{{\rm area}(\mathbb{B}(1,r_n))}=0.
\end{equation}
By Theorem \ref{TD1}, for all $n\geq1$, there is a David map $\phi_n:\mathbb{B}(1,1)\to\mathbb{B}(1,1)$ such that
$\phi_n(1)=1$ and $\mu_{\phi_n}=\hat{\mu}_{r_n}$.
For all $n\geq1$, we let
$$E_n:=\{z\in\mathbb{B}(1,1):r_n(z-1)+1\in\mathbb{B}(1,r_n)\setminus K_{\alpha_0}(F_{\alpha})\}.$$
It follows from (\ref{e8.1}) and Corollary \ref{c4.1} that
\begin{equation}
\label{e5.6}\lim_{n\to\infty}{\rm area}(\phi_n(E_n))=0.
\end{equation}

We define
$$T_n(\cdot):=\frac{\cdot-1}{r_n}+1:\mathbb{C}\to\mathbb{C}$$
and
$$\psi_{\alpha,n}(\cdot):=\phi_{\alpha}\comp T_n^{\comp-1}\comp\phi_n^{\comp-1}(\cdot):\mathbb{B}(1,1)\to\mathbb{C}.$$
Then the coordinate map $\phi_{\alpha}$ has the following decomposition:
$$\phi_{\alpha}(z)=\psi_{\alpha,n}\comp\phi_n\comp T_n(z),\ z\in\mathbb{B}(1,r_n).$$
Since $\mu_{\phi_n\comp T_n}(z)=\mu_0(z)$ and $\mu_{\phi_{\alpha}}(z)=\mu_0(z)$ on $\mathbb{B}(1,r_n)$,
by Theorem \ref{TD1} we have that
$\psi_{\alpha,n}$ is a conformal map on $\mathbb{B}(1,1)$.
By definition of $E_n$, we have
\begin{equation}
\label{e8.3}\psi_{\alpha,n}(\phi_n(E_n))=\phi_{\alpha}(\mathbb{B}(1,r_n))\setminus\phi_{\alpha}(K_{\alpha_0}(F_{\alpha})).
\end{equation}
Given $\alpha_1>0$, we let $\alpha_0$ small enough so that
$$\phi_{\alpha}(\mathbb{D}_{1+\alpha_0})\subseteq\Delta_{\alpha}(\alpha_1/2)\ {\rm and\ hence}\
\phi_{\alpha}(K_{\alpha_0}(F_{\alpha}))\subseteq K_{\alpha_1/2}(P_{\alpha}).$$

\vspace{0.2cm}
\noindent\emph{Step 2: Go to the critical point $1$ through $\phi_{\alpha}$}

\vspace{0.2cm}
\noindent By Proposition \ref{pr2.1},
there exists a sequence $\{z_n\}_{n=1}^{+\infty}\subset\overline{U_0}$ with a quasi-log-arithmetic sequence
$\{l_n:=|z_n-1|\}_{n=1}^{+\infty}$ such that
for any $N>0$ and $n>0$, if $z\in S_{l_{n+N}}$ with $\Lambda_{\alpha_0}(z)>l_n$, then
there exists a nonnegative integer $m(n,N,z)$ such that
$$S_{l_1}\cup\{F_{\alpha}^{\comp s}(z)\}_{s=0}^{m(n,N,z)}\subseteq\mathbb{D}_{1+\alpha_0}\ {\rm and}\
d_{\mathbb{C}\setminus\overline{\mathbb{D}}}(F_{\alpha}^{\comp m(n,N,z)}(z),z_n)\preccurlyeq1.$$
For all large enough $j$, there exists a positive integer $n_j$ with
$l_{n_j}\asymp r_j$ such that for all $N\geq1$ and all $z\in S_{n_j+N}$,
\begin{equation}
\label{e701}|F_{\alpha}^{\comp m(n_j,N,z)}(z)-1|<r_j/2.
\end{equation}
For any $w\in\partial\Delta_{\alpha}$, $\phi_{\alpha}^{\comp-1}(w)\in\mathbb{S}^1$
and there exists a positive integer $m_j$ such that
$$F_{\alpha}^{\comp m_j}\comp\phi_{\alpha}^{\comp-1}(w)\in(S_{l_{n_j+1}})^{\comp}.$$
Thus for any small enough square $S$ centering at $w$,
\begin{equation}
\label{e5.3}F_{\alpha}^{\comp m_j}\comp\phi_{\alpha}^{\comp-1}(S)\subseteq S_{l_{n_j+1}}
\end{equation}
and for $0\leq t\leq m_j$,
\begin{equation}
\label{e5.4}F_{\alpha}^{\comp t}\comp\phi_{\alpha}^{\comp-1}(S)\subseteq\mathbb{D}_{1+\alpha_0}.
\end{equation}
Given such a square $S$, for any $z\in S\setminus K_{\alpha_1}(P_{\alpha})$,
we have
$z\not\in\phi_{\alpha}(K_{\alpha_0}(F_{\alpha}))$, that is
$\phi_{\alpha}^{\comp-1}(z)\not\in K_{\alpha_0}(F_{\alpha}).$
Then by (\ref{e5.3}) and (\ref{e5.4}), we have
$$z':=F_{\alpha}^{\comp m_j}\comp\phi_{\alpha}^{\comp-1}(z)\in S_{l_{n_j+1}}\setminus K_{\alpha_0}(F_{\alpha}).$$
Then
$z'\in S_{l_{n_j+1}}$ with $\Lambda_{\alpha_0}(z')>l_{n_j}$
and hence
there exists a nonnegative integer $m:=m(n_j,1,z')$ such that
\begin{equation}
\label{e702}\{F_{\alpha}^{\comp s}(z')\}_{s=0}^m\subseteq\mathbb{D}_{1+\alpha_0}\ {\rm and}\
d_{\mathbb{C}\setminus\overline{\mathbb{D}}}(F_{\alpha}^{\comp m}(z'),z_{n_j})\preccurlyeq1.
\end{equation}
Since $\{F_{\alpha}^{\comp s}(z')\}_{s=0}^m\subseteq\mathbb{D}_{1+\alpha_0}$ and
$\phi_{\alpha}(\mathbb{D}_{1+\alpha_0})\subseteq\Delta_{\alpha}(\alpha_1/2)$, we have
\begin{equation}
\label{e8.6}\phi_{\alpha}\comp F_{\alpha}^{\comp s}(z')\in\Delta_{\alpha}(\alpha_1/2)\ (0\leq s\leq m).
\end{equation}
Thus by (\ref{e701}) and (\ref{e702}), we have that for all large enough $j$,
there exists a positive real number $v_j$ with $v_j\asymp r_j$ such that
$$\mathbb{B}(F_{\alpha}^{\comp m}(z'),v_j)\subseteq\mathbb{B}(1,r_j)\ {\rm and}\
\mathbb{B}(F_{\alpha}^{\comp m}(z'),v_j)\cap\mathbb{D}=\emptyset.$$
Then it follows from $\mathbb{B}(F_{\alpha}^{\comp m}(z'),v_j)\cap\mathbb{D}=\emptyset$ that
$\phi_{\alpha}(\mathbb{B}(F_{\alpha}^{\comp m}(z'),v_j))\cap\Delta_{\alpha}=\emptyset$.

\vspace{0.2cm}
\noindent\emph{Step 3: Pull back $\mathbb{B}(F_{\alpha}^{\comp m}(z'),v_j)$ through the coordinate decomposition}

\vspace{0.2cm}
\noindent By the last part of Corollary \ref{c4.1},
$\phi_j\comp T_j(\mathbb{B}(F_{\alpha}^{\comp m}(z'),v_j))$
contains $\mathbb{B}(\phi_j\comp T_j\comp F_{\alpha}^{\comp m}(z'),v)$,
where $v>0$ is a constant not depending on $j$ and $z$.
By (\ref{e5.6}), we have
\begin{equation}
\label{e5.7}\lim_{j\to\infty}{\rm area}\left(\mathbb{B}(\phi_j\comp T_j\comp F_{\alpha}^{\comp m}(z'),v)
\cap\phi_j(E_j)\right)=0,\ {\rm uniform\ on\ z.}
\end{equation}
By the distortion theorem of conformal maps,
$\psi_{\alpha,j}(\mathbb{B}(\phi_j\comp T_j\comp F_{\alpha}^{\comp m}(z'),v/2))$
contains a ball $\mathbb{B}(\phi_{\alpha}\comp F_{\alpha}^{\comp m}(z'),v_j')$ with
$$v_j'\asymp_{c_4}{\rm diam}(\psi_{\alpha,j}(\mathbb{B}(\phi_j\comp T_j\comp F_{\alpha}^{\comp m}(z'),v/2))),$$
where $c_4$ is a constant not depending on $j$ and $z$. Then by (\ref{e8.3}) and (\ref{e5.7}), we have
\begin{equation*}
\lim_{j\to\infty}\frac{{\rm area}\left(\mathbb{B}(\phi_{\alpha}\comp F_{\alpha}^{\comp m}(z'),v_j')
\setminus\phi_{\alpha}(K_{\alpha_0}(F_{\alpha}))\right)}{(v_j')^2}=0,\ {\rm uniform\ on\ z.}
\end{equation*}
This implies
\begin{equation}
\label{e5.8}\lim_{j\to\infty}\frac{{\rm area}\left(\mathbb{B}(\phi_{\alpha}\comp F_{\alpha}^{\comp m}(z'),v_j')
\setminus K_{\alpha_1/2}(P_{\alpha})\right)}{(v_j')^2}=0,\ {\rm uniform\ on\ z.}
\end{equation}

\vspace{0.2cm}
\noindent\emph{Step 4: Pull back to $z$ by $P_{\alpha}$}

\vspace{0.2cm}
\noindent Since $\phi_{\alpha}(\mathbb{B}(F_{\alpha}^{\comp m}(z'),v_j))\cap\Delta_{\alpha}=\emptyset$,
we have
\begin{equation}
\label{e8.9}\mathbb{B}(\phi_{\alpha}\comp F_{\alpha}^{\comp m}(z'),v_j')\cap\Delta_{\alpha}=\emptyset.
\end{equation}
Then there exists a univalent branch $h$ of $P_{\alpha}^{\comp-(m+m_j)}$ on $\mathbb{B}(\phi_{\alpha}\comp F_{\alpha}^{\comp m}(z'),v_j')$
with $h\comp\phi_{\alpha}\comp F_{\alpha}^{\comp m}(z')=z$.
Combining (\ref{e5.4}) and (\ref{e8.6}),
we have
$$\{P_{\alpha}^{\comp s}(z)\}_{s=0}^{m+m_j}\subseteq\Delta_{\alpha}(\alpha_1/2).$$
Then it follows from the Koebe distortion theorem that
there exists a real number $\hat{v}>1$ (not depending on $j$ and $z$) such that
\begin{equation}
\label{e5.9}h(\mathbb{B}(\phi_{\alpha}\comp F_{\alpha}^{\comp m}(z'),v_j'/\hat{v})\cap K_{\alpha_1/2}(P_{\alpha}))
\subseteq K_{\alpha_1}(P_{\alpha}).
\end{equation}
Then there exists a positive real number $v_j''$ such that
\begin{equation}
\label{e5.10}\mathbb{B}(z,v_j'')\subseteq h(\mathbb{B}(\phi_{\alpha}\comp F_{\alpha}^{\comp m}(z'),v_j'/\hat{v}))\
{\rm and}\ v_j''\asymp_{c_5}{\rm diam}(h(\mathbb{B}(\phi_{\alpha}\comp F_{\alpha}^{\comp m}(z'),v_j'/\hat{v}))),
\end{equation}
where $c_5$ is a constant not depending on $j$ and $z$.
By $(\ref{e5.8})$, $(\ref{e5.9})$ and $(\ref{e5.10})$, we have
\begin{equation*}
\lim_{j\to\infty}\frac{{\rm area}\left(\mathbb{B}(z,v_j'')
\setminus K_{\alpha_1}(P_{\alpha})\right)}{(v_j'')^2}=0,\ {\rm uniform\ on\ z.}
\end{equation*}
Moreover, $v_j''$ is less than the side length of $S$, for $\mathbb{B}(z,v_j'')\cap\Delta_{\alpha}=\emptyset$.
At last, Lemma \ref{l2} gives
$$\lim_{{\rm diam}(S)\to0}\frac{{\rm area}(S\setminus K_{\alpha_1}(P_{\alpha}))}{{\rm area}(S)}=0,$$
which completes the proof of this theorem.

\section{The proof of Theorem \ref{T1.2}}
This section is devoted to proving Theorem \ref{T1.2}.
Given any irrational number $\alpha$, we divide this proof into two parts:
\begin{itemize}
\item[(1)] $1$ is a measurable deep of $K_{\alpha_0}(F_{\alpha})$;
\item[(2)] Any point in $\mathbb{S}^1$ is a Lebesgue density point of $K_{\alpha_0}(F_{\alpha})$.
\end{itemize}

\vspace{0.2cm}
\noindent{\bf The proof of (1):}
By Proposition \ref{pr2.1},
there exists a sequence $\{z_n\}_{n=1}^{+\infty}\subset\overline{U_0}$ with
a quasi-log-arithmetic sequence $\left\{l_n:=|z_n-1|\right\}_{n=1}^{\infty}$ such that
for any $n>1$,
if $z\in S_{l_n}$ with $\Lambda_{\alpha_0}(z)=+\infty$, then for any $t$ with $1\leq t<n$,
there exists a nonnegative integer $m$ such that
$$\{F_{\alpha}^{\comp j}(z)\}_{j=0}^m\subseteq\mathbb{D}_{1+\alpha_0}\ {\rm and}\
d_{\mathbb{C}\setminus\overline{\mathbb{D}}}(F_{\alpha}^{\comp m}(z),z_t)\preccurlyeq1.$$
The next proof is similar to Steps $3-7$ in the proof of Proposition \ref{p1}.

\vspace{0.2cm}
\noindent\emph{Step 1: A sequence of hyperbolic balls contained in $U_0$}

\vspace{0.2cm}
\noindent We fix $n>1$ and a point $z\in S_{l_n}$ with $\Lambda_{\alpha_0}(z)=+\infty$.
Then for all $t$ with $1\leq t<n$,
there exists a nonnegative integer $m_t$ such that
\begin{equation}
\label{e9.1}
\{F_{\alpha}^{\comp s}(z)\}_{s=0}^{m_t}\subseteq\mathbb{D}_{1+\alpha_0}\ {\rm and}\
d_{\mathbb{C}\setminus\overline{\mathbb{D}}}(F_{\alpha}^{\comp m_t}(z),z_t)\preccurlyeq1.
\end{equation}
We can choose a sequence $\{\hat{\mathbb{B}}_t\}_1^{n-1}$ of hyperbolic balls
with the same size with respect to the hyperbolic metric
on $\mathbb{C}\setminus\overline{\mathbb{D}}$ such that for all $1\leq t\leq n-1$,
\begin{equation}
\label{e9.2}\hat{\mathbb{B}}_t\subseteq U_0\cap\mathbb{D}_{1+\alpha_0},\
d_{\mathbb{C}\setminus\overline{\mathbb{D}}}(\hat{\mathbb{B}}_t,z_t)\preccurlyeq1\ {\rm and}\
{\rm diam}_{\mathbb{C}\setminus\overline{\mathbb{D}}}(\hat{\mathbb{B}}_t)\asymp1.
\end{equation}
Let $\mathbb{B}_t$ be a hyperbolic ball with the same center and the half size as $\hat{\mathbb{B}}_t$ for all $1\leq t\leq n-1$.
Thus by (\ref{e9.1}) and (\ref{e9.2}), for all $1\leq t\leq n-1$, we have
\begin{equation}
\label{e9.3} d_{\mathbb{C}\setminus\overline{\mathbb{D}}}(F_{\alpha}^{\comp m_t}(z),\mathbb{B}_t)\preccurlyeq1.
\end{equation}

\vspace{0.2cm}
\noindent\emph{Step 2: Pull back the sequence of hyperbolic balls}

\vspace{0.2cm}
\noindent From now on, we assume that $n$ is large enough.
We set
$$M_n:=\{m_t:1\leq t\leq n-1\}=\{w_1,w_2,\cdots,w_{\#|M_n|}\}$$
with
$$w_1<w_2<\cdots<w_{\#|M_n|}.$$
By (\ref{e9.1}), we have
\begin{equation}
\label{e10}\#|M_n|\asymp n.
\end{equation}
Fixing $j_0\geq2$,
we set
$$M^{j_0}_n:=\{w_{j_0},w_{2j_0},\cdots,w_{\left\lfloor\frac{\#|M_n|}{j_0}\right\rfloor j_0}\}.$$
For any $1\leq t\leq \left\lfloor\frac{\#|M_n|}{j_0}\right\rfloor$,
we set $m_{j_t}:=w_{tj_0}$ and hence by (\ref{e9.3}) we have
\begin{equation*}
d_{\mathbb{C}\setminus\overline{\mathbb{D}}}(F_{\alpha}^{\comp w_{tj_0}}(z),\mathbb{B}_{j_t})\preccurlyeq1.
\end{equation*}
Let $L_{j_t}\subset\mathbb{C}\setminus\overline{\mathbb{D}}$ be a curve of the length
$l_{\mathbb{C}\setminus\overline{\mathbb{D}}}(L_{j_t})\preccurlyeq1$
connecting $F_{\alpha}^{\comp w_{tj_0}}(z)$ and $\mathbb{B}_{j_t}$.
For any $1\leq t\leq \left\lfloor\frac{\#|M_n|}{j_0}\right\rfloor$ and $1\leq w\leq w_{tj_0}$,
we consider the pullback of $\{F_{\alpha}^{\comp w_{tj_0}}(z)\}\cup L_{j_t}\cup\mathbb{B}_{j_t}$
under $F_{\alpha}^{\comp w}$ along $F_{\alpha}^{\comp w_{tj_0}}(z)$, $F_{\alpha}^{\comp(w_{tj_0}-1)}(z)$,
$\cdots$, $F_{\alpha}^{\comp(w_{tj_0}-w)}(z)$.
By $\mathbb{B}_{j_t}^{-w}$ and $L_{j_t}^{-w}$ we denote pre-images of $\mathbb{B}_{j_t}$ and $L_{j_t}$ in the pullback respectively.
It follows from Lemma \ref{l6} and (\ref{e9.1}) that there exists a universal constant $\eta>1$ such that
for any $1\leq t\leq\left\lfloor\frac{\#|M_n|}{j_0}\right\rfloor$,
\begin{equation}
\label{e9.5}\eta^{j_0}\cdot\rho_{\mathbb{C}\setminus\overline{\mathbb{D}}}\left(F_{\alpha}^{\comp w_{(t-1)j_0}}(z)\right)\leq
\rho_{\mathbb{C}\setminus\overline{\mathbb{D}}}\left(F_{\alpha}^{\comp w_{tj_0}}(z)\right)
\cdot\left|\left(F_{\alpha}^{\comp(w_{tj_0}-w_{(t-1)j_0})}\right)'(z)\right|,
\end{equation}
where $w_0=0$.
Next, we will prove that for all $1\leq t\leq \left\lfloor\frac{\#|M_j|}{j_0}\right\rfloor-1$,
$$\eta^{j_0}\cdot{\rm diam}_{\mathbb{C}\setminus\overline{\mathbb{D}}}(\mathbb{B}_{j_{t+1}}^{-w_{(t+1)j_0}})\preccurlyeq{\rm diam}_{\mathbb{C}\setminus\overline{\mathbb{D}}}(\mathbb{B}_{j_t}^{-w_{tj_0}}),$$
and for all $1\leq t\leq \left\lfloor\frac{\#|M_j|}{j_0}\right\rfloor$,
$$d_{\mathbb{C}\setminus\overline{\mathbb{D}}}(\mathbb{B}_{j_t}^{-w_{tj_0}},z)\preccurlyeq
{\rm diam}_{\mathbb{C}\setminus\overline{\mathbb{D}}}(\mathbb{B}_{j_t}^{-w_{tj_0}})\ {\rm and}\
{\rm diam}_{\mathbb{C}\setminus\overline{\mathbb{D}}}(\mathbb{B}_{j_t}^{-w_{tj_0}})\preccurlyeq1.$$
In fact,
by (\ref{e9.5}) and Lemma \ref{l6}, for all $1\leq t\leq \left\lfloor\frac{\#|M_j|}{j_0}\right\rfloor-1$, we have
\begin{equation}
\label{e9.6}\eta^{j_0}\cdot{\rm diam}_{\mathbb{C}\setminus\overline{\mathbb{D}}}(\mathbb{B}_{j_{t+1}}^{-(w_{(t+1)j_0}-w_{tj_0})})\preccurlyeq
{\rm diam}_{\mathbb{C}\setminus\overline{\mathbb{D}}}(\mathbb{B}_{j_{t+1}})=
{\rm diam}_{\mathbb{C}\setminus\overline{\mathbb{D}}}(\mathbb{B}_{j_t}).
\end{equation}
Observe
$$d_{\mathbb{C}\setminus\overline{\mathbb{D}}}(F_{\alpha}^{\comp w_{tj_0}}(z),\mathbb{B}_{j_{t+1}}^{-(w_{(t+1)j_0}-w_{tj_0})})\leq
l_{\mathbb{C}\setminus\overline{\mathbb{D}}}(L_{j_{t+1}}^{-(w_{(t+1)j_0}-w_{tj_0})})\preccurlyeq1$$
and
$$d_{\mathbb{C}\setminus\overline{\mathbb{D}}}(F_{\alpha}^{\comp w_{tj_0}}(z),\mathbb{B}_{j_t})\preccurlyeq1.$$
Then applying Lemma \ref{l6} to (\ref{e9.6}), we have
$$\eta^{j_0}\cdot{\rm diam}_{\mathbb{C}\setminus\overline{\mathbb{D}}}(\mathbb{B}_{j_{t+1}}^{-w_{(t+1)j_0}})
\preccurlyeq{\rm diam}_{\mathbb{C}\setminus\overline{\mathbb{D}}}(\mathbb{B}_{j_t}^{-w_{tj_0}}).$$
For all $1\leq t\leq \left\lfloor\frac{\#|M_j|}{j_0}\right\rfloor$, by Lemma \ref{l6}, we have
$$d_{\mathbb{C}\setminus\overline{\mathbb{D}}}(\mathbb{B}_{j_t}^{-w_{tj_0}},z)\leq
l_{\mathbb{C}\setminus\overline{\mathbb{D}}}(L_{j_t}^{-w_{tj_0}})\preccurlyeq
{\rm diam}_{\mathbb{C}\setminus\overline{\mathbb{D}}}(\mathbb{B}_{j_t}^{-w_{tj_0}})$$
and by the Schwarz lemma
$${\rm diam}_{\mathbb{C}\setminus\overline{\mathbb{D}}}(\mathbb{B}_{j_t}^{-w_{tj_0}})\preccurlyeq1.$$

\vspace{0.2cm}
\noindent\emph{Step 3: Renormalization of the sequence pulled back}

\vspace{0.2cm}
\noindent For all $1\leq t\leq \left\lfloor\frac{\#|M_j|}{j_0}\right\rfloor$,
we let $\hat{\mathbb{B}}_{j_t}^{-w_{tj_0}}$ be the component of $F_{\alpha}^{\comp-w_{tj_0}}(\hat{\mathbb{B}}_{j_t})$
containing $\mathbb{B}_{j_t}^{-w_{tj_0}}$.
Since $\hat{\mathbb{B}}_{j_t}$ does't intersect the critical orbit of $F_{\alpha}$,
we have that $F_{\alpha}^{\comp-w_{tj_0}}$ has a univalent branch from $\hat{\mathbb{B}}_{j_t}$
to $\hat{\mathbb{B}}_{j_t}^{-w_{tj_0}}$, written as $h_{j_t}$.
Observe that $h_{j_t}(\mathbb{B}_{j_t})=\mathbb{B}_{j_t}^{-w_{tj_0}}$ and
${\rm diam}_{\mathbb{C}\setminus\overline{\mathbb{D}}}(\mathbb{B}_{j_t}^{-w_{tj_0}})\preccurlyeq1$.
Then by the Koebe distortion theorem
there exists a hyperbolic ball $B_{j_t}\subseteq\mathbb{B}_{j_t}^{-w_{tj_0}}$ with respect to
the hyperbolic metric on $\mathbb{C}\setminus\overline{\mathbb{D}}$ such that
$${\rm diam}_{\mathbb{C}\setminus\overline{\mathbb{D}}}(B_{j_t})\asymp
{\rm diam}_{\mathbb{C}\setminus\overline{\mathbb{D}}}(\mathbb{B}_{j_t}^{-w_{tj_0}}).$$
Thus by step 2, for all $1\leq t\leq \left\lfloor\frac{\#|M_j|}{j_0}\right\rfloor-1$, we have
$$\eta^{j_0}\cdot{\rm diam}_{\mathbb{C}\setminus\overline{\mathbb{D}}}(B_{j_{t+1}})\preccurlyeq
{\rm diam}_{\mathbb{C}\setminus\overline{\mathbb{D}}}(B_{j_t})$$
and for all $1\leq t\leq \left\lfloor\frac{\#|M_j|}{j_0}\right\rfloor$,
$$d_{\mathbb{C}\setminus\overline{\mathbb{D}}}(B_{j_t},z)\preccurlyeq
{\rm diam}_{\mathbb{C}\setminus\overline{\mathbb{D}}}(B_{j_t})\ {\rm and}\
{\rm diam}_{\mathbb{C}\setminus\overline{\mathbb{D}}}(B_{j_t})<
{\rm diam}_{\mathbb{C}\setminus\overline{\mathbb{D}}}(\mathbb{B}_{j_t}^{-w_{tj_0}})\preccurlyeq1.$$

\vspace{0.2cm}
\noindent\emph{Step 4: Lift to the upper half plane}

\vspace{0.2cm}
\noindent Let $p^{\comp-1}_0$ be the univalent branch of $p^{\comp-1}$ on $\mathbb{C}\setminus(-\infty,0]$
whose value domain contains $0$.
For all $1\leq t\leq \left\lfloor\frac{\#|M_j|}{j_0}\right\rfloor$,
we set $\tilde{B}_{j_t}:=p^{\comp-1}_0(B_{j_t})$ and $\tilde{z}:=p^{\comp-1}_0(z)$.
Since $p$ is a local isometry from the upper half plane $\mathbb{H}$ to $\mathbb{C}\setminus\overline{\mathbb{D}}$,
we have that $\tilde{B}_{j_t}$ is a hyperbolic ball in the upper half plane.
Thus by step 3, for all $1\leq t\leq \left\lfloor\frac{\#|M_j|}{j_0}\right\rfloor-1$, we have
$$\eta^{j_0}\cdot{\rm diam}_{\mathbb{H}}(\tilde{B}_{j_{t+1}})\preccurlyeq
{\rm diam}_{\mathbb{H}}(\tilde{B}_{j_t})$$
and for all $1\leq t\leq \left\lfloor\frac{\#|M_j|}{j_0}\right\rfloor$,
$$d_{\mathbb{H}}(\tilde{B}_{j_t},\tilde{z})\preccurlyeq{\rm diam}_{\mathbb{H}}(\tilde{B}_{j_t})\ {\rm and}\
{\rm diam}_{\mathbb{H}}(\tilde{B}_{j_t})\preccurlyeq1.$$
Observe that a hyperbolic ball in $\mathbb{H}$ is an Euclidean ball in $\mathbb{H}$ and
for any two points $w_1,w_2\in\mathbb{H}$ with $d_{\mathbb{H}}(w_1,w_2)\preccurlyeq1$,
we have $d_{\mathbb{H}}(w_1,w_2)\asymp\frac{d(w_1,w_2)}{{\rm Im}(w_1)}$.
Thus for all $1\leq t\leq \left\lfloor\frac{\#|M_j|}{j_0}\right\rfloor-1$,
\begin{equation}
\label{e9.7}\eta^{j_0}\cdot{\rm diam}(\tilde{B}_{j_{t+1}})\preccurlyeq
{\rm diam}(\tilde{B}_{j_t})
\end{equation}
and for all $1\leq t\leq \left\lfloor\frac{\#|M_j|}{j_0}\right\rfloor$,
\begin{equation}
\label{e9.8}d(\tilde{B}_{j_t},\tilde{z})\preccurlyeq{\rm diam}(\tilde{B}_{j_t})\ {\rm and}\
{\rm diam}(\tilde{B}_{j_t})\preccurlyeq{\rm Im}(\tilde{z})<l_n.
\end{equation}

\vspace{0.2cm}
\noindent\emph{Step 5: Apply Lemma \ref{L1}}

\vspace{0.2cm}
\noindent Let $\tilde{S}_{l_n}$ be the component of $p^{\comp-1}(S_{l_n})$ containing $0$.
Then $\tilde{S}_{l_n}$ is a square as the following:
$$\tilde{S}_{l_n}=\{w:\left|{\rm Re}(w)\right|\leq l_n,\left|{\rm Im}(w)\right|\leq l_n\}.$$
Since
$\{F_{\alpha}^{\comp j}(z)\}_{j=0}^{m_t}\subseteq\mathbb{D}_{1+\alpha_0}$ for all $1\leq t\leq n-1$,
by (\ref{e9.1}) and (\ref{e9.2}) there exists a universal constant $c>1$ such that
\begin{equation}
\label{e9.9}\mathbb{B}_{j_t}^{-w}\subseteq\mathbb{D}_{1+c\alpha_0}
\end{equation}
for all $0\leq w\leq w_{tj_0}$.
Let
$$E_n:=\{z\in\tilde{S}_{l_n}:p(z)\in S_{l_n}\ {\rm with}\ \Lambda_{c\alpha_0}(p(z))=+\infty\}.$$
Then
$$p(E_n)=\{z\in S_{l_n}:\Lambda_{c\alpha_0}(z)=+\infty\}.$$
For all $1\leq t\leq \left\lfloor\frac{\#|M_n|}{j_0}\right\rfloor$,
since $F_{\alpha}^{\comp w_{tj_0}}(p(\tilde{B}_{j_t}))\subseteq\mathbb{B}_{j_t}\subset U_0$ and
(\ref{e9.9}), we have $\tilde{B}_{j_t}\cap E_j=\emptyset$.
By (\ref{e9.7}) and (\ref{e9.8}), for large enough $j_0$, we have that
for all $1\leq t\leq \left\lfloor\frac{\#|M_j|}{j_0}\right\rfloor-1$,
$${\rm diam}(\tilde{B}_{j_{t+1}})\leq\lambda_1^{j_0}\cdot{\rm diam}(\tilde{B}_{j_t})$$
and for all $3\leq t\leq \left\lfloor\frac{\#|M_j|}{j_0}\right\rfloor$,
$$d(\tilde{B}_{j_t},\tilde{z})\preccurlyeq{\rm diam}(\tilde{B}_{j_t})\ {\rm and}\
{\rm diam}(\tilde{B}_{j_t})<\lambda_1^{j_0}\cdot l_n,$$
where $0<\lambda_1<1$ is a universal constant.
Applying Lemma \ref{L1} to $\tilde{S}_{l_n}$ and $E_n$,
we have
$${\rm area}(E_n)\leq\lambda_2^n\cdot{\rm area}(\tilde{S}_{l_n}),$$
where $0<\lambda_2<1$ is a universal constant.
Then
$${\rm area}(p(E_n))\leq c_2\cdot\lambda_2^n\cdot{\rm area}(S_{l_n}),$$
where $c_2$ is a universal constant.
For all small enough $r>0$, there exists a positive integer $n(r)>0$ such that
$\mathbb{B}(1,r)\subset S_{l_{n(r)}}$ and $r\asymp l_{n(r)}$. Then
$${\rm area}(\mathbb{B}(1,r)\setminus K_{c\alpha_0}(F_{\alpha}))={\rm area}(p(E_{n(r)})\cap\mathbb{B}(1,r))\leq
c_3\cdot\lambda_2^{n(r)}\cdot{\rm area}(\mathbb{B}(1,r)),$$
where $c_3>0$ is a universal constant.
Since $r\asymp l_{n(r)}$, we have $n(r)\asymp-\log r$ and hence
$${\rm area}(\mathbb{B}(1,r)\setminus K_{c\alpha_0}(F_{\alpha}))\leq
c_3\cdot r^{-c_4\log\lambda_2}\cdot{\rm area}(\mathbb{B}(1,r)),$$
where $c_4>0$ is a universal constant.
By the arbitrariness of $\alpha_0$, the proof of (1) is completed.

\vspace{0.2cm}
\noindent{\bf The proof of (2):}
By Proposition \ref{pr2.1},
there exists a sequence $\{z_n\}_{n=1}^{+\infty}\subset\overline{U_0}$ with
a quasi-log-arithmetic sequence $\left\{l_n:=|z_n-1|\right\}_{n=1}^{\infty}$ such that
for any $n\geq 1$,
if $z\in S_{l_{n+1}}$ with $\Lambda_{\alpha_0}(z)>l_n$, then there exists a nonnegative integer $m$ such that
$$S_{l_1}\cup\{F_{\alpha}^{\comp j}(z)\}_{j=0}^m\subseteq\mathbb{D}_{1+\alpha_0/2}\ {\rm and}\
d_{\mathbb{C}\setminus\overline{\mathbb{D}}}(F_{\alpha}^{\comp m}(z),z_n)\preccurlyeq1.$$
By (1), we have
\begin{equation}
\label{e9.10}\lim_{n\to\infty}\frac{{\rm area}(S_{l_n}\setminus K_{\alpha_0/2}(F_{\alpha}))}{{\rm area}(S_{l_n})}=0.
\end{equation}
Given any $w\in\mathbb{S}^1$, for large enough $n$, we have that
for any small enough square $S$ centering at $w$, there exists a positive integer $m_n$ such that
\begin{equation*}
F_{\alpha}^{\comp m_n}(S)\subseteq S_{l_{n+1}}
\end{equation*}
and for $0\leq t\leq m_n$,
\begin{equation}
\label{e9.11}F_{\alpha}^{\comp t}(S)\subseteq\mathbb{D}_{1+\alpha_0/2}.
\end{equation}
Given such a square $S$, for any $z\in S\setminus K_{\alpha_0}(F_{\alpha})$,
we have
$$z':=F_{\alpha}^{\comp m_n}(z)\in S_{l_{n+1}}\setminus K_{\alpha_0}(F_{\alpha}).$$
Then
$z'\in S_{l_{n+1}}$ with $\Lambda_{\alpha_0}(z')>l_n$
and hence
there exists a nonnegative integer $m$ such that
\begin{equation}
\label{e9.12}\{F_{\alpha}^{\comp s}(z')\}_{s=0}^m\subseteq\mathbb{D}_{1+\alpha_0/2}\ {\rm and}\
d_{\mathbb{C}\setminus\overline{\mathbb{D}}}(F_{\alpha}^{\comp m}(z'),z_n)\preccurlyeq1.
\end{equation}
Then for all large enough $n$, $F_{\alpha}^{\comp m}(z')\in S_{l_{n-N}}$ with $N\asymp1$.
Thus there exists a positive real number $v_n$ with $v_n\asymp l_n$ such that
$\mathbb{B}(F_{\alpha}^{\comp m}(z'),v_n)\subseteq S_{l_{n-N-1}}$ and
$\mathbb{B}(F_{\alpha}^{\comp m}(z'),v_n)\cap\mathbb{D}=\emptyset$.
Since $\mathbb{B}(F_{\alpha}^{\comp m}(z'),v_n)\cap\mathbb{D}=\emptyset$,
there exists a univalent branch $h$ of $F_{\alpha}^{\comp-(m+m_n)}$ on $\mathbb{B}(F_{\alpha}^{\comp m}(z'),v_n)$
with $h\comp F_{\alpha}^{\comp m}(z')=z$.
Combining (\ref{e9.11}) and (\ref{e9.12}),
we have
$$\{F_{\alpha}^{\comp s}(z)\}_{s=0}^{m+m_n}\subseteq\mathbb{D}_{1+\alpha_0/2}.$$
It follows from Koebe distortion theorem that
there exists a real number $\hat{v}>1$ (not depending on $n$ and $z$) such that
\begin{equation}
\label{e9.13}h(\mathbb{B}(F_{\alpha}^{\comp m}(z'),v_n/\hat{v})\cap K_{\alpha_0/2}(F_{\alpha}))
\subseteq K_{\alpha_0}(F_{\alpha}).
\end{equation}
Then there exists a positive real number $v_n'$ such that
\begin{equation}
\label{e9.14}\mathbb{B}(z,v_n')\subseteq h(\mathbb{B}(F_{\alpha}^{\comp m}(z'),v_n/\hat{v}))\
{\rm and}\ v_n'\asymp_{c_5}{\rm diam}(h(\mathbb{B}(F_{\alpha}^{\comp m}(z'),v_n/\hat{v}))),
\end{equation}
where $c_5$ is a constant not depending on $n$ and $z$.
By $(\ref{e9.10})$, $(\ref{e9.13})$ and $(\ref{e9.14})$, we have
\begin{equation*}
\lim_{n\to\infty}\frac{{\rm area}\left(\mathbb{B}(z,v_n')
\setminus K_{\alpha_0}(F_{\alpha})\right)}{(v_n')^2}=0,\ {\rm uniform\ on\ z.}
\end{equation*}
Moreover, $v_n'$ is less than the side length of $S$, for $\mathbb{B}(z,v_n')\cap\mathbb{D}=\emptyset$.
At last, Lemma \ref{l2} gives
$$\lim_{{\rm diam}(S)\to0}\frac{{\rm area}(S\setminus K_{\alpha_0}(F_{\alpha}))}{{\rm area}(S)}=0,$$
which completes the proof of (2).

\section{Appendix A}
In this appendix, we prove the following five properties about $S^n$, $F^n$ and $E$ in the proof of Lemma \ref{L1}.

\noindent{\bf Properties:}
\begin{itemize}
\item[(1)] $\{S^n\}_{n=1}^{N_0}$ is decreasing,
\item[(2)] for all $1\leq n\leq N_0$, $E\subseteq S^n$,
\item[(3)] for all $1\leq n\leq N_0-1$, $F^n\cap E=\emptyset$ and $F^n\subseteq(S^n)^{\circ}$,
\item[(4)] for all $x\in\cup_{n=1}^{N_0-1}F^n$, at most two of $F^1,F^2,\cdots,F^{N_0-1}$ contain $x$,
\item[(5)] for all $1\leq n\leq N_0-1$,
$$\zeta\cdot{\rm area}(S^n)\leq{\rm area}(F^n)\leq {\rm area}(S^n),$$
where $\zeta:=\frac{1}{2\pi\cdot M^2\cdot(\frac{\sqrt{2}}{2}+\sqrt{2}(c+2)M)^2}$.
\end{itemize}
\begin{proof}
(1) For any $1\leq n\leq N_0-1$, we will prove $S^{n+1}\subseteq S^n$.
To prove it, we only need to prove that
for any admissible square $S_{i_1i_2\cdots i_k}$ of generation $n+1$,
there exists $1\leq\tilde{k}<k$ such that  $S_{i_1i_2\cdots i_{\tilde{k}}}$
is an admissible square of generation $n$.
In fact, since $S_{i_1i_2\cdots i_k}$ is an admissible square of generation $n+1$,
we have
\[\sqrt{2}\frac{l}{M^{k+1}}<R^{n+1}_{i_1i_2\cdots i_k}\leq\sqrt{2}\frac{l}{M^k}.\]
By (\ref{e3.2}),
$$R^n_{i_1i_2\cdots i_k}\geq M\cdot R^{n+1}_{i_1i_2\cdots i_k}>\sqrt{2}\frac{l}{M^k}.$$
Since $R^n_{i_1}\leq\frac{l}{M}$, we have $k\geq2$.
Since $S_{i_1i_2\cdots i_k}\subseteq S_{i_1i_2\cdots i_{k-1}}$, we have
$$R^n_{i_1i_2\cdots i_{k-1}}\geq R^n_{i_1i_2\cdots i_k}>\sqrt{2}\frac{l}{M^k}.$$
Then there exists $1\leq\tilde{k}\leq k-1$ be the smallest positive integer such that
\begin{equation}
\label{F4.2}R^n_{i_1i_2\cdots i_{\tilde{k}}}>\sqrt{2}\frac{l}{M^{\tilde{k}+1}}.
\end{equation}
If $\tilde{k}=1$, then
$$R_{i_1}^n>\sqrt{2}\frac{l}{M^2}.$$
Again, since
$$R_{i_1}^n\leq\frac{l}{M}<\sqrt{2}\frac{l}{M},$$
we have that $S_{i_1}$ is an admissible square of generation $n$.
If $\tilde{k}\geq2$, then
\begin{equation}
\label{F4.3}R^n_{i_1i_2\cdots i_j}\leq\sqrt{2}\frac{l}{M^{j+1}},\ 1\leq j\leq\tilde{k}-1.
\end{equation}
It follows from
$$R^n_{i_1i_2\cdots i_{\tilde{k}-1}}\leq\sqrt{2}\frac{l}{M^{\tilde{k}}}\ {\rm and}\
R^n_{i_1i_2\cdots i_{\tilde{k}}}\leq R^n_{i_1i_2\cdots i_{\tilde{k}-1}}$$
that
\begin{equation}
\label{F4.4}R^n_{i_1i_2\cdots i_{\tilde{k}}}\leq\sqrt{2}\frac{l}{M^{\tilde{k}}}.
\end{equation}
Combining \ref{F4.2}, \ref{F4.3} and \ref{F4.4}, $S_{i_1i_2\cdots i_{\tilde{k}}}$ is an admissible square of generation $n$.

(2) For any $1\leq n\leq N_0$, we will prove $E\subseteq S^n$.
In fact, for any $x\in E$, we let $k$ be a positive integer such that
$$\tilde{r}_n(x)>\sqrt{2}\frac{l}{M^k}.$$ Without loss of generality,
we assume $x\in S_{i_1i_2\cdots i_k}$. Thus
$$R^n_{i_1i_2\cdots i_k}>\sqrt{2}\frac{l}{M^k}.$$
Next, by discussion as the same as that of (1), we have that
there exists $1\leq\tilde{k}<k$ such that
$S_{i_1i_2\cdots i_{\tilde{k}}}$ is an admissible square of generation $n$.
Then
$$x\in S_{i_1i_2\cdots i_k}\subseteq S_{i_1i_2\cdots i_{\tilde{k}}}\subseteq S^n.$$
Thus
$$E\subseteq S^n.$$

(3) For all $1\leq n\leq N_0-1$ and any $x\in F^n$,
by definition of $F^n$, there exists an admissible square $S_{i_1i_2\cdots i_k}$ of generation $n$
such that one of the following two conditions holds:
\begin{itemize}
\item[(${\rm a_1}$)] there exists an admissible square $S_{i_1'i_2'\cdots i_{k'}'}$ ($\subseteq S_{i_1i_2\cdots i_k\frac{M^2+1}{2}}$)
of generation $n+1$ such that $x\in F_{i_1'i_2'\cdots i_{k'}'}$;
\item[(${\rm a_2}$)] $x\in S_{i_1i_2\cdots i_k\frac{M^2+1}{2}}$ and for any admissible square
$S_{i_1'i_2'\cdots i_{k'}'}$ ($\subseteq S_{i_1i_2\cdots i_k\frac{M^2+1}{2}}$)
of generation $n+1$, $x\not\in S_{i_1'i_2'\cdots i_{k'}'}$.
\end{itemize}
If (${\rm a_1}$) holds, then $x\not\in E$.
Assume (${\rm a_2}$) holds. If $x\in E$, then
we let $k_1$ $(>k+1)$ be a positive integer such that
$$\tilde{r}_{n+1}(x)>\sqrt{2}\frac{l}{M^{k_1}}.$$
It follows that there exists a square of form $S_{i_1i_2\cdots i_k\frac{M^2+1}{2}i_{k+2}i_{k+3}\cdots i_{k_1}}$
such that
$$x\in S_{i_1i_2\cdots i_k\frac{M^2+1}{2}i_{k+2}i_{k+3}\cdots i_{k_1}}$$
and
$$R^{n+1}_{i_1i_2\cdots i_k\frac{M^2+1}{2}i_{k+2}i_{k+3}\cdots i_{k_1}}>\sqrt{2}\frac{l}{M^{k_1}}.$$
Set $i_{k+1}:=\frac{M^2+1}{2}$ and then by discussion as the same as that of (1), we have that
there exists $1\leq\tilde{k}<k_1$ such that
$S_{i_1i_2\cdots i_{\tilde{k}}}$ is an admissible square of generation $n+1$.
Then
$$R^n_{i_1i_2\cdots i_{\tilde{k}}}\geq M\cdot
R^{n+1}_{i_1i_2\cdots i_{\tilde{k}}}>\sqrt{2}\frac{l}{M^{\tilde{k}}}.$$
Since $S_{i_1i_2\cdots i_k}$ is an admissible square of generation $n$,
we have $\tilde{k}\geq k+1$ and hence
$$x\in S_{i_1i_2\cdots i_{\tilde{k}}}\subseteq S_{i_1i_2\cdots i_k\frac{M^2+1}{2}}.$$
This contradicts with Condition (${\rm a_2}$).
Thus $x\not\in E$.
Then we prove that for all $1\leq n\leq N_0-1$, $F^n\cap E=\emptyset$.

Next, we prove that for all $1\leq n\leq N_0-1$, $F^n\subseteq(S^n)^{\circ}$.
Let $x\in F^n$. By definition, there exists an admissible square $S_{i_1i_2\cdots i_k}$
of generation $n$ such that one of the following two conditions holds:
\begin{itemize}
\item[(A)] $x\in S_{i_1i_2\cdots i_k\frac{M^2+1}{2}}$;
\item[(B)] $S_{i_1i_2\cdots i_k\frac{M^2+1}{2}}\cap E\not=\emptyset$ and
there exists an admissible square $S_{i_1'i_2'\cdots i_{k'}'}$ ($\subseteq S_{i_1i_2\cdots i_k\frac{M^2+1}{2}}$) such that
$x\in F_{i_1'i_2'\cdots i_{k'}'}$.
\end{itemize}
If (A) holds, then $x\in S_{i_1i_2\cdots i_k\frac{M^2+1}{2}}\subseteq S_{i_1i_2\cdots i_k}^{\circ}\subseteq(S^n)^{\circ}$.
If (B) holds, by (b') in the definition of $F_{i_1'i_2'\cdots i_{k'}'}$,
the distance between $F_{i_1'i_2'\cdots i_{k'}'}$ and $S_{i_1'i_2'\cdots i_{k'}'}$
is less than or equal to $\sqrt{2}(c+1)\frac{l}{M^{k'}}\leq\sqrt{2}(c+1)\frac{l}{M^{k+1}}$.
Then the distance between $F_{i_1'i_2'\cdots i_{k'}'}$ and $S_{i_1i_2\cdots i_k\frac{M^2+1}{2}}$
is less than or equal to $\sqrt{2}(c+1)\frac{l}{M^{k+1}}$.
Observe that side lengths of $F_{i_1'i_2'\cdots i_{k'}'}$ and $S_{i_1i_2\cdots i_k\frac{M^2+1}{2}}$
are less than or equal to $\frac{l}{M^{k+1}}$.
Then the distance between any point of $F_{i_1'i_2'\cdots i_{k'}'}$ and the center of $S_{i_1i_2\cdots i_k}$
is less than $(\frac{2\sqrt{2}}{M}+\frac{\sqrt{2}(c+1)}{M})\frac{l}{M^k}<\frac{l}{2M^k}$ (Thanks to (\ref{F1})).
Thus $F_{i_1'i_2'\cdots i_{k'}'}\subseteq S_{i_1i_2\cdots i_k}^{\circ}$.
This implies $x\in(S_{i_1i_2\cdots i_k})^{\circ}\subseteq(S^n)^{\circ}$.
Thus $F^n\subseteq(S^n)^{\circ}$.

(4) {\bf Claim $3$:}
If $S_{i_1i_2\cdots i_k}$ is an admissible square of generation $n$ with $1\leq n\leq N_0$,
then for any $1\leq\tilde{k}<k$,
$S_{i_1i_2\cdots i_{\tilde{k}}}$ is not an admissible square of generation $m$ with $n\leq m\leq N_0$.

In fact, if not, then
\begin{equation}
\label{F4.5}R^m_{i_1i_2\cdots i_{\tilde{k}}}>\sqrt{2}\frac{l}{M^{\tilde{k}+1}}.
\end{equation}
Since $S_{i_1i_2\cdots i_k}$ is an admissible square of generation $n$,
we have
$$R^n_{i_1i_2\cdots i_{\tilde{k}}}\leq\sqrt{2}\frac{l}{M^{\tilde{k}+1}}$$
and hence
\begin{equation}
\label{F4.6}R^m_{i_1i_2\cdots i_{\tilde{k}}}\leq\frac{1}{M^{m-n}}R^n_{i_1i_2\cdots i_{\tilde{k}}}
\leq\sqrt{2}\frac{l}{M^{\tilde{k}+m-n+1}}.
\end{equation}
(\ref{F4.5}) contradicts with (\ref{F4.6}).
Thus for any $1\leq\tilde{k}<k$, $S_{i_1i_2\cdots i_{\tilde{k}}}$ is not an admissible square of generation $m$
with $n\leq m\leq N_0$.

Let $x\in\cup_{n=1}^{N_0-1}F^n$ and let $j_0$ be the smallest positive integer in $\{n:x\in F^n\}$.
By definition of $F^{j_0}$, we have one of the following two conditions holds:
\begin{itemize}
\item[(A')] there exists an admissible square $S_{i_1i_2\cdots i_k}$ of generation $j_0$ such that $x\in S_{i_1i_2\cdots i_k\frac{M^2+1}{2}}$
and for any admissible square
$S_{i_1'i_2'\cdots i_{k'}'}$ ($\subseteq S_{i_1i_2\cdots i_k\frac{M^2+1}{2}}$) of generation $j_0+1$,
$x\not\in S_{i_1'i_2'\cdots i_{k'}'}$;
\item[(B')] there exists an admissible square $S_{i_1i_2\cdots i_k}$ of generation $j_0+1$
such that $x\in F_{i_1i_2\cdots i_k}$.
\end{itemize}
We suppose that (A') holds.
By Claim $3$, we have the following fact: if $S_{i_1'i_2'\cdots i_{k'}'}$ is an admissible square of generation $j_0+1$,
then one of the following two cases holds:
\begin{itemize}
\item $S_{i_1'i_2'\cdots i_{k'}'}\cap(S_{i_1i_2\cdots i_k\frac{M^2+1}{2}})^{\circ}=\emptyset$;
\item $S_{i_1'i_2'\cdots i_{k'}'}\subseteq S_{i_1i_2\cdots i_k\frac{M^2+1}{2}}$ and
$x\not\in S_{i_1'i_2'\cdots i_{k'}'}$.
\end{itemize}
If $S_{j_1'j_2'\cdots j_{t'}'}$ is an admissible square of generation $n$ with $j_0+1\leq n\leq N_0-1$,
similar to that in the proof of (1),
we have that there exists an admissible square $S_{i_1'i_2'\cdots i_{k'}'}$ of generation $j_0+1$
such that $S_{j_1'j_2'\cdots j_{t'}'}\subseteq S_{i_1'i_2'\cdots i_{k'}'}$.
Together with the above fact, this implies that
if $S_{i_1'i_2'\cdots i_{k'}'}$ is an admissible square of generation $n$ with $j_0+1\leq n\leq N_0-1$,
then one of the following two cases holds:
\begin{itemize}
\item $S_{i_1'i_2'\cdots i_{k'}'}\cap(S_{i_1i_2\cdots i_k\frac{M^2+1}{2}})^{\circ}=\emptyset$;
\item $S_{i_1'i_2'\cdots i_{k'}'}\subseteq S_{i_1i_2\cdots i_k\frac{M^2+1}{2}}$ and
$x\not\in S_{i_1'i_2'\cdots i_{k'}'}$.
\end{itemize}
Then for any admissible square $S_{i_1'i_2'\cdots i_{k'}'}$ of generation $n$ with $j_0+1\leq n\leq N_0-1$,
we have $x\not\in S_{i_1'i_2'\cdots i_{k'}'}\ {\rm or}\ x\in\partial S_{i_1'i_2'\cdots i_{k'}'}.$
Thus the distance between $x$ and the center of $S_{i_1'i_2'\cdots i_{k'}'}$ is greater than or equal to $\frac{l}{2M^{k'}}$.
This implies that $x\not\in S_{i_1'i_2'\cdots i_{k'}'\frac{M^2+1}{2}}$
and for any admissible square $S_{j_1j_2\cdots j_t}$ ($\subseteq S_{i_1'i_2'\cdots i_{k'}'\frac{M^2+1}{2}}$) of generation $n+1$,
$x\not\in F_{j_1j_2\cdots j_t}$.
Then by the definition of $F^n$,
we have that for any $j_0+1\leq n\leq N_0-1$,
$x\not\in F^n$.

Next, assume (B') holds.
We let $F_{i_1i_2\cdots i_k}=S_{j_1j_2\cdots j_kj_{k+1}}$.
We firstly consider the case: $S_{j_1j_2\cdots j_kj_{k+1}}\subseteq S_{i_1i_2\cdots i_k}$.
For all $j_0+1\leq n\leq N_0-1$, by Claim $3$, we have that
for any admissible $S_{i_1'i_2'\cdots i_{k'}'}$ of generation $n$,
we have that one of the following two cases occurs:
\begin{itemize}
\item[(C)] $S_{i_1'i_2'\cdots i_{k'}'}\subseteq S_{i_1i_2\cdots i_k}$, that is $k'\geq k$, $i_1'=i_1,\cdots,i_k'=i_k$;
\item[(D)] $S_{i_1'i_2'\cdots i_{k'}'}\cap(S_{i_1i_2\cdots i_k})^{\circ}=\emptyset.$
\end{itemize}
If (C) holds and $n\geq j_0+2$, then $i_1'=i_1,i_2'=i_2,\cdots,i_k'=i_k$ and $k'\geq k+1$.
Thus
$$(S_{j_1j_2\cdots j_kj_{k+1}})^{\circ}\cap S_{i_1'i_2'\cdots i_{k'}'}=\emptyset.$$
If (D) holds, it follows from $S_{j_1j_2\cdots j_kj_{k+1}}\subseteq S_{i_1i_2\cdots i_k}$ and
$S_{i_1'i_2'\cdots i_{k'}'}\cap(S_{i_1i_2\cdots i_k})^{\circ}=\emptyset$ that
$$S_{i_1'i_2'\cdots i_{k'}'}\cap(S_{j_1j_2\cdots j_kj_{k+1}})^{\circ}=\emptyset.$$
Thus for all $j_0+2\leq n\leq N_0-1$ and any admissible square $S_{i_1'i_2'\cdots i_{k'}'}$
of generation $n$, we have
$$(F_{i_1i_2\cdots i_k})^{\circ}\cap S_{i_1'i_2'\cdots i_{k'}'}=\emptyset.$$
Then by the definition of $S^n$, we have
$$F_{i_1i_2\cdots i_k}\cap(S^n)^{\circ}=\emptyset.$$
By (3), we have
$$F_{i_1i_2\cdots i_k}\cap F^n=\emptyset.$$
Thus for all $j_0+2\leq n\leq N_0-1$, $x\not\in F^n$.

Next, we consider the other case: $S_{j_1j_2\cdots j_kj_{k+1}}\not\subseteq S_{i_1i_2\cdots i_k}$,
that is $S_{j_1j_2\cdots j_kj_{k+1}}\cap(S_{i_1i_2\cdots i_k})^{\circ}=\emptyset$.
For any $j_0+1\leq n\leq N_0-1$ such that $S_{j_1j_2\cdots j_k}$ is not an admissible square of generation $n$
and any admissible square $S_{i_1'i_2'\cdots i_{k'}'}$ of generation $n$,
we will prove that
the distance between $S_{j_1j_2\cdots j_kj_{k+1}}$ and $S_{i_1'i_2'\cdots i_{k'}'\frac{M^2+1}{2}}$ is
greater than $\sqrt{2}(c+2)\frac{l}{M^{k'+1}}$.
In fact, if $S_{j_1j_2\cdots j_kj_{k+1}}\not\subseteq S_{i_1'i_2'\cdots i_{k'}'}$,
that is $S_{j_1j_2\cdots j_kj_{k+1}}\cap(S_{i_1'i_2'\cdots i_{k'}'})^{\circ}=\emptyset$,
then the distance between $S_{j_1j_2\cdots j_kj_{k+1}}$ and $S_{i_1'i_2'\cdots i_{k'}'\frac{M^2+1}{2}}$ is
greater than or equal to $\frac{l}{2M^{k'}}-\frac{l}{2M^{k'+1}}>\sqrt{2}(c+2)\frac{l}{M^{k'+1}}$ (Thanks to (\ref{F1})).
If
$S_{j_1j_2\cdots j_kj_{k+1}}\subseteq S_{i_1'i_2'\cdots i_{k'}'}$,
then by Claim $3$ and $S_{j_1j_2\cdots j_kj_{k+1}}\cap(S_{i_1i_2\cdots i_k})^{\circ}=\emptyset$,
we have
\begin{equation}
\label{F4.7}S_{i_1'i_2'\cdots i_{k'}'}\cap(S_{i_1i_2\cdots i_k})^{\circ}=\emptyset.
\end{equation}
Since $S_{j_1j_2\cdots j_kj_{k+1}}\subseteq S_{i_1'i_2'\cdots i_{k'}'}$ and
$S_{j_1j_2\cdots j_k}$ is not an admissible square of generation $n$,
there exists $1\leq\tilde{k}\leq k-1$ such that
\begin{equation}
\label{F4.8}S_{i_1'i_2'\cdots i_{k'}'}=S_{j_1j_2\cdots j_{\tilde{k}}},
\end{equation}
that is
$k'=\tilde{k}$, $i_1'=j_1, i_2'=j_2,\cdots, i_{k'}'=j_{\tilde{k}}$.
By the definition of $F_{i_1i_2\cdots i_k}$, we have that
the distance between $S_{j_1j_2\cdots j_kj_{k+1}}$ and $S_{i_1i_2\cdots i_k}$
is less than or equal to $\sqrt{2}(c+1)\frac{l}{M^k}$.
Then by (\ref{F4.7}) and (\ref{F4.8}),
we have that the distance between $S_{j_1j_2\cdots j_kj_{k+1}}$ and
$S_{j_1j_2\cdots j_{\tilde{k}}\frac{M^2+1}{2}}$ ($=S_{i_1'i_2'\cdots i_{k'}'\frac{M^2+1}{2}}$)
is more than
$\frac{l}{2M^{\tilde{k}}}-\frac{l}{2M^{\tilde{k}+1}}-\sqrt{2}(c+1)\frac{l}{M^k}-\sqrt{2}\frac{l}{M^{k+1}}>
\sqrt{2}(c+2)\frac{l}{M^{\tilde{k}+1}}=\sqrt{2}(c+2)\frac{l}{M^{k'+1}}$ (Thanks to (\ref{F1})).
Thus for any $j_0+1\leq n\leq N_0-1$ such that $S_{j_1j_2\cdots j_k}$ is not an admissible square of generation $n$
and any admissible square $S_{i_1'i_2'\cdots i_{k'}'}$ of generation $n$,
we have that the distance between $S_{j_1j_2\cdots j_kj_{k+1}}$ and
$S_{i_1'i_2'\cdots i_{k'}'\frac{M^2+1}{2}}$($=S_{j_1j_2\cdots j_{\tilde{k}}\frac{M^2+1}{2}}$) is
greater than $\sqrt{2}(c+2)\frac{l}{M^{k'+1}}$. Again since
for any admissible square $S_{j_1'j_2'\cdots j_{t'}'}$($\subseteq S_{j_1j_2\cdots j_{\tilde{k}}\frac{M^2+1}{2}}$)
of generation $n+1$, the distance between $F_{j_1'j_2'\cdots j_{t'}'}$ and
$S_{j_1j_2\cdots j_{\tilde{k}}\frac{M^2+1}{2}}$ ($=S_{i_1'i_2'\cdots i_{k'}'\frac{M^2+1}{2}}$)
is less than or equal to $\sqrt{2}(c+1)\frac{l}{M^{\tilde{k}+1}}\leq\sqrt{2}(c+2)\frac{l}{M^{k'+1}}-{\rm diam}(F_{j_1'j_2'\cdots j_{t'}'})$.
Thus $S_{j_1j_2\cdots j_kj_{k+1}}\cap F_{j_1'j_2'\cdots j_{t'}'}=\emptyset$.
This implies that for all $j_0+1\leq n\leq N_0-1$ such that $S_{j_1j_2\cdots j_k}$ is not an admissible square of generation $n$,
$S_{j_1j_2\cdots j_kj_{k+1}}\cap F^n=\emptyset$ and hence $x\not\in F^n$.
Thus $x$ is contained in at most two of $F^1,F^2,\cdots,F^{N_0-1}$.

(5) It follows from (3) that ${\rm area}(F^n)\leq {\rm area}(S^n)$ holds immediately.
Next, we prove
$$\zeta\cdot{\rm area}(S^n)\leq{\rm area}(F^n).$$
It is sufficient for completing the proof to prove that
for all $1\leq n\leq N_0-1$ and all admissible square $S_{i_1i_2\cdots i_k}$ of generation $n$,
$${\rm area}(F^n\cap S_{i_1i_2\cdots i_k})\geq\zeta\cdot{\rm area}(S_{i_1i_2\cdots i_k}).$$

For all $1\leq n\leq N_0-1$ and any admissible square $S_{i_1i_2\cdots i_k}$ of generation $n$,
if $S_{i_1'i_2'\cdots i_{k'}'}$ ($\subseteq S_{i_1i_2\cdots i_k\frac{M^2+1}{2}}$) is an admissible square of generation $n+1$,
by definition of $F_{i_1'i_2'\cdots i_{k'}'}$, we have that the distance between $F_{i_1'i_2'\cdots i_{k'}'}$ and
$S_{i_1i_2\cdots i_k\frac{M^2+1}{2}}$ is less than or equal to $\sqrt{2}(c+1)\frac{l}{M^{k+1}}$.
Since
$$\sqrt{2}(c+1)\frac{l}{M^{k+1}}+\sqrt{2}\frac{l}{2M^{k+1}}+\sqrt{2}\frac{l}{M^{k+2}}<\frac{l}{2M^k},$$
we have
$$F_{i_1'i_2'\cdots i_{k'}'}\subseteq(S_{i_1i_2\cdots i_k})^{\circ}.$$
Then by the arbitrariness of $S_{i_1i_2\cdots i_k}$, we have the fact (*) that $x\in F^n\cap S_{i_1i_2\cdots i_k}$ if and only if
one of the following two conditions holds:
\begin{itemize}
\item there exists an admissible square $S_{i_1'i_2'\cdots i_{k'}'}$ ($\subseteq S_{i_1i_2\cdots i_k\frac{M^2+1}{2}}$)
of generation $n+1$ such that $x\in F_{i_1'i_2'\cdots i_{k'}'}$;
\item $x\in S_{i_1i_2\cdots i_k\frac{M^2+1}{2}}$ and
for any admissible square $S_{i_1'i_2'\cdots i_{k'}'}$ ($\subseteq S_{i_1i_2\cdots i_k\frac{M^2+1}{2}}$) of generation $n+1$,
$x\not\in S_{i_1'i_2'\cdots i_{k'}'}$.
\end{itemize}
Let
\begin{align*}
K_{i_1i_2\cdots i_k}:=\cup\{&S_{i_1'i_2'\cdots i_{k'}'}:
{\rm S_{i_1'i_2'\cdots i_{k'}'}\ (\subseteq S_{i_1i_2\cdots i_k\frac{M^2+1}{2}})\ is}\\
&{\rm an\ admissible\ square\ of\ generation\ n+1}\}
\end{align*}
and
\begin{align*}
FK_{i_1i_2\cdots i_k}:=\cup\{&F_{i_1'i_2'\cdots i_{k'}'}:
{\rm S_{i_1'i_2'\cdots i_{k'}'}\ (\subseteq S_{i_1i_2\cdots i_k\frac{M^2+1}{2}})\ is}\\
&{\rm an\ admissible\ square\ of\ generation\ n+1}\}.
\end{align*}
If $S_{i_1'i_2'\cdots i_{k'}'}$ ($\subseteq S_{i_1i_2\cdots i_k\frac{M^2+1}{2}}$) is an admissible square
of generation $n+1$, by definition of $F_{i_1'i_2'\cdots i_{k'}'}$, we have that
the distance between $F_{i_1'i_2'\cdots i_{k'}'}$ and $S_{i_1'i_2'\cdots i_{k'}'}$ is less than or equal to
$\sqrt{2}(c+1)\frac{l}{M^{k'}}$.
This implies
$$S_{i_1'i_2'\cdots i_{k'}'}\subseteq EF_{i_1'i_2'\cdots i_{k'}'},$$
where $EF_{i_1'i_2'\cdots i_{k'}'}$ is a closed ball with radius $(\frac{\sqrt{2}}{2}+\sqrt{2}(c+2)M)\cdot\frac{l}{M^{k'+1}}$
having the same center as $F_{i_1'i_2'\cdots i_{k'}'}$.
Thus
$${\rm area}(FK_{i_1i_2\cdots i_k})\geq\frac{1}{\pi(\frac{\sqrt{2}}{2}+\sqrt{2}(c+2)M)^2}\cdot{\rm area}(K_{i_1i_2\cdots i_k}).$$
Then by the fact (*), we have
\begin{align*}
{\rm area}(F^n\cap S_{i_1i_2\cdots i_k})&
\geq\max\left\{{\rm area}(FK_{i_1i_2\cdots i_k}),\ {\rm area}(S_{i_1i_2\cdots i_k\frac{M^2+1}{2}}\setminus K_{i_1i_2\cdots i_k})\right\}\\
&\geq\frac{1}{\pi(\frac{\sqrt{2}}{2}+\sqrt{2}(c+2)M)^2}
\max\left\{{\rm area}(K_{i_1i_2\cdots i_k}),\ {\rm area}(S_{i_1i_2\cdots i_k\frac{M^2+1}{2}}\setminus K_{i_1i_2\cdots i_k})\right\}\\
&\geq\frac{1}{2\pi M^2(\frac{\sqrt{2}}{2}+\sqrt{2}(c+2)M)^2}{\rm area}(S_{i_1i_2\cdots i_k})\\
&=\zeta\cdot{\rm area}(S_{i_1i_2\cdots i_k}).
\end{align*}
\end{proof}

\section{Appendix B}
We recall that a compact set $\Lambda\subset\mathbb{C}$ is shallow if and only if for any $z\in\Lambda$ and $r<1$,
there is a ball $B$ disjoint from $\Lambda$ with ${\rm diam}(B)\asymp r$ and $d(z,B)=\mathcal{O}(r)$.
In this appendix we prove the following proposition:

\begin{proposition}
For any unbounded type irrational number $0<\alpha<1$, $J(F_{\alpha})$ is not shallow.
\end{proposition}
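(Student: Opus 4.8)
The plan is to contradict the definition of a shallow set at infinitely many scales: given $M>1$, I want to produce a point $z\in J(F_\alpha)$ and a radius $0<\varrho<1$ such that every ball $B$ with $B\cap J(F_\alpha)=\emptyset$ and $d(z,B)\le M\varrho$ has ${\rm diam}(B)<\varrho/M$. First I would record the geometry of the complement: $\mathbb{C}\setminus J(F_\alpha)$ is the disjoint union of the basin of $\infty$ and the interior of $K(F_\alpha)$, which equals $\bigcup_{m\ge0}F_\alpha^{-m}(\mathbb{D})$; since $F_\alpha^{-1}(\mathbb{D})=\mathbb{D}\cup U_0$ with $U_0\subset\mathbb{C}\setminus\overline{\mathbb{D}}$, each component of this union other than $\mathbb{D}$ lies in $\mathbb{C}\setminus\overline{\mathbb{D}}$ and is therefore a holomorphic pullback $f_\alpha^{-m}(U_0)$; in particular only finitely many of these components exceed any given size, and $\mathbb{S}^1\cup\partial U_0\subset J(F_\alpha)$.

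The source of non-shallowness is the unboundedness of the type. I would fix $n_k\to\infty$ with $a:=a_{n_k+1}\to\infty$ and put $n:=n_k$. By the \'Swiatek--Herman real a priori bounds --- concretely property (\#2), which realises the $n$-cell of $\Gamma$ as a polygon whose bottom is a chain of $\asymp a$ edges, each comparable to the (single) top edge and to the sides --- the top edge, of length $\asymp{\rm diam}(P^{n})$, is comparable to the union of the $\asymp a$ bottom edges, of length $\asymp{\rm diam}(P^{n+1})$ each; hence ${\rm diam}(P^{n})\asymp a\cdot{\rm diam}(P^{n+1})$, and $P^{n}$ contains a chain $\Pi_0=\varphi_S(P^{n}),\Pi_1,\dots,\Pi_{a-1}\in\mathcal{P}$ of puzzle sub-pieces of mutually comparable diameter $\asymp{\rm diam}(P^{n})/a$ (corresponding to the bottom edges of the cell), consecutive ones meeting only at a preimage of $1$, with no puzzle piece of intermediate size between ${\rm diam}(P^{n})/a$ and ${\rm diam}(P^{n})$. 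Each $\Pi_j$ is carried onto $P^{n}$, or onto $P^{n-1}$, by a branch of an iterate of $F_\alpha^{-1}$ that is univalent off the critical orbit, so the estimates (a)--(e) and Claim~2 transport to every $\Pi_j$ by a bounded-distortion change of coordinates.

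With this picture in place, the plan is to pick $z=z_k\in J(F_\alpha)$ inside the chain $\bigcup_{j}\Pi_j$ but separated from $\mathbb{S}^1$, from $\partial U_0$, and from $\partial P^{n}$ by a distance $\asymp{\rm diam}(P^{n})$ --- using that the $\asymp a$ intermediate pieces push all three of these sets away from the centre of the chain --- and to take $\varrho:=\varrho_k={\rm diam}(P^{n})/\sqrt a$, an intermediate scale with ${\rm diam}(\Pi_j)\asymp\varrho/\sqrt a\ll\varrho\ll{\rm diam}(P^{n})$. Then, for $k$ large, any ball $B$ disjoint from $J(F_\alpha)$ with $d(z,B)\le M\varrho$ cannot meet the basin of $\infty$ (which is at distance $\gtrsim1$ from $z$, since $z$ lies within ${\rm diam}(P^{n})$ of $1$, deep in the critical nest), and if it lies in the interior of $K(F_\alpha)$ then the component containing it is either one of $\mathbb{D},U_0$, or one of the finitely many others of diameter $\gtrsim{\rm diam}(P^{n})$ --- all at distance $\gg M\varrho$ from $z$ --- or one of the holomorphic pullbacks of $U_0$ abutting the $\Pi_j$, of diameter $\asymp{\rm diam}(P^{n})/a=\varrho/\sqrt a$. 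Hence ${\rm diam}(B)\lesssim\varrho/\sqrt a<\varrho/M$ as soon as $\sqrt a>M$, and $J(F_\alpha)$ is not shallow.

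The hard part will be the separation estimate of the third paragraph. The naive choices --- $z$ on $\mathbb{S}^1$, on a preimage of $1$, or on the boundary of a sub-piece $\Pi_j$ touching $\mathbb{S}^1$ --- all fail, because the Siegel disk $\mathbb{D}$ then supplies a macroscopic gap at every scale; one must instead descend into the Cantor part of $J(F_\alpha)$ to a point genuinely surrounded by the chain $\bigcup_j\Pi_j$, and then prove, using the real a priori bounds together with (a)--(e) and Claim~2, that the deep pullbacks of $\mathbb{D}$, of $U_0$, and the tentacles of the basin of $\infty$ that come within $M\varrho_k$ of $z_k$ all have diameter $\asymp{\rm diam}(P^{n})/a$; equivalently, that the $\asymp a$ comparable intermediate pieces really do insulate $z_k$ from every macroscopic complementary component at the scale $\varrho_k$. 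Establishing this shielding property --- where the unboundedness $a=a_{n_k+1}\to\infty$ is essential --- is the crux of the argument.
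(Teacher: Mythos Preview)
Your outline has a concrete error that undermines the argument, and it is not the difficulty you flag at the end. In the third paragraph you assert that a ball $B$ disjoint from $J(F_\alpha)$ with $d(z,B)\le M\varrho$ ``cannot meet the basin of $\infty$ (which is at distance $\gtrsim 1$ from $z$, since $z$ lies within ${\rm diam}(P^n)$ of $1$)''. This is false: $1\in J(F_\alpha)=\partial(\text{basin of }\infty)$, so the basin of $\infty$ accumulates on $1$ and on every point of $J(F_\alpha)$ near $1$. Tentacles of the basin enter the critical nest at every scale; you cannot dismiss them by distance. You half-retract this in the last paragraph by mentioning ``tentacles of the basin of $\infty$'', but you offer no mechanism to bound their thickness, and nothing in (a)--(e) or Claim~2 does so.

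The paper attacks this differently. First it proves, using Proposition~2.1 and the hyperbolic contraction of Lemma~6.1, that every escaping point $z\in S_{l_n}$ satisfies $d_{\mathbb C\setminus\overline{\mathbb D}}(z,J(F_\alpha))\preccurlyeq\eta^{-k_n}$ with $k_n\asymp n$; this says the basin of $\infty$ is hyperbolically \emph{thin} near $1$, which rules out balls $B$ of bounded hyperbolic size there. Second, rather than your puzzle chain and intermediate scale $\varrho={\rm diam}(P^n)/\sqrt a$, the paper invokes Yampolsky's near-parabolic Fatou coordinate $\Psi$, which conjugates the return map to $z\mapsto z+1$ on a strip of width $\asymp a_{n_j}$; a test ball placed at height $\asymp a_{n_j}$ in this strip has fixed modulus relative to $\overline{\mathbb D}$, and any Fatou component meeting it (a preimage of $U_0$) is disjoint from all its iterates, hence has $\Psi$-image of diameter $\le 1$, forcing the modulus of the complementary annulus to blow up --- a contradiction. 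Note also a smaller conflation in your second paragraph: property~(\#2) concerns the $n$-cells of the graph $\Gamma\subset\mathbb H$ used to build the David extension, not the dynamical puzzle pieces $P^n\subset\mathbb C$; they are related through the circle dynamics but are not the same objects.
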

\begin{proof}
We prove this proposition by contradiction. We suppose that $J(F_{\alpha})$ is shallow.
Thus for any $z\in J(F_{\alpha})$ and $r<1$,
there is a ball $B$ disjoint from $J(F_{\alpha})$ with ${\rm diam}(B)\asymp r$ and $d(z,B)=\mathcal{O}(r)$.

By Proposition \ref{pr2.1},
there exists a sequence $\{z_n\}_{n=1}^{+\infty}\subset\overline{U_0}$ with
a quasi-log-arithmetic sequence $\left\{l_n:=|z_n-1|\right\}_{n=1}^{\infty}$ such that
for any $n>1$,
if $z\in S_{l_n}$ with $\Lambda_{\alpha_0}(z)=+\infty$, then there exists a nonnegative integer $m_{n,k}$ ($1\leq k\leq n-1$) such that
\begin{equation}
\label{a10.2}d_{\mathbb{C}\setminus\overline{\mathbb{D}}}(F_{\alpha}^{\comp m_{n,k}}(z),z_{n-k})\preccurlyeq1.
\end{equation}
Let
$$M_n:=\{m_{n,1},m_{n,2},\cdots,m_{n,n-1}\}=\{\omega_0^n,\omega_1^n,\omega_2^n,\cdots,\omega_{k_n}^n\}$$
with $\omega_0^n<\omega_1^n<\cdots<\omega_{k_n}^n$. By (\ref{a10.2}), for large enough $n$ we have
\begin{equation}
\label{a10.1}k_n\asymp n.
\end{equation}
By Lemma \ref{l6}, for any $1\leq t\leq n$ we have
$$\eta\cdot\rho_{\mathbb{C}\setminus\overline{\mathbb{D}}}(F_{\alpha}^{\comp \omega_{t-1}^n}(z))\leq
\rho_{\mathbb{C}\setminus\overline{\mathbb{D}}}(F_{\alpha}^{\comp \omega_t^n}(z))\cdot|(F_{\alpha}^{\comp(\omega_t^n-\omega_{t-1}^n)})'(z)|,$$
where $\eta>1$ is a universal constant.
Thus
$$\eta^{k_n}\cdot\rho_{\mathbb{C}\setminus\overline{\mathbb{D}}}(z)\leq
\eta^{k_n}\cdot\rho_{\mathbb{C}\setminus\overline{\mathbb{D}}}(F_{\alpha}^{\comp\omega_0^n}(z))\cdot|(F_{\alpha}^{\comp\omega_0^n})'(z)|\leq
\rho_{\mathbb{C}\setminus\overline{\mathbb{D}}}(F_{\alpha}^{\comp\omega_{k_n}^n}(z))\cdot|(F_{\alpha}^{\comp\omega_{k_n}^n})'(z)|$$
and
$$d_{\mathbb{C}\setminus\overline{\mathbb{D}}}(F_{\alpha}^{\comp\omega_{k_n}^n}(z),J(F_{\alpha}))\preccurlyeq1.$$
Let $L_n$ be a curve connecting $F_{\alpha}^{\comp\omega_{k_n}^n}(z)$ and $J(F_{\alpha})$
with $l_{\mathbb{C}\setminus\overline{\mathbb{D}}}(L_n)\preccurlyeq1$
and let $L_n^{-\omega_{k_n}^n}$ be the lift of $L_n$ under $F_{\alpha}^{\comp\omega_{k_n}^n}$
along $F_{\alpha}^{\comp\omega_{k_n}^n}(z), F_{\alpha}^{\comp(\omega_{k_n}^n-1)}(z), \cdots, F_{\alpha}(z), z.$
By Lemma \ref{l6},
for any $w\in L_n^{-\omega_{k_n}^n}$,
$$\eta^{k_n}\cdot\rho_{\mathbb{C}\setminus\overline{\mathbb{D}}}(w)\preccurlyeq
\rho_{\mathbb{C}\setminus\overline{\mathbb{D}}}(F_{\alpha}^{\comp\omega_{k_n}^n}(w))\cdot|(F_{\alpha}^{\comp\omega_{k_n}^n})'(w)|.$$
This implies that
\begin{align*}
d_{\mathbb{C}\setminus\overline{\mathbb{D}}}(z,J(F_{\alpha}))
&\leq
\int_{L_n^{-\omega_{k_n}^n}}\rho_{\mathbb{C}\setminus\overline{\mathbb{D}}}(w)|{\rm d}w|\\
&\preccurlyeq
\eta^{-k_n}\int_{L_n^{-\omega_{k_n}^n}}
\rho_{\mathbb{C}\setminus\overline{\mathbb{D}}}(F_{\alpha}^{\comp\omega_{k_n}^n}(w))\cdot|(F_{\alpha}^{\comp\omega_{k_n}^n})'(w)||{\rm d}w|\\
&=
\eta^{-k_n}\int_{L_n}
\rho_{\mathbb{C}\setminus\overline{\mathbb{D}}}(w)|{\rm d}w|\\
&\preccurlyeq\eta^{-k_n},
\end{align*}
that is
\begin{align}
\label{e10.1}d_{\mathbb{C}\setminus\overline{\mathbb{D}}}(z,J(F_{\alpha}))
\preccurlyeq\eta^{-k_n}.
\end{align}
Let $\alpha=[a_1,a_2,\cdots]$ be the continued fraction expansion of $\alpha$.
Since $\alpha$ is unbounded, we have that there exists a subsequence $\{a_{n_j}\}_{j=1}^{+\infty}$ such that
$\lim_{j\to+\infty}a_{n_j}=+\infty$.

Following [Lemma 5.5, \cite{Ya}], we have that
for large enough $j$, the restriction
$f_{\alpha}^{\comp q_{n_j-1}}|_{[x_{-q_{n_j-2}},1]_{\mathbb{S}^1}}$ is a small perturbation of a parabolic map with the fixed point index $2$ and hence
there exists a univalent map $\Psi$ from a neighborhood
$W_j\supset[x_{-q_{n_j-2}+q_{n_j}-sq_{n_j-1}},x_{sq_{n_j-1}}]_{\mathbb{S}^1}$
with $\lim_{j\to\infty}\sup_{z\in W_j}d(z,1)=0$
to a vertical strip
$\mathcal{D}_{n_j}=\{0<{\rm Re}(z)<a_{n_j}-2s\},$
mapping $W\cap\mathbb{S}^1$ into $\mathbb{R}$ and conjugating $f_{\alpha}^{\comp-q_{n_j-1}}$ to the unit translation $z\mapsto z+1$.
Moreover, the constant $s$ can be chosen independent of $f_{\alpha}$.

We define
$$E_j^{(1)}:=\left\{z=x+iy:\left|x-\frac{a_{n_j}-2s}{2}\right|<r_j,\ \left|y\right|<r_j\right\},$$
$$E_j^{(2)}:=\left\{z=x+iy:\left|x-\frac{a_{n_j}-2s}{2}\right|<2r_j,\ \left|y\right|<2r_j\right\},$$
and
$$R_j:=E_j^{(2)}\setminus\overline{E_j^{(1)}}$$
with $r_j=\frac{a_{n_j}-2s}{8}$.
It is easy to see that ${\rm Mod}(R_j)$ is a positive constant, denoted by $M$.
Set
$$z_j:=\frac{a_{n_j}-2s}{2}+r_ji\ {\rm and}\ w_j:=\Psi^{-1}(z_j).$$
We choose an Euclidean ball $\mathbb{B}(w_j,t_j)$ contained in $\mathbb{C}\setminus\overline{\mathbb{D}}$
such that the conformal modulus satisfies
$${\rm Mod}(\hat{\mathbb{C}}\setminus(\overline{\mathbb{D}}\cup\overline{\mathbb{B}(w_j,t_j)}))=\frac{2}{M}>\frac{1}{M}.$$
If $\mathbb{B}(w_j,t_j)\not\subseteq\Psi^{-1}(E_j^{(2)})$,
then $\hat{\mathbb{C}}\setminus(\overline{\mathbb{D}}\cup\overline{\mathbb{B}(w_j,t_j)})$
crosses $\Psi^{-1}(R_j)$ and hence
$${\rm Mod}(\hat{\mathbb{C}}\setminus(\overline{\mathbb{D}}\cup\overline{\mathbb{B}(w_j,t_j)}))\leq\frac{1}{{\rm Mod}(R_j)}=\frac{1}{M},$$
that is a contradiction. Thus $\mathbb{B}(w_j,t_j)\subseteq\Psi^{-1}(E_j^{(2)})$.

We firstly consider the case that there exists $a\in\mathbb{B}(w_j,t_j/4)\cap J(F_{\alpha})$.
In this case, the shallowness of $J(F_{\alpha})$ implies that
there is a ball $B\subseteq\mathbb{B}(w_j,t_j/2)$ disjoint from $J(F_{\alpha})$ with ${\rm diam}(B)\asymp t_j$.
Thus
\begin{equation}
\label{e10.2}{\rm Mod}(\hat{\mathbb{C}}\setminus(\overline{\mathbb{D}}\cup\overline{B}))\preccurlyeq1\ {\rm and}\
\frac{1}{{\rm diam}_{\mathbb{C}\setminus\overline{D}}(B)}\preccurlyeq1.
\end{equation}
Observe that
$B\subseteq\mathbb{C}\setminus K(F_{\alpha})$ or $B$ is contained in some component of $\cup_{n=0}^{\infty}F_{\alpha}^{\comp-n}(U_0)$,
where $K(F_{\alpha})$ consists of all points whose orbits are bounded.
By (\ref{a10.1}), (\ref{e10.1}) and (\ref{e10.2}), for large enough $j$, $B\subseteq\mathbb{C}\setminus K(F_{\alpha})$ doesn't hold.
Thus $B$ is contained in some component of $\cup_{n=0}^{\infty}F_{\alpha}^{\comp-n}(U_0)$.
This implies that for any $n\geq1$, $F_{\alpha}^{\comp n}(B)\cap B=\emptyset$.
Since $B\subseteq\mathbb{B}(w_j,t_j/2)$ and ${\rm diam}(B)\asymp t_j$,
by the Koebe distortion theorem we have that
$\Psi(B)$ contains a ball $\tilde{B}$ with ${\rm diam}(\tilde{B})\asymp{\rm diam}(\Psi(B))\asymp d(z_j,\Psi(B))$.
Since for any $n\geq1$, $F_{\alpha}^{\comp n}(B)\cap B=\emptyset$, we have ${\rm diam}(\tilde{B})\leq1$.
Then ${\rm diam}(\Psi(B))\asymp d(z_j,\Psi(B))\asymp1$ and hence for large enough $j$,
$$\lim_{j\to\infty}{\rm Mod}(\mathcal{D}_{n_j}\cap\mathbb{H}\setminus\Psi(B))=+\infty.$$
Pulling back by $\Psi$, we have
$$\lim_{j\to\infty}{\rm Mod}(\hat{\mathbb{C}}\setminus(\overline{\mathbb{D}}\cup\overline{B}))=+\infty,$$
which contradicts (\ref{e10.2}).
For the case that $\mathbb{B}(w_j,t_j/4)\cap J(F_{\alpha})=\emptyset$,
we can also get a contradiction in the same way.
Thus $J(F_{\alpha})$ is not shallow.
\end{proof}

\end{document}